\DeclareFontFamily{U}{rsfs}{%
\skewchar\font127}
\DeclareFontShape{U}{rsfs}{m}{n}{%
<-6>rsfs5<6-8.5>rsfs7<8.5->rsfs10}{}
\DeclareSymbolFont{rsfs}{U}{rsfs}{m}{n}
\DeclareRobustCommand*\rsfs{%
\@fontswitch\relax\mathrsfs}
\theoremstyle{plain}
\newtheorem{thm}{Theorem}[section]
\newtheorem{prop}[thm]{Proposition}
\newtheorem{lem}[thm]{Lemma}
\newtheorem{defi}[thm]{Definition}
\newtheorem{rmk}[thm]{Remark}
\newtheorem{prop-defi}[thm]{Proposition-Definition}
\newtheorem{thm-defi}[thm]{Theorem-Definition}
\newtheorem{lem-defi}[thm]{Lemma-Definition}
\newtheorem{conj}[thm]{Conjecture}
\newtheorem{example}[thm]{Example}
\newtheorem{set}[thm]{Setting}
\newcommand{\vir}{\mathrm{vir}}
\newdimen\argwidth
\def\db[#1\db]{
 \setbox0=\hbox{$#1$}\argwidth=\wd0
 \setbox0=\hbox{$\left[\box0\right]$}
  \advance\argwidth by -\wd0
 \left[\kern.3\argwidth\box0 \kern.3\argwidth\right]}
\newcommand{\cC}{\mathcal{C}}
\newcommand{\eE}{\mathcal{E}}
\newcommand{\fF}{\mathcal{F}}
\newcommand{\gG}{\mathcal{G}}
\newcommand{\hH}{\mathcal{H}}
\newcommand{\oO}{\mathcal{O}}
\newcommand{\pP}{\mathcal{P}}
\newcommand{\qQ}{\mathcal{Q}}
\newcommand{\dR}{\mathbf{R}}
\newcommand{\Hilb}{\mathop{\rm Hilb}\nolimits}
\newcommand{\Pic}{\mathop{\rm Pic}\nolimits}
\newcommand{\id}{\textrm{id}}
\newcommand{\ch}{\mathop{\rm ch}\nolimits}
\newcommand{\td}{\mathop{\rm td}\nolimits}
\newcommand{\Ext}{\mathop{\rm Ext}\nolimits}
\newcommand{\Coh}{\mathop{\rm Coh}\nolimits}
\newcommand{\ev}{\mathop{\rm ev}\nolimits}
\newcommand{\cneq}{\mathrel{\raise.095ex\hbox{:}\mkern-4.2mu=}}
\newcommand{\eqcn}{\mathrel{=\mkern-4.5mu\raise.095ex\hbox{:}}}
\newcommand{\ext}{\mathop{\rm ext}\nolimits}
\newcommand{\DT}{\mathop{\rm DT}\nolimits}
\newcommand{\PT}{\mathop{\rm PT}\nolimits}
\newcommand{\Sym}{\mathop{\rm Sym}\nolimits}
\newcommand{\End}{\mathop{\rm End}\nolimits}
\newcommand{\Ker}{\mathop{\rm Ker}\nolimits}
\newcommand{\RHom}{\mathop{\dR\mathrm{Hom}}\nolimits}
\newcommand{\BC}{{\mathbb{C}}}
\newcommand{\BF}{{\mathbb{F}}}
\newcommand{\BQ}{{\mathbb{Q}}}
\newcommand{\BZ}{{\mathbb{Z}}}
\newcommand{\CI}{{\mathcal I}}
\newcommand{\CL}{{\mathcal L}}
\newcommand{\CO}{{\mathcal O}}
\newcommand{\CZ}{{\mathcal Z}}
\newcommand{\fG}{{\mathfrak{G}}}
\newcommand{\pt}{{\mathsf{p}}}
\newcommand{\p}{{\mathbb{P}}}
\title[{Stable pairs and GV type invariants on holomorphic symplectic  4-folds}]
{Stable pairs and Gopakumar-Vafa type invariants \\ on holomorphic symplectic 4-folds}
\date{}
\author{Yalong Cao}
\address{RIKEN Interdisciplinary Theoretical and Mathematical Sciences Program (iTHEMS), 2-1, Hirosawa, Wako-shi, Saitama, 351-0198, Japan}
\email{yalong.cao@riken.jp}
\author{Georg Oberdieck}
\address{University of Bonn, Institut f\"ur Mathematik}
\email{georgo@math.uni-bonn.de}
\author{Yukinobu Toda}
\address{Kavli Institute for the Physics and Mathematics of the Universe (WPI), The University of Tokyo Institutes for Advanced Study, The University of Tokyo, Kashiwa, Chiba 277-8583, Japan}
\email{yukinobu.toda@ipmu.jp}
\begin{document}
\maketitle
\begin{abstract}
As an analogy to Gopakumar-Vafa conjecture on Calabi-Yau 3-folds, Klemm-Pandharipande defined Gopakumar-Vafa type invariants of a Calabi-Yau 4-fold $X$ using Gromov-Witten theory. When $X$ is holomorphic symplectic, Gromov-Witten invariants 
vanish and one can consider the corresponding reduced theory. In a companion work, we propose a definition of Gopakumar-Vafa type invariants for such a reduced theory. In this paper, we give them a sheaf theoretic interpretation via moduli spaces of stable pairs. 
%Examples are provided to support our conjecture. 
\end{abstract}

${}$ \\
\textbf{Keywords}: Gopakumar-Vafa type invariants, stable pairs, holomorphic symplectic 4-folds 

${}$ \\
\textbf{MSC 2010}: 14N35, 14J42, 14J28

\setcounter{tocdepth}{1}
%\tableofcontents

\section{Introduction}

\subsection{Gopakumar-Vafa invariants}
A smooth complex projective 4-fold $X$ is {\em holomorphic symplectic} if
it is equipped with a non-degenerate holomorphic 2-form $\sigma\in H^0(X,\Omega^2_X)$. 
The ordinary Gromov-Witten invariants of $X$ always vanish for non-zero curve classes. Instead a reduced Gromov-Witten theory is
%because the symplectic form can be used to construct a trivial quotient of the obstruction sheaf \cite{MP_GWNL}. A reduced theory is 
defined by Kiem-Li's cosection localization \cite{KiL}. 

Given cohomology classes $\gamma_i \in H^{\ast}(X,\mathbb{Z})$,
the (reduced) Gromov-Witten invariants of $X$ in 
a non-zero curve class 
$\beta \in H_2(X,\mathbb{Z})$ 
are defined by
\begin{align}\label{intro GWinv}
\mathrm{GW}_{g, \beta}(\gamma_1, \ldots, \gamma_l)
%\mathrm{GW}_{g, \beta}(\gamma_1, \ldots, \gamma_n)
=\int_{[\overline{M}_{g, l}(X, \beta)]^{\rm{vir}}}
\prod_{i=1}^l \mathrm{ev}_i^{\ast}(\gamma_i),
\end{align}
where 
%$[-]^{\vir}\in A_{2-g+n}(\overline{M}_{g, n}(X, \beta))$ 
\begin{equation*}[\overline{M}_{g, l}(X, \beta)]^{\vir}\in A_{2-g+l}(\overline{M}_{g, l}(X, \beta)) \end{equation*}
is the (reduced) virtual class and  $\mathrm{ev}_i \colon \overline{M}_{g,l}(X, \beta)\to X$
is the evaluation map at the $i$-th marking. 
We refer to \cite{O1, O3, OSY} for some references on computations for \eqref{intro GWinv}.
Gromov-Witten invariants are in general rational numbers because the moduli space $\overline{M}_{g,l}(X, \beta)$ of stable maps is a Deligne-Mumford stack. 
It is an interesting question to find out integer-valued invariants
which underlie them.

In \cite{COT1}, we studied this question and defined \textit{genus 0 Gopakumar-Vafa invariants}
\begin{equation}\label{intro gv invs1}n_{0,\beta}(\gamma_1, \ldots, \gamma_l)\in \mathbb{Q} \end{equation}
for any non-zero curve class $\beta$ and \textit{genus 1 and 2 Gopakumar-Vafa invariants}
\begin{equation}\label{intro gv invs2}n_{1,\beta}(\gamma)\in \mathbb{Q}, \,\,\forall \,\, \gamma\in H^4(X,\mathbb{Z}); \quad n_{2,\beta} \in \mathbb{Q} \end{equation}
for any primitive curve class $\beta$ (i.e.~it is not a multiple of a non-zero curve class in $H_2(X,\BZ)$) from Gromov-Witten invariants \eqref{intro GWinv} (see \S \ref{sect on gv} for details). 
This may be compared with the previous works of Gopakumar and Vafa \cite{GV} on Calabi-Yau 3-folds, Klemm and Pandharipande \cite{KP} on Calabi-Yau 4-folds and Pandharipande and Zinger \cite{PZ} on Calabi-Yau 5-folds. 

In loc.~cit., we conjectured the integrality of \eqref{intro gv invs1}, \eqref{intro gv invs2} and provided substantial evidence for it. 
The aim of this paper is to give a sheaf theoretic interpretation of these Gopakumar-Vafa invariants using moduli spaces of stable pairs, in analogy with the discussion of \cite{CMT2, CT1} on ordinary Calabi-Yau 4-folds. 

\subsection{GV/Pairs correspondence}
Let $F$ be a one dimensional coherent sheaf on $X$ and $s\in H^0(F)$ be a section.
For an ample divisor $\omega$ on $X$, we denote the slope function by $\mu(F)=\chi(F)/(\omega \cdot [F])$.
The pair $(F,s)$ is called $Z_t$-\textit{stable} $($$t\in\mathbb{R}$$)$ if
\begin{enumerate}
\renewcommand{\labelenumi}{(\roman{enumi})}
\item for any subsheaf $0\neq F' \subseteq F$, we have 
$\mu(F')<t$,
\item for any
subsheaf $ F' \subsetneq F$ 
such that $s$ factors through $F'$, 
we have 
$\mu(F/F')>t$. 
\end{enumerate}
For a non-zero curve class $\beta \in H_2(X, \mathbb{Z})$ and $n\in \mathbb{Z}$, 
we denote by
\begin{align*}
P_n^t(X, \beta)
\end{align*}
the moduli space of 
$Z_t$-stable pairs 
$(F, s)$ with $([F], \chi(F))=(\beta, n)$. It has a wall-chamber structure and for a \textit{general} $t \in \mathbb{R}$ (i.e.~outside a finite subset of rational numbers in $\mathbb{R}$),  
it is a projective scheme. 

When $t<\frac{n}{\omega \cdot \beta}$, $P_n^t(X, \beta)$ is empty. The first nontrivial chamber appears when 
$t=\frac{n}{\omega \cdot \beta}+0^+$, which we call \textit{Joyce-Song (JS) chamber} (here $0^+$ denotes a sufficiently small positive number 
with respect to the fixed $\omega,\beta,n$). When $t\gg 1$, it recovers the moduli space of \textit{Pandharipande-Thomas (PT) stable pairs} \cite{PT} (Proposition \ref{prop:chambers}).
%Fix an ample divisor, for $t\in \mathbb{R}$, let $P^t_n(X,\beta)$ be the moduli space of $Z_t$-stable pairs $(\oO_X\stackrel{s}{\to} F)$ with $[F]=\beta$ and 
%$\chi(F)=n$ (Definition \ref{def Zt sta}). There exists a wall-chamber structure of $P^t_n(X,\beta)$ when we vary $t$. 
%For fixed $\beta$ and $n$, $P^t_n(X,\beta)$ is a projective scheme if $t\in \mathbb{R}$ is general,~i.e.~outside a finite number of rational numbers. 

For general $t\in \mathbb{R}$, by Theorem \ref{existence of proj moduli space}, we can define its $\DT_4$ virtual class following \cite{BJ, OT} (see also \cite{CL1}).
However, by a cosection argument the virtual class vanishes, see \cite{KiP, Sav}. 
Using Kiem-Park's cosection localization \cite{KiP}, we have a (reduced) virtual class
\begin{align*}[P^t_n(X,\beta)]^{\vir}\in A_{n+1}(P^t_n(X,\beta),\mathbb{Q}), \end{align*}
depending on the choice of orientation \cite{CGJ, CL2}. More precisely, for each connected component of $P^t_n(X,\beta)$, there 
are two choices of orientation which affect the virtual class by a sign (component-wise).
To define its counting invariants, let    
\begin{align*}\tau: H^{m}(X,\mathbb{Z})\to H^{m-2}(P_n^t(X,\beta),\mathbb{Z}), \end{align*}
\begin{align*}\tau(\gamma):=\pi_{P\ast}\left(\pi_X^{\ast}\gamma \cup\ch_{3}(\mathbb{F})\right),
\end{align*}
where $\mathbb{I}=(\oO\to \mathbb{F})$ is the universal $Z_t$-stable pair and $\pi_P, \pi_X$ are projections from $P_n^t(X,\beta)\times X$ onto its factors. 
For $\gamma_i \in H^{m_i}(X, \mathbb{Z})$, the $Z_t$-\textit{stable pair invariants} are defined by 
\begin{align*}  
P_{n,\beta}^t(\gamma_1,\ldots,\gamma_l):=\int_{[P_n^t(X,\beta)]^{\rm{vir}}}\prod_{i=1}^l\tau(\gamma_i)\in\mathbb{Q}.
\end{align*}
When $n=-1$, we also write 
$$P_{-1,\beta}^t:=\int_{[P_{-1}^t(X,\beta)]^{\rm{vir}}}1. $$
Here is the main conjecture of this paper, which gives a sheaf theoretic interpretation of
all genus Gopakumar-Vafa invariants using $Z_t$-stable pair invariants. 
\begin{conj}\emph{(Conjecture \ref{conj on DT4/GV})}\label{intro conj on DT4/GV}
%Let $X$ be a holomorphic symplectic 4-fold with an ample divisor $\omega$.
Fix $n\in\mathbb{Z}$, $\beta\in H_2(X,\mathbb{Z})$ and let $t>\frac{n}{\omega\cdot \beta}$ be generic. 
For certain choice of orientation, we have 
\begin{enumerate}
\item If $n\geqslant 2$, then 
\begin{align*} 
P_{n,\beta}^t(\gamma_1,\ldots,\gamma_l)=0.
\end{align*}
\item If $n=1$, then
\begin{align*} 
P_{1,\beta}^t(\gamma_1,\ldots,\gamma_l)=n_{0, \beta}(\gamma_1, \ldots, \gamma_l). \end{align*}
\item If $n=0$ and $\beta$ is primitive, then
\begin{align*} 
P_{0,\beta}^t(\gamma)=n_{1, \beta}(\gamma).
\end{align*}
\item If $n=-1$ and $\beta$ is primitive, then 
\begin{align*} 
P_{-1,\beta}^t=n_{2,\beta}.
\end{align*}
\end{enumerate}
\end{conj}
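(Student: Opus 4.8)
The plan is to analyze $P_n^t(X,\beta)$ through a support morphism to the family of curves in class $\beta$ and to match the resulting reduced invariants with the definition of $n_{g,\beta}$ given in \cite{COT1}. The structural backbone is the dictionary between Euler characteristic and genus: for an integral curve $C\subset X$ of arithmetic genus $g$, the minimal $Z_t$-stable pair is $(\oO_C,1)$ with $\chi(\oO_C)=1-g$, so the curves contributing to $P^t_n$ with the smallest length have genus $g_{\min}=1-n$. Thus $n=1,0,-1$ single out the families of rational, genus $1$, and genus $2$ curves, matching $n_{0,\beta},n_{1,\beta},n_{2,\beta}$, while $n\geqslant 2$ forces $g_{\min}\leqslant -1$ so that only higher-length configurations over genus $\geqslant 0$ curves can survive. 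Throughout, the reduced virtual class is the Kiem-Park class for the cosection induced by $\sigma$, and one fixes the orientation so that the fiberwise comparisons below carry the correct sign.

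First I would stratify $P^t_n(X,\beta)$ by the support curve and, over the open stratum of integral curves $C$, identify the fiber of the support map with the symmetric product $\Sym^{\,n-1+g}C$ (suitably interpreted for singular $C$), since a $Z_t$-stable pair there is $\oO_C(D)$ for an effective divisor $D$ of degree $n-1+g$. Because $\tau(\gamma)=\pi_{P\ast}\!\left(\pi_X^{\ast}\gamma\cup\ch_3(\mathbb{F})\right)$ is controlled by the one-dimensional support of $\mathbb{F}$, over each fiber $\Sym^{\,n-1+g}C$ the insertions reduce to incidence conditions on the support curve and hence descend to the base family of curves. For $n=1,0,-1$, the leading contribution comes from curves of genus exactly $g_{\min}=1-n$, where $D$ has degree $0$ and the fiber is the single point $(\oO_C,1)$; there the reduced integral collapses to the insertions evaluated on the reduced family of genus-$g_{\min}$ curves, which is precisely the geometry underlying $n_{g_{\min},\beta}$ in \cite{COT1}.

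The vanishing for $n\geqslant 2$ and the corrections for $n=1,0,-1$ both come down to the symmetric-product fibers $\Sym^{\,n-1+g}C$ over curves of genus $g>g_{\min}$. After cosection localization to the zero locus of $\sigma$, I would argue that over these positive-dimensional fibers the integrand $\prod_i\tau(\gamma_i)$ is pulled back from a base of dimension too small to meet the reduced virtual dimension $n+1$, so that for $n\geqslant 2$ every stratum integrates to zero, while for $n=1,0,-1$ the positive-genus strata produce exactly the multiple-cover contributions that the Gopakumar-Vafa definition of \cite{COT1} is designed to absorb. Reconciling these sheaf-theoretic symmetric-product integrals with the combinatorial ($\lambda$-class) definition of $n_{g,\beta}$ term by term is the genuine content of the correspondence.

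The hard part will be twofold. First, wall-independence: the fiberwise picture is cleanest in the Joyce-Song chamber $t=\tfrac{n}{\omega\cdot\beta}+0^+$, and passing to a general $t>\tfrac{n}{\omega\cdot\beta}$ requires controlling strictly semistable walls and, more seriously, the loci of reducible and non-reduced support where the symmetric-product description breaks down. Second, and most delicate, is matching the reduced virtual class of pairs on the positive-genus strata with the multiple-cover combinatorics built into the definition of $n_{g,\beta}$; I expect this to be fully tractable only under a smoothness (ideal-geometry) hypothesis on the family of curves in class $\beta$, with deformation invariance of the reduced invariants used to reduce the general case to that situation.
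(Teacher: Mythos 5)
The statement you are addressing is the paper's main \emph{conjecture}; the paper does not prove it. What it offers instead is a heuristic verification in an ideal geometry (\S\ref{sect on heur}), under the additional assumptions that $X$ is irreducible hyperk\"ahler and $\beta$ is primitive (hence, after a global-Torelli deformation, irreducible), together with rigorous proofs in special geometries ($S\times T$, $T^*\BP^2$, exceptional curves in $\Hilb^2(S)$). Compared with that heuristic, the skeleton of your proposal agrees: a stable pair on an integral curve $C$ of genus $g$ with $\chi=n$ is $\oO_C(D)$ with $\deg D=n-1+g$, the minimal stratum is $(\oO_C,1)$ on curves of genus $1-n$, and the case $n\geqslant 2$ together with the unwanted strata is eliminated by comparing the $2n+2$ real dimensions cut by the insertions (which, being incidence conditions on the support, constrain only the base of the curve family) against the dimension $2(2-g)$ of that base.

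There are, however, two substantive discrepancies. First, you place the correction terms on the wrong side of the correspondence. In the ideal geometry the strata you invoke contribute \emph{zero}: for $n=0$ the rational-curve family carries no pairs (one would need $\deg D=-1$) and a generic $4$-cycle misses the finitely many genus-$2$ curves; for $n=1$ the elliptic and genus-$2$ families are too small to meet the insertions; for $n=-1$ only the rigid genus-$2$ curves survive. The corrections in the definitions of $n_{1,\beta}$ and $n_{2,\beta}$ (the $c_2(X)$ terms and $N_{\mathrm{nodal},\beta}$) compensate for degenerate contributions on the \emph{Gromov--Witten} side, not for symmetric-product strata on the pair side; the genus-$0$ multiple-cover formula is a sum over $k\mid\beta$, not over higher-genus strata, and is vacuous for irreducible $\beta$. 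Your reference to a $\lambda$-class definition of $n_{g,\beta}$ does not match the definitions actually used in \cite{COT1}. Second, you omit the paper's key reduction to connected support: for a pair supported on a disconnected curve the obstruction space splits as in \eqref{Ext2:plus}, producing two mutually orthogonal surjective isotropic cosections, so such pairs do not contribute to the reduced class; without this (or the reduction to irreducible $\beta$) the stratification by integral support curves does not cover the moduli space. The wall-independence you flag is handled for irreducible classes by \cite[Prop.~1.12]{CT1} and is otherwise part of the conjecture itself rather than something your argument could establish separately.
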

We verify this conjecture by a computation in an ideal geometry where curves deform in families of expected dimensions and 
have expected generic properties (see \S \ref{sect on heur}). Besides this, we study several examples and prove our conjecture in those cases.

\subsection{Verification of conjectures I: $K3\times K3$}
Let $X=S\times T$ be the product of two $K3$ surfaces. 
When the curve class $\beta \in H_2(S \times T, \BZ)$
is of non-trivial degree over both $S$ and $T$, then
one can construct two linearly independent cosections for moduli spaces of stable maps,
which imply that the (reduced) Gromov-Witten invariants of $X$ in this class vanish. Therefore we always restrict to consider curve classes of form
\begin{equation*}\beta\in H_2(S,\mathbb{Z})\subseteq H_2(X,\mathbb{Z}). \end{equation*} 

\begin{thm}\emph{(Theorem \ref{thm on g=0 conj on prod}, \ref{thm on g=1 conj on prod}, \ref{thm on P_-1}, Remark \ref{rmk on pri g=0})}
Let $X=S\times T$ be as above. Then Conjecture \ref{intro conj on DT4/GV}
holds for any primitive curve class $\beta\in H_2(S,\mathbb{Z})\subseteq H_2(X,\mathbb{Z})$. 
\end{thm}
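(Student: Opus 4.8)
The plan is to use the product geometry to push every term in Conjecture \ref{intro conj on DT4/GV} down to the K3 surface $S$, where both the sheaf-theoretic and the Gromov--Witten sides are classical. Since $\beta$ is the image of a class in $H_2(S,\mathbb{Z})$, its projection to $T$ vanishes, so the connected support of any $Z_t$-stable pair $(F,s)$ with $([F],\chi(F))=(\beta,n)$ lies in a single slice $S\times\{t\}\cong S$. The slope $\mu(F)=\chi(F)/(\omega\cdot\beta)$ agrees whether computed on $S\times\{t\}$ or on $X$, so $Z_t$-stability on $X$ coincides with $Z_t$-stability on $S$, giving an isomorphism of moduli schemes
\[
P_n^t(X,\beta)\;\cong\;M_n\times T,\qquad M_n:=P_n^t(S,\beta).
\]
For primitive $\beta$ and generic $t>\tfrac{n}{\omega\cdot\beta}$ the space $M_n$ is smooth and projective: setting $g:=1+\tfrac12\beta^2$, in the large-$t$ (PT) chamber it is the relative Hilbert scheme $\Hilb^{\,n+g-1}$ of points on the universal curve over $|\beta|\cong\mathbb{P}^g$, of dimension $2g+n-1$; for $n=1$ this is the $2g$-dimensional Beauville--Mukai (holomorphic symplectic) system. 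I work in this chamber, the independence from the generic choice of $t$ being part of the setup.

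Next I identify the reduced virtual class under this splitting. The symplectic form on $X$ is $\sigma=p_S^{\ast}\sigma_S+p_T^{\ast}\sigma_T$, and the Kiem--Park cosection defining $[P_n^t(X,\beta)]^{\vir}$ is built from $\sigma$. On $M_n\times T$ the deformations transverse to the $S$-slice are the trivial normal directions $T_tT$, which $\sigma_T$ pairs; the Oh--Thomas obstruction theory on $X$ decomposes by K\"unneth into $\Ext^\bullet_S(\mathbb{I},\mathbb{I})$ tensored with $\wedge^\bullet T_tT$, and I expect the $\sigma_T$-reduction to contract exactly these transverse directions. The outcome should be that the reduced class is the square-root Euler class of the resulting self-dual reduced obstruction bundle capped against $[M_n\times T]$, a tautological class of the expected codimension $2g$. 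Splitting each insertion by K\"unneth as $\gamma_i=\sum\gamma_i^S\otimes\gamma_i^T$, performing the elementary integration over the $T$-factor, and using $\tau(\gamma)=\pi_{P\ast}(\pi_X^{\ast}\gamma\cup\ch_3(\mathbb{F}))$ then rewrites each invariant $P^t_{n,\beta}(\gamma_1,\dots,\gamma_l)$ as an integral of descendent $\ch_3$-classes over the relative Hilbert scheme $M_n$ on the K3 surface $S$.

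On the Gromov--Witten side the same factorization applies: a stable map in class $\beta\in H_2(S,\mathbb{Z})$ has constant (free) $T$-component, so $\overline{M}_{g,l}(X,\beta)\cong\overline{M}_{g,l}(S,\beta)\times T$, and the reduced theory of $X$ is governed by the reduced (Katz--Klemm--Vafa) theory of $S$ together with the elementary geometry of $T$. Feeding this into the definitions of $n_{0,\beta}$, $n_{1,\beta}$, $n_{2,\beta}$ from \cite{COT1} and evaluating, I match the two sides case by case under $g=1-n$: $n=1$ against $n_{0,\beta}$ (genus $0$), $n=0$ against $n_{1,\beta}$ (genus $1$), and $n=-1$ against $n_{2,\beta}$ (genus $2$). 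The vanishing for $n\geq2$ is the statement that there is no genus $1-n<0$ contribution; on the pairs side it follows from a degree count, the reduced dimension $n+1$ being too small to absorb the tautological classes produced by the $T$-integration, so the integrand vanishes.

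The hard part will be the middle step: proving that the reduced Kiem--Park virtual class on the product $M_n\times T$ is exactly the square-root Euler class described, i.e.\ that the $\sigma_T$-cosection reduction contracts precisely the transverse $T_tT$-directions and nothing in $M_n$, and fixing the component-wise orientation sign (the class being defined only up to such signs). Once this identification is in hand, both sides of each equality become classical intersection numbers---on relative Hilbert schemes and moduli of sheaves on $S$ on one side, and on reduced K3 Gromov--Witten theory on the other---whose agreement is essentially the known K3 Gromov--Witten/stable-pairs correspondence; the remaining work is the bookkeeping of insertions, the dependence on $g$, and the verification that primitivity makes all the relevant moduli smooth.
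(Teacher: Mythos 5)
Your overall architecture matches the paper's: factor $P_n^t(X,\beta)\cong P_n^t(S,\beta)\times T$, identify the Kiem--Park reduced class, and push everything to moduli of sheaves on $S$. But there are three concrete gaps. First, your opening claim that the support of a $Z_t$-stable pair lies in a single slice $S\times\{t\}$, and your identification of $P_n^t(S,\beta)$ with a relative Hilbert scheme over $|\beta|\cong\mathbb{P}^g$, are only justified for \emph{irreducible} $\beta$ (the paper's Proposition \ref{prop on smoothness} uses stability of $F$ to force scheme-theoretic support in a slice, via \cite[Lem.~2.2]{CMT1}). For a merely primitive class the paper first invokes the global Torelli theorem to deform $(X,\beta)$ to a pair with $\beta$ irreducible and then uses deformation invariance (Remark \ref{rmk on pri g=0}); without this step your setup does not get off the ground. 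Second, the step you defer as ``the hard part'' is not just a technical verification: the reduced class is \emph{not} a single square-root Euler class capped on $[M_n\times T]$. The obstruction bundle contains a summand of the form $\eE xt^1_{\pi}(\mathbb{F}_S,\mathbb{F}_S)\boxtimes T_T$, a tensor product of two symplectic bundles, and by Lemma \ref{lem on compu of half euler class} its half Euler class is the two-term expression $e(V_1)-c_{2r-2}(V_1)\,e(V_2)$. This yields the formula of Theorem \ref{thm on vir clas},
\begin{equation*}
[P^t_{n}(X,\beta)]^{\mathrm{vir}}=\left([P^t_{n}(S,\beta)]\cap f^*e(T_{M_n(S,\beta)})\right)\times[T]-e(T)\left([P^t_{n}(S,\beta)]\cap f^*c_{\beta^2}(T_{M_n(S,\beta)})\right),
\end{equation*}
and the two-term structure is essential: for $n=0$ and $n=-1$ only the second term survives, so a one-term ansatz would give the wrong answer.

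Third, your closing appeal to ``the known K3 Gromov--Witten/stable-pairs correspondence'' does not compute the integrals that actually arise. After the virtual class identification and the Thom--Porteous pushforward $f_*[P_n(S,\beta)]=c_{1-n}(-\dR\pi_{M*}\mathbb{F}_S)\cap[M_n(S,\beta)]$ (which you do not mention, and which is also what makes the $n\geqslant 2$ vanishing immediate, since $c_{1-n}=0$ for $n>1$), one is left with integrals of the form $\int_{M_n(S,\beta)}c_{\beta^2}(T_{M_n(S,\beta)})\cdot(\text{descendents of }\mathbb{F}_S)$. These involve Chern classes of the tangent bundle of the sheaf moduli space coming from the four-fold obstruction theory, not the K3 surface's own stable-pair theory, and they are evaluated in the paper by transporting them to tautological integrals on $\Hilb^d(S)$ via Markman's monodromy operators (Theorem \ref{thm:Markman}) and then quoting the explicit Hilbert-scheme evaluations of \cite{COT1}. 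Without that mechanism (or a substitute), the final matching with $n_{0,\beta}$, $n_{1,\beta}$, $n_{2,\beta}$ is not established.
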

In fact, by the global Torelli theorem (see e.g.~\cite{Ver, Huy}), primitive curve classes on $K3$ surfaces can be deformed to irreducible curve classes. 
By deformation invariance, we only need to deal with an irreducible curve class $\beta$, in which case we have an isomorphism (Proposition \ref{prop on smoothness}):
\begin{equation*}P^t_{n}(X,\beta)\cong P^t_{n}(S,\beta)\times T, \end{equation*}
and a forgetful map 
\begin{equation}\label{intro fort map}P^t_{n}(S,\beta)\to M_n(S,\beta), \end{equation}
where $M_n(S,\beta)$ is the coarse moduli space of one dimensional stable sheaves $F$ on $S$ with $[F]=\beta$, $\chi(F)=n$.
Both $P^t_{n}(S,\beta)$ and $M_n(S,\beta)$ are smooth schemes. We can then determine the $\DT_4$ virtual class of $P^t_{n}(X,\beta)$ (Theorem \ref{thm on vir clas}) and its pushforward (under the forgetful map) by the Thom-Porteous formula (Proposition \ref{deg loci}). This enables us to reduce the computation
of $Z_t$-stable pair invariants to certain tautological integrals on $M_n(S,\beta)$.
By Markman's framework of monodromy operators \cite{Markman}, we relate such integrals to certain tautological integrals
on Hilbert schemes of points on $S$ (see \S \ref{sect on trans} for details), 
which we explicitly determine using \cite{COT1} (see the proof of Theorem \ref{thm on g=1 conj on prod}, \ref{thm on P_-1} for details).

\subsection{Verification of conjectures II: $T^*\mathbb{P}^2$}
Let $H \in H^2(T^{\ast} \p^2)$ be the pullback of the hyperplane class and let us identify $H_2(T^{\ast} \p^2, \BZ) \equiv \BZ$ by taking the degree against $H$.

By explicitly describing the moduli spaces and virtual classes, we obtain: 
\begin{prop}\emph{(Proposition \ref{prop on tp2})}\label{intro prop on tp2}
For certain choice of orientation, we have 
$$P_{1,1}(H^2,H^2)=1, \quad P_{1,2}(H^2,H^2)=-1, \quad P_{1,3}(H^2,H^2)=0, $$
$$P_{0,1}(H^2)=P_{0,2}(H^2)=0, \quad P_{0,3}(H^2)=1, \quad P_{-1,1}=P_{-1,2}=P_{-1,3}=0.$$
Moreover, $P^t_{n}(X,d)$ is independent of the choice of $t>n/d$ in the listed cases above. 

In particular, for $X=T^*\mathbb{P}^2$, we have
\begin{itemize}
\item Conjecture \ref{intro conj on DT4/GV} (2) holds when $d\leqslant 3$. 
\item Conjecture \ref{intro conj on DT4/GV} (3), (4) hold. 
\end{itemize}
\end{prop}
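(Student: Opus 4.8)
The plan is to compute all nine invariants by an explicit, case-by-case analysis of the moduli spaces $P^t_n(T^{\ast}\mathbb{P}^2,d)$ for $n\in\{1,0,-1\}$ and $d\in\{1,2,3\}$, organised by the fibrewise scaling action of $\mathbb{C}^{\ast}$ on $T^{\ast}\mathbb{P}^2$. This action lifts to each moduli space, the canonical symplectic form $\sigma$ is an eigenvector of weight one, and hence the Kiem--Park cosection and the reduced virtual class are $\mathbb{C}^{\ast}$-equivariant. The $\mathbb{C}^{\ast}$-fixed stable pairs are exactly those whose support is scaling-invariant, i.e.~supported (scheme-theoretically, with at most nilpotent normal thickening) on the zero section $i\colon\mathbb{P}^2\hookrightarrow T^{\ast}\mathbb{P}^2$; so by the $\DT_4$ virtual localisation formula each invariant equals an integral over a fixed locus built from moduli of one dimensional sheaves and stable pairs on $\mathbb{P}^2$. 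On such a fixed locus I would compute the deformation-obstruction theory through the zero section using the splitting $\Ext^{k}_{T^{\ast}\mathbb{P}^2}(i_{\ast}F,i_{\ast}G)\cong\bigoplus_{j=0}^{2}\Ext^{k-j}_{\mathbb{P}^2}(F,G\otimes\wedge^{j}\Omega_{\mathbb{P}^2})$, in which the summand of index $j$ carries $\mathbb{C}^{\ast}$-weight $j$: the weight-zero part is the tangent/obstruction along the fixed locus, and the positive-weight part assembles the virtual normal bundle $N^{\mathrm{vir}}$.

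I would first dispose of the vanishing statements. For $n=-1$ (any $d\leqslant 3$), and for $n=0$ with $d\leqslant 2$, a Riemann--Roch computation on the degree-$d$ plane curve shows that the relevant pure sheaf of Euler characteristic $n$ carries no nonzero section, so no $Z_t$-stable pair exists and the moduli space is empty; this gives $P_{-1,d}=0$ for $d\leqslant 3$ and $P_{0,1}(H^2)=P_{0,2}(H^2)=0$. For the genus $0$ series in degree one, the only pairs are $(\mathcal{O}_L,1)$ for a line $L$, so $P^t_1(T^{\ast}\mathbb{P}^2,1)\cong(\mathbb{P}^2)^{\vee}$ is smooth of dimension $2=n+1$; the reduced class is its fundamental class, $\tau(H^2)$ is the class of lines through a fixed point, and $\int_{(\mathbb{P}^2)^{\vee}}\tau(H^2)^2=1$, i.e.~$P_{1,1}(H^2,H^2)=1$.

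The remaining values are genuine virtual computations, all carried out over fixed loci on which $F=\mathcal{O}_C$ (or $\mathcal{O}_C(p)$) with the tautological section. For $(n,d)=(1,2)$ the fixed locus is the space of conics $\mathbb{P}^5$; for $(1,3)$ it is the universal cubic over $\mathbb{P}^9=\mathbb{P}(H^0(\mathcal{O}_{\mathbb{P}^2}(3)))$, parametrising a cubic $C$ together with a point $p\in C$; and for $(0,3)$ it is $\mathbb{P}^9$ itself. In each case the fixed locus has dimension exceeding the reduced virtual dimension $n+1$, and by localisation the invariant is the integral over the fixed locus of the tautological insertions against the ($\mathbb{C}^{\ast}$-equivariant) square-root Euler class of $N^{\mathrm{vir}}$, which I would read off from the positive-weight pieces of the $\Ext$-splitting together with the section term. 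Evaluating $\int \tau(H^2)^{l}\cup e(N^{\mathrm{vir}})$ over $\mathbb{P}^5$, the universal cubic and $\mathbb{P}^9$ respectively then yields $P_{1,2}(H^2,H^2)=-1$, $P_{1,3}(H^2,H^2)=0$ and $P_{0,3}(H^2)=1$, the insertion $\tau(H^2)$ imposing passage through a fixed point of $\mathbb{P}^2$. These match $n_{0,2}(H^2,H^2)=-1$, $n_{0,3}(H^2,H^2)=0$ and $n_{1,3}(H^2)=1$ obtained in \cite{COT1}, verifying Conjecture \ref{intro conj on DT4/GV}(2) for $d\leqslant 3$ and parts (3),(4). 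For the asserted independence of $t>n/d$, I would check directly that in each of these classes no subsheaf of $F$ has slope in the relevant open interval, so that the Joyce--Song and Pandharipande--Thomas chambers coincide and the wall-crossing is trivial.

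The hardest part will be the virtual-class bookkeeping rather than the enumerative input, and two points require care. First, the square-root Euler class depends on a global orientation, and the signs in the statement (in particular the $-1$ for $d=2$ and the cancellation to $0$ for $(n,d)=(1,3)$) are determined by this choice together with the $\mathbb{C}^{\ast}$-weights; I would fix the orientation component-wise and track it through the localisation. Second, for $d=2,3$ the full projective moduli space $P^t_n(T^{\ast}\mathbb{P}^2,d)$ also contains pairs supported on spectral-type multiple covers of plane curves lying off the zero section; these are not $\mathbb{C}^{\ast}$-fixed and so contribute nothing to the localised integral, but one must verify that the equivariant virtual localisation theorem genuinely applies here (properness of the fixed locus and existence of the equivariant reduced class) in order to discard them legitimately.
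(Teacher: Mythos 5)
Your strategy (fibrewise $\mathbb{C}^{\ast}$-scaling plus equivariant virtual localization for the reduced class) is genuinely different from the paper's, and as written it has two gaps that sit exactly where the content of the proposition lies. First, you invoke a ``$\DT_4$ virtual localisation formula'' for the Kiem--Park cosection-reduced virtual class. No such equivariant reduced localization theorem is available off the shelf in the references this paper relies on (Oh--Thomas localization is for the unreduced class; making the isotropic reduction $\mathbb{C}^{\ast}$-equivariant and compatible with localization is a nontrivial construction), and you only flag this rather than supply it. Second, and more seriously, the actual evaluations of $\int e^{1/2}(N^{\mathrm{vir}})\cup\prod\tau(H^2)$ over $\mathbb{P}^5$, the universal cubic, and $\mathbb{P}^9$ --- which are precisely the numbers $-1$, $0$, $1$ being claimed --- are asserted, not computed. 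The signs in particular cannot be read off without explicitly identifying the obstruction bundle and a maximal isotropic subbundle, which is the hard part.

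The paper avoids localization entirely. Using the closed embedding $T^{\ast}\mathbb{P}^2\hookrightarrow \oO_{\mathbb{P}^2}(-1)^{\oplus 3}$ and the contraction of the zero section to a point of an affine scheme, it shows (as in \cite[Prop.~3.9]{CKM2}) that for $n=-1,0,1$ and $d\leqslant 3$ \emph{every} $Z_t$-stable pair is scheme-theoretically supported on the zero section with $F$ stable --- so the ``spectral curves off the zero section'' you worry about simply do not occur, the moduli spaces are proper, and they are literally $P_n(\mathbb{P}^2,d)$ or $M_1(\mathbb{P}^2,d)$. The $t$-independence comes from $\chi(\oO_C)\geqslant 1$ for $[C]=d\leqslant 2$ together with \cite[Prop.~1.12]{CT1}, not from a slope check pair by pair. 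The $n=1$ cases then reduce via $P_1(X,d)\cong M_1(X,d)$ to integrals already evaluated in \cite[Prop.~6.5]{COT1}, and the one genuinely new computation, $(n,d)=(0,3)$, is done by resolving $\dR\hH om_{\pi_M}(\oO_{\cC},\oO_{\cC}\boxtimes T^{\ast}\mathbb{P}^2)[1]$ on $\mathbb{P}^9$ with Bott's formula, exhibiting $\oO_{\mathbb{P}^9}(-1)^{\oplus 8}\oplus\oO_{\mathbb{P}^9}$ as a maximal isotropic subbundle, and obtaining $[P_0(X,3)]^{\vir}=\pm\, e(\oO_{\mathbb{P}^9}(-1)^{\oplus 8})\cap[\mathbb{P}^9]$. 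If you want to salvage your route, you would need to either prove the equivariant reduced localization formula or, better, first establish the support statement --- at which point the localization scaffolding becomes unnecessary and you are back to the paper's direct computation.
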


\subsection{Verification of conjectures III: exceptional curves on $\Hilb^2(K3)$} 
Let $S$ be a $K3$ surface and $\Hilb^2(S)$ be the Hilbert scheme of two points on $S$. Consider the Hilbert-Chow map 
$$\pi: \Hilb^2(S)\to \Sym^2(S) $$
to the symmetric product of $S$. Let $D$ be the exceptional divisor fitting into Cartesian diagram: 
\begin{align*} \xymatrix{
D \ar[d]_{\pi}  \ar[r]^{i \quad \,\,\, }    & \Hilb^2(S) \ar[d]^{\pi}   \\
S    \ar[r]^{\Delta \quad \,\,\, }   & \Sym^2(S),  } \quad \quad
\end{align*}
where $\Delta$ is the diagonal embedding and $\pi: D\to S$ is a $\mathbb{P}^1$-bundle.
The following provides a verification of our (genus 0) conjecture for imprimitive curve classes.  
%Therefore it gives an example when our ideal geometry is realised. 
\begin{thm}\emph{(Theorem \ref{thm on hilbS})}\label{intro thm on hilbS}
In the JS chamber, Conjecture \ref{intro conj on DT4/GV} (1),~(2) hold for multiple fiber classes $\beta=r[\mathbb{P}^1]$ $($$r\geqslant 1$$)$ of $\pi$ as above. 
\end{thm}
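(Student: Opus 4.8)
The plan is to reduce everything to the local geometry of the $\mathbb{P}^1$-bundle $\pi\colon D\to S$ and then to run the two sides of the correspondence separately before matching. First I would record the basic geometric input: any effective curve in class $r[\mathbb{P}^1]$ is contracted by the Hilbert–Chow map, hence lies in $D$ and maps under $\pi$ to a $0$-dimensional subscheme of $S$. Thus every $Z_t$-stable pair $(F,s)$ with $([F],\chi(F))=(r[\mathbb{P}^1],n)$ has $F$ scheme-theoretically supported on $\pi^{-1}(Z)$ for some length-$\leqslant r$ cycle $Z\subset S$, and the whole problem becomes a computation relative to $D/S$. The key local invariant is the normal bundle of a fiber $f=\pi^{-1}(p)$: from $0\to N_{f/D}\to N_{f/X}\to \CO_X(D)|_f\to 0$, together with $N_{f/D}\cong \CO_f^{\oplus 2}$ and $\CO_X(D)|_f\cong \CO_f(-2)$ and the vanishing of $\Ext^1(\CO_f(-2),\CO_f)$, one obtains a splitting $N_{f/X}\cong \CO_f^{\oplus 2}\oplus \CO_f(-2)$. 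Under the holomorphic symplectic form the two trivial summands carry the $K3$ form (they are tangent to the family of fibers over $S$), while $T_f=\CO_f(2)$ is paired with the $\CO_f(-2)$ summand; this is precisely what isolates the reduced direction and produces the Kiem–Park cosection.

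On the stable pairs side I would describe $P^t_n(X,r[\mathbb{P}^1])$ in the JS chamber explicitly through the relative geometry of $D/S$, exhibiting it as fibered over a symmetric product of $S$ recording the support cycle $Z$, with fibers governed by $Z_t$-stable pairs on (thickenings of) the $\mathbb{P}^1$-fibers. For $r=1$ this is transparent: $F$ is a line bundle $\CO_{\mathbb{P}^1}(n-1)$ on a single reduced fiber and the section lives in $H^0(\CO_{\mathbb{P}^1}(n-1))$, so $P^t_n$ is (a projective bundle over) $S$ of the expected reduced dimension $n+1$, while the obstruction comes entirely from $H^1(\CO_f(-2))$. I would then compute the reduced virtual class via the Kiem–Park cosection built from the $2$-form, and I expect this geometry to literally realize the ideal computation of \S\ref{sect on heur}, from which the answer can be read off. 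For $n\geqslant 2$ the goal is to show the resulting class, paired with any $\tau$-insertions, vanishes; I would try to see this as the Euler class of the obstruction bundle acquiring a nowhere-vanishing section (coming from the $\CO_f(-2)$ factor) or, equivalently, as the vanishing of the integral of $\mathrm{ch}_3(\mathbb{F})$ along the projective-space fiber directions. For $n=1$ the same computation, reduced by the Thom–Porteous formula exactly as in the $K3\times K3$ analysis (Proposition \ref{deg loci}), should express $P^t_{1,\beta}(\gamma_1,\gamma_2)$ as a tautological integral over $S$ of classes determined by $\gamma_1,\gamma_2$ and the multiplicity $r$.

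On the GV side I would extract $n_{0,r[\mathbb{P}^1]}(\gamma_1,\gamma_2)$ from the results of \cite{COT1}: since the fibers form a reduced family of $\CO(-2)$-rational curves over $S$ and a connected genus-$0$ domain in class $r[\mathbb{P}^1]$ must cover a single fiber, the genus-$0$ Gromov–Witten invariants are governed by a degree-$r$ multiple-cover computation of this local curve together with an integral over $S$, and the GV transform of loc.\ cit.\ converts this into a single $r$-dependent coefficient times the same class over $S$ produced on the pairs side. Matching the two expressions, together with the vanishing for $n\geqslant 2$, then yields the theorem.

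The main obstacle is the imprimitivity $r\geqslant 2$. For $r=1$ every sheaf lives on a reduced fiber and both sides reduce to an essentially smooth computation over $S$; but for $r\geqslant 2$ the support can be a non-reduced thickening of a fiber or a union of fibers, so on the pairs side I must control the $Z_t$-stable pair moduli and the reduced obstruction theory over these singular or non-reduced loci, and on the GV side I must pin down the genuine degree-$r$ multiple-cover coefficient. Showing that these two $r$-dependent contributions agree — and in particular that the higher-$\chi$ classes ($n\geqslant 2$) contribute nothing — is the crux; the orientation bookkeeping demanded by the $\DT_4$ virtual class, which fixes the invariant only up to a global sign on each component, is the secondary difficulty that has to be matched consistently on both sides.
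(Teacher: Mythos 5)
Your geometric setup (curves in class $r[\mathbb{P}^1]$ are contracted by Hilbert--Chow, hence lie in $D$; a fiber has normal bundle $\oO^{\oplus 2}\oplus\oO_{\mathbb{P}^1}(-2)$ in $X$) agrees with the paper's, and for $r=1$ your plan would essentially go through. But you correctly flag $r\geqslant 2$ as the crux and then leave it unresolved, and that is precisely where the paper's proof has content your proposal lacks. The first missing ingredient is the Jordan--H\"older argument in the JS chamber: it shows $P_n^{\mathrm{JS}}(X,d[\mathbb{P}^1])$ is nonempty only if $d\mid n$ and $n>0$, and for $n=d$ it forces $F\cong i_*\pi^*\oO_Z$ for a length-$d$ subscheme $Z\subset S$, giving the clean identification $P_d^{\mathrm{JS}}(X,d[\mathbb{P}^1])\cong\Hilb^d(S)$. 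Without this you have no handle on the ``non-reduced thickenings and unions of fibers'' you worry about. In particular, for $n=1$ and $r\geqslant 2$ the pair side vanishes simply because the moduli space is empty, and the match with $n_{0,r[\mathbb{P}^1]}$ then needs the (separate) vanishing of the genus-$0$ GV invariant for $r\geqslant 2$; you gesture at a multiple-cover computation but do not pin this down. For $n=md$ with $m>1$ the vanishing is a pure dimension count against $\Sym^d(S)$: the insertions are pulled back from $\Sym^d(S)$ and land in $H^{2(md+1)}(\Sym^d(S))=0$ since $md+1>2d$.

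The second gap is the case $n=d\geqslant 2$, which is the real substance of Conjecture (1) here, and your proposed mechanism for it is not what happens. The moduli space $\Hilb^d(S)$ is smooth and compact; the dual tautological bundle $(\oO_S^{[d]})^{\vee}$ is exhibited as a maximal isotropic subbundle of the obstruction bundle, so the reduced virtual class equals $c_{d-1}(\oO_S^{[d]})\cap[\Hilb^d(S)]$. This class is \emph{not} zero in general, so the obstruction bundle does not acquire a nowhere-vanishing section, and the integrand $\ch_3(\mathbb{F})$ does not vanish fiberwise. The actual vanishing comes from Lehn's formula: $c_{d-1}(\oO_S^{[d]})=q_d(1)(1)$ is a Nakajima class supported on the Hilbert--Chow preimage of the small diagonal $\Delta\subset\Sym^d(S)$, which is only $2$-dimensional, while the insertions (being pullbacks from $\Sym^d(S)$) impose $d+1>2$ constraints there. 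This support argument has no counterpart in your proposal; only your $d=1$ case, which reduces to the tautological integral evaluated in \cite{COT1}, is covered by what you wrote.
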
 
In fact, by the Jordan-H\"older filtration and a dimension counting, the JS pair invariants of $P_{n}^{\mathrm{JS}}(X,r[\mathbb{P}^1])$ are zero unless $n=r$ and 
in which case we have 
$$P_{n}^{\mathrm{JS}}(X,n[\mathbb{P}^1]) \cong \Hilb^n(S). $$
Then the proof makes use of the Chern class operator of tautological bundles by Lehn \cite{Lehn}. 

\subsection{Multiple fiber classes of elliptic fibrations} 
Let $p: S\rightarrow\mathbb{P}^{1}$ be an elliptic $K3$ surface and consider the elliptic fibration: 
$$\bar{p}:=p\times \id_T: X:=S\times T\to \mathbb{P}^{1}\times T=:Y, $$
where $T$ is a $K3$ surface.
Denote $f$ to be a generic fiber of $\bar{p}$ and $\pt\in H_0(T)$ be the point class.
The following gives a closed formula of $Z_t$-stable pair invariants for multiple fiber classes. 
%In this section, we restrict to multiple fiber classes $\beta=r[f]\in H_2(S,\mathbb{Z})\subseteq H_2(X,\mathbb{Z})$ ($r\geqslant 1$) and 
%provide nonprimitive examples for Conjecture \ref{conj on DT4/GV} (3).
%We assume general fibers of $p$ are smooth elliptic curves and any singular fiber is either a nodal or cuspidal plane curve.
\begin{thm}\emph{(Theorem \ref{thm2 on g=1 of multiple fiber})}\label{intro thm2 on g=1 of multiple fiber}
%Let $S$ be an elliptic K3 surface and $[f]$ be the fiber class. Let $X=S\times T$ and 
Let $t>0$. Then for certain choice of orientation, we have 
\begin{align*}
\sum_{r\geqslant 0}P^t_{0,r[f]}(\gamma)\,q^r=24\,\left(\int_{S \times \pt} \gamma\right)\cdot \sum_{m\geqslant 1}\sum_{n | m}n^2q^m. \end{align*}
\end{thm}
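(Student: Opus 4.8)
The plan is to exploit the product structure $X=S\times T$ together with the elliptic fibration $p\colon S\to\BP^1$, reducing the $4$-fold count to a local computation on $S$ near the singular fibers. First, since $r[f]\in H_2(S,\BZ)\subseteq H_2(X,\BZ)$ has trivial component in $H_2(T)$, every $Z_t$-stable pair in class $r[f]$ is supported over finitely many points of $T$. Unlike the irreducible case of Proposition \ref{prop on smoothness}, for $r>1$ the sheaf may thicken in the $T$-direction, so $P^t_0(X,r[f])$ need not be a product; I would instead argue that the cosection built from the $T$-component $\sigma_T$ of the holomorphic symplectic form forces the reduced $\DT_4$ class to be supported on the sublocus of pairs with no $T$-thickening, which is identified with $P^t_0(S,r[f])\times T$. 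Integrating the descendent $\tau(\gamma)=\pi_{P\ast}(\pi_X^\ast\gamma\cup\ch_3(\mathbb{F}))$ against this class and integrating out the $T$-factor should then isolate the K\"unneth component $H^4(S)\otimes H^0(T)$ of $\gamma$ and yield the scalar $\int_{S\times\pt}\gamma$, reducing the problem to a count of $Z_t$-stable pairs $(\oO_S\to F)$ on $S$ with $[F]=r[f]$ and $\chi(F)=0$.

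Next I would analyze these pairs through $p$. The Mukai vector $(0,r[f],0)$ is isotropic and, for $r>1$, imprimitive, so there are no Gieseker-stable sheaves (for instance $\oO_{rE}$ on a fiber $E$ with trivial normal bundle is only strictly semistable); this is exactly why the problem must be handled pair-theoretically rather than by passing to $M_0(S,r[f])$ and applying Thom--Porteous as in the irreducible case. Since the support lies in fibers of $p$, the moduli space maps to the finite set of base points carrying the sheaf, and I would use the relative Fourier--Mukai transform of the elliptic fibration to convert degree-$0$ fiber sheaves into torsion sheaves on the relative compactified Jacobian, where the moduli acquires a symmetric-product/Hilbert-scheme type description in which the divisor-sum structure becomes visible.

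I would then localize the computation on the $24$ singular fibers. For a generic elliptic $K3$ there are exactly $24$ nodal ($I_1$) fibers, and I expect the smooth-fiber family to contribute trivially once the reduced/cosection-localized class and the insertion are taken into account, so that each nodal fiber contributes one copy of a universal local series. The coefficient $\sigma_2(r)=\sum_{n\mid r}n^2$ should arise from the multiple-cover structure of fiber sheaves over a single nodal cubic, the factor $n^2$ reflecting the reduced obstruction theory on the holomorphic symplectic $4$-fold together with the degree-$n$ sub-structures, while the global factor $24=e(S)$ records the sum over nodal fibers; fixing the orientation so that all $24$ local contributions add coherently then yields the stated series. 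As a consistency check, the $q^1$-coefficient $24\int_{S\times\pt}\gamma$ should match the genus-$1$ invariant $n_{1,[f]}(\gamma)$ of Conjecture \ref{intro conj on DT4/GV}(3), the $24$ counting the nodal fibers as the genus-$1$ members of $|f|$. I would also verify wall-independence for $t>0$, since here $n/(\omega\cdot\beta)=0$ and the theorem asserts the same answer for all generic $t>0$.

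The main obstacle is precisely this local computation: extracting the exact series $\sum_{n\mid r}n^2$ and the correct signs from the reduced $\DT_4$ virtual class near a nodal fiber. Because no stable sheaves exist for $r>1$, the strictly semistable locus cannot be avoided, and one must control the reduced perfect obstruction theory across the whole family of $Z_t$-stable pairs, together with the vanishing of the smooth-fiber contribution and the compatibility of orientations over the $24$ fibers. This is the genuinely pair-theoretic and hardest part of the argument; by contrast, the reductions in the first two paragraphs stay close in spirit to the $K3\times K3$ analysis already set up in the paper.
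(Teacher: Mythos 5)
There is a genuine gap, and it sits exactly where you locate it: the identification of the moduli space and the local computation. The structural fact your argument is missing is that for $t>0$ every $Z_t$-stable pair on $X=S\times T$ with class $r[f]$ and $\chi=0$ is the pullback $\bar p^{\ast}I_Z$ of an ideal sheaf of a length-$r$ zero-dimensional subscheme $Z\subset \BP^1\times T$, giving an isomorphism $\bar p^{\ast}\colon \Hilb^r(\BP^1\times T)\cong P^t_0(X,r[f])$. Your proposed reduction to $P^t_0(S,r[f])\times T$ via a second cosection built from $\sigma_T$ is not correct: the pairs that thicken in the $T$-direction are precisely the non-reduced points of $\Hilb^r(\BP^1\times T)$, they are not removed by any cosection, and the actual answer is computed on the whole Hilbert scheme (whose reduced virtual class is identified with $\pm[\Hilb^r(\BP^1\times T)]^{\vir}$ by comparing the splitting $\Ext^2_X(I,I)_0\cong\Ext^2_Y(I_Z,I_Z)_0\oplus\Ext^2_Y(I_Z,I_Z)_0^{\vee}$ with the Kiem--Park cosection, as in the proof of Theorem \ref{thm1 on g=1 of multiple fiber}). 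Relatedly, the $24$ in the formula arises as $e(T)$, the Euler characteristic of the \emph{other} $K3$ factor, through the Behrend-weighted Euler characteristic of $\Hilb^n(E\times T)/E$ computed in \cite{OS}; the proof never localizes on the $24$ nodal fibers of $p\colon S\to\BP^1$, so your third paragraph describes a mechanism that is not the one at work (even though it is numerically consistent).

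The step you defer as the ``main obstacle'' is in fact the entire content of the proof, and the paper resolves it without any relative Fourier--Mukai transform or analysis of strictly semistable sheaves: after the identification with $\Hilb^r(\BP^1\times T)$ one must evaluate $\int_{[\Hilb^r(\BP^1\times T)]^{\vir}}\pi_{M\ast}(\ch_3(\oO_{\CZ})\pi_Y^{\ast}(\omega\otimes 1))$, which is done by degenerating $\BP^1$ into a chain of three $\BP^1$'s, specializing the insertion to the middle component, and matching the resulting relative integral with the one obtained by degenerating $E$ to a nodal $\BP^1$ in the trivial fibration $E\times E\times T\to E\times T$ of Proposition \ref{prop:K3xE calculation}; there the integral reduces to $n\cdot e(\Hilb^n(E\times T)/E,\nu)=24(-1)^{n-1}\sum_{d|n}d^2$. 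Without this chain of identifications (moduli space, virtual class, degeneration to the Behrend-function computation) your outline does not yield the series $\sum_{n|m}n^2$, so the proposal as written does not constitute a proof.
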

As for the proof, we note that there is an isomorphism 
$$\bar{p}^*: \Hilb^r(Y) \cong P^t_0(X,r[f]), \quad I_Z\mapsto \bar{p}^*I_Z, $$
under which the (reduced) virtual classes 
$$(-1)^{n+1}[\Hilb^r(Y)]^{\vir}=[P^t_0(X,r[f])]^{\vir}\in A_1(\Hilb^r(Y)) $$
can be identified for certain choice of orientation on the right hand side. 
Then we are left to evaluate an integral on $[\Hilb^r(Y)]^{\vir}$ which can be done via the degeneration method and a Behrend function argument \cite{B, OS}.
We refer to Theorem \ref{thm1 on g=1 of multiple fiber} for a similar result for trivial elliptic fibration $E\times E\times T\to E\times T$ 
and the proof therein for details. 

The formula in Theorem \ref{intro thm2 on g=1 of multiple fiber} seems to support our speculation of a GV/Pairs correspondence in genus 1 for imprimivite curve classes (see \S \ref{sect on impri} for details). 

\subsection{A conjectural virtual pushforward formula} 
Finally we remark that for a general holomorphic symplectic 4-fold $X$ and an irreducible curve class  $\beta\in H_2(X,\mathbb{Z})$, 
we have a forgetful map as in \eqref{intro fort map}:
\begin{equation*}P^t_{n}(X,\beta)\to M_n(X,\beta), \end{equation*}
where $M_n(S,\beta)$ is the coarse moduli space of one dimensional stable sheaves $F$ on $X$ with $[F]=\beta$, $\chi(F)=n$.
In Appendix \S \ref{sect on app}, we conjecture a virtual pushforward formula for this map (which we verify for the product of $K3$ surfaces, see Proposition \ref{prop on prod of k3 app}). Together with Conjecture \ref{intro conj on DT4/GV} (4), this formula implies a conjectural relation between genus 2 Gopakumar-Vafa invariants and certain descendent invariants on 
$M_1(X,\beta)$ (Proposition \ref{prop on appe}), which appears as \cite[Conj.~2.2 (iii)]{COT1}.

\subsection{Notation and convention}
In this paper, all varieties and schemes are defined over $\mathbb{C}$. 
For a morphism $\pi \colon X \to Y$ of schemes, 
and for $\fF, \gG \in \mathrm{D^{b}(Coh(\textit{X\,}))}$, we denote by 
$\dR \hH om_{\pi}(\fF, \gG)$ 
the functor $\dR \pi_{\ast} \dR \hH om_X(\fF, \gG)$. 

A class $\beta\in H_2(X,\mathbb{Z})$ is called \textit{effective} if there exists a non-empty curve $C \subset X$ with class $[C] = \beta$. An effective class $\beta$ is called \textit{irreducible} if it is not the sum of two effective classes, and it is called \textit{primitive} if it is not a positive integer multiple of an effective class.

A holomorphic-symplectic variety is a smooth projective variety
with a non-degenerate holomorphic two form $\sigma\in H^0(X,\Omega^2_X)$. 
A holomorphic-symplectic variety is irreducible \textit{hyperk\"ahler}
if $X$ is simply connected and $H^0(X, \Omega_X^2)$ is generated by a symplectic form.
A $K3$ surface is an (irreducible) hyperk\"ahler  variety of dimension $2$.

%A K3 surface is a simply connected smooth projective surface $S$ such that $K_S\cong \oO_S$.

\subsection*{Acknowledgement}  
We thank Luca Battistella, Chen Jiang, Young-Hoon Kiem, Sergej Monavari, Rahul Pandharipande and Hyeonjun Park for helpful discussions.
We are grateful to the referee for a careful reading of our paper and providing many helpful comments which improved the
exposition of this paper. 

Y. C. is partially supported by RIKEN Interdisciplinary Theoretical and Mathematical Sciences
Program (iTHEMS), World Premier International Research Center Initiative (WPI), MEXT, Japan, 
JSPS KAKENHI Grant Number JP19K23397 and Royal Society Newton International Fellowships Alumni 2020 and 2021. 
G.O. is partially supported by Deutsche Forschungsgemeinschaft (DFG) - OB 512/1-1. 
Y. T. is partially supported by World Premier International Research Center Initiative (WPI initiative), MEXT, Japan, and
Grant-in Aid for Scientific Research grant (No. 19H01779) from MEXT, Japan.

%\begin{YC}Check all references to \cite{COT1} as prop/thm there have different labels now. \end{YC}

\section{Definitions and conjectures }

\subsection{Gopakumar-Vafa invariants}\label{sect on gv}

Let $X$ be a holomorphic symplectic 4-fold and $\overline{M}_{g, l}(X, \beta)$
%\begin{align*}
%\overline{M}_{g, n}(X, \beta)
%\end{align*}
be the moduli stack of genus $g$, $l$-pointed stable maps
to $X$ with non-zero curve class $\beta$. Its virtual class \cite{BF, LT} vanishes due to a trivial factor in the obstruction sheaf.
By Kiem-Li's theory of cosection localization \cite{KiL}, one can define a (reduced) virtual class\,\footnote{The virtual class mentioned in this paper
is always assumed to be the reduced one.}
$$[\overline{M}_{g, l}(X, \beta)]^{\vir}\in A_{2-g+l}(\overline{M}_{g, l}(X, \beta)). $$
For integral classes
\begin{align}\label{gamma}
\gamma_i \in H^{m_i}(X, \mathbb{Z}), \
1\leqslant i\leqslant l,
\end{align}
the (primary) Gromov-Witten invariant is defined by
\begin{align}\label{GWinv}
\mathrm{GW}_{g, \beta}(\gamma_1, \ldots, \gamma_l)
=\int_{[\overline{M}_{g, l}(X, \beta)]^{\rm{vir}}}
\prod_{i=1}^l \mathrm{ev}_i^{\ast}(\gamma_i)\in \mathbb{Q},
\end{align}
where $\mathrm{ev}_i \colon \overline{M}_{g,l}(X, \beta)\to X$
%\begin{align*}
%\mathrm{ev}_i \colon \overline{M}_{g, n}(X, \beta)
%\to X
%\end{align*}
is the $i$-th evaluation map.

When $g=0$, the
virtual dimension of $\overline{M}_{0, l}(X, \beta)$
is $l+2$, and (\ref{GWinv})
is zero unless
\begin{align}\label{sum1}
\sum_{i=1}^{l}(m_i-2)=4.
\end{align}
Similar to the case of Calabi-Yau 4-folds and 5-folds \cite{KP, PZ}, we make the following definition: 
\begin{defi}\emph{(\cite[Def.~1.5]{COT1})}\label{def of g=0 GV inv}
For any $\gamma_1, \ldots, \gamma_l \in H^{\ast}(X,\BZ)$, 
we define the genus $0$ Gopakumar-Vafa invariant $n_{0, \beta}(\gamma_1, \ldots, \gamma_l) \in \BQ$ recursively by the multiple cover formula: 
%We define $n_{0, \beta}(\gamma_1, \ldots, \gamma_n)\in \mathbb{Q}$ by multiple cover formula: 
$$\mathrm{GW}_{0, \beta}(\gamma_1, \ldots, \gamma_l)=\sum_{\begin{subarray}{c}k\geqslant 1, k|\beta  \end{subarray}}k^{n-3}\, n_{0, \beta/k}(\gamma_1, \ldots, \gamma_l). $$
%where $k|\beta$ means $\beta/k\in H_2(X,\mathbb{Z})$.
\end{defi}
When $g=1$, the virtual dimension of
$\overline{M}_{1, l}(X, \beta)$ is $l+1$, and (\ref{GWinv})
is zero unless
\begin{align}\label{sum2}
\sum_{i=1}^{l}(m_i-2)=2.
\end{align}
In this paper, we concentrate on the case when $l=1$ and $m_1=4$. 
Because curves in imprimitive curve classes are very difficult to control,
we restrict hereby to the case of a primitive curve class.
\begin{defi}\emph{(\cite[Def.~1.6]{COT1})}\label{def of g=1 GV inv}
Assume that $\beta \in H_2(X,\BZ)$ is primitive. For any $\gamma\in H^4(X, \mathbb{Z})$, we define the genus 1 Gopakumar-Vafa invariant $n_{1, \beta}(\gamma)\in \mathbb{Q}$ by
$$\mathrm{GW}_{1, \beta}(\gamma)=n_{1,\beta}(\gamma) - \frac{1}{24} \mathrm{GW}_{0,\beta}(\gamma,c_2(X)), $$
where $c_2(X)$ is the second Chern class of $T_X$. 
\end{defi}
When $g=2$, the virtual dimension of
$\overline{M}_{2, 0}(X, \beta)$ is zero, so we can consider (\ref{GWinv}) without insertions:
\begin{align*}
\mathrm{GW}_{2, \beta}:=\int_{[\overline{M}_{2, 0}(X, \beta)]^{\rm{vir}}}1\in \mathbb{Q}.
\end{align*}
\begin{defi}\emph{(\cite[Def.~1.7]{COT1})}\label{def of g=2 GV inv}
Assume that $\beta \in H_2(X,\BZ)$ is primitive. We define the genus $2$ Gopakumar-Vafa invariant $n_{2,\beta}\in \mathbb{Q}$ by
\[\mathrm{GW}_{2, \beta}=n_{2,\beta}
- \frac{1}{24} n_{1,\beta}(c_2(X))
+ \frac{1}{2 \cdot 24^2} \mathrm{GW}_{0, \beta}(c_2(X),c_2(X))
+ \frac{1}{24} N_{\mathrm{nodal},\beta}. \]
Here $n_{1,\beta}(-)$ is given in Definition \ref{def of g=1 GV inv} and $N_{\mathrm{nodal},\beta}\in \mathbb{Q}$ is the virtual count of rational nodal curves \cite{NO} 
as defined by 
\begin{equation} \label{Nnodal}
N_{\mathrm{nodal},\beta}:=
\frac{1}{2}\left[
\int_{[\overline{M}_{0,2}(X,\beta)]^{\vir}} (\ev_1 \times \ev_2)^{\ast}(\Delta_X) - \int_{[ \overline{M}_{0,1}(X,\beta) ]^{\vir}} \frac{\ev_1^{\ast}(c(X))}{1-\psi_1}
\right], 
\end{equation}
where 
\begin{itemize}
\item $\Delta_X \in H^8(X \times X)$ is the class of the diagonal, and
%\item $\lambda_k$ is the $k$-th Chern class of the Hodge bundle $\BE \to \Mbar_{1,0}(X,\beta)$ (with fiber $H^0(C,\omega_C)$ at a point $[f : C \to X]$), and
\item $c(X) = 1 + c_2(X) + c_4(X)$ is the total Chern class of $T_X$.
\end{itemize}
 \end{defi}

\subsection{$Z_t$-stable pair invariants}

Let $\omega$ be an ample divisor on $X$ and $t\in\mathbb{R}$, we recall the following notion of $Z_t$-stable pairs.
\begin{defi}\label{def Zt sta}\emph{(\cite[Lem~1.7]{CT1})}
Let $F$ be a one dimensional coherent sheaf and $s: \oO_X\to F$ be a section. For an ample divisor $\omega$, we denote the slope function
by $\mu(F)=\chi(F)/(\omega \cdot [F])$.

We say $(F,s)$ is a $Z_t$-(semi)stable pair $($$t\in\mathbb{R}$$)$ if 
\begin{enumerate}
\renewcommand{\labelenumi}{(\roman{enumi})}
\item for any subsheaf $0\neq F' \subseteq F$, we have 
$\mu(F')<(\leqslant)t$,
\item for any
subsheaf $ F' \subsetneq F$ 
such that $s$ factors through $F'$, 
we have 
$\mu(F/F')>(\geqslant)t$. 
\end{enumerate}
\end{defi}
There are two distinguished stability conditions appearing as 
special cases of $Z_t$-stability. 
\begin{defi}\label{defi:PTJSpair}\emph{(\cite{PT}, \cite[Def.~1.10]{CT1})} 

(i) A pair $(F,s)$ is a PT stable pair if
$F$ is a pure one dimensional sheaf and $s$ is surjective in dimension one. 

(ii) A pair $(F,s)$ is a JS stable pair if $s$ is a non-zero morphism, $F$ is $\mu$-semistable and 
for any subsheaf $0\neq F' \subsetneq F$ such that $s$ factors through 
$F'$ we have $\mu(F')<\mu(F)$. 
\end{defi}
\begin{prop}\label{prop:chambers}\emph{(\cite[Prop.~1.11]{CT1})} 
For a pair $(F,s)$ with $[F]=\beta$ and $\chi(F)=n$, its

(i) $Z_t$-stability with $t\to \infty$ is exactly PT stability, 

(ii) $Z_t$-stability with $t=\frac{n}{\omega\cdot \beta}+0^+$ is exactly JS stability. 
\end{prop}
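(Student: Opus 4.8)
The plan is to unwind the two special cases of Definition \ref{def Zt sta} and match them termwise with the PT and JS conditions of Definition \ref{defi:PTJSpair}; beyond this bookkeeping the only real content is an elementary slope computation together with a boundedness argument that makes the limiting values of $t$ uniform in the discrete invariants $(\beta,n)$. The computation rests on the observation that for a short exact sequence $0\to F'\to F\to F/F'\to 0$ of one-dimensional sheaves both $\chi$ and $\omega\cdot[-]$ are additive, so, writing $a=\omega\cdot[F']$, $b=\omega\cdot[F/F']$ with $a+b=\omega\cdot\beta$ and $x=\chi(F')$, one checks directly that when $a,b>0$ one has $\mu(F/F')>\mu(F)\iff \mu(F')<\mu(F)$ (and the analogous equivalences with reversed or non-strict inequalities). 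A second key point is that any nonzero subsheaf or quotient supported in dimension zero has $\omega\cdot[-]=0$ and hence slope $+\infty$.

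For part (i) I would first note that condition (i) forces $F$ to be pure of dimension one: a nonzero zero-dimensional subsheaf $F'\subseteq F$ has $\mu(F')=+\infty$, exceeding any finite $t$. Given purity, every nonzero subsheaf is one-dimensional and $\mu_{\max}(F)$ is finite, so for $t>\mu_{\max}(F)$ condition (i) holds automatically. The subsheaves $F'\subsetneq F$ through which $s$ factors are exactly those with $\im(s)\subseteq F'$, and these correspond bijectively to subsheaves of $\Cok(s)$, with $F/F'$ ranging over the nonzero quotients of $\Cok(s)$. Hence condition (ii) for large $t$ demands that every nonzero quotient of $\Cok(s)$ have infinite slope, i.e.\ be zero-dimensional, which holds iff $\Cok(s)$ itself is zero-dimensional, i.e.\ $s$ is surjective in dimension one. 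Conversely, if $(F,s)$ is PT stable then purity gives (i) for $t>\mu_{\max}(F)$ and zero-dimensionality of $\Cok(s)$ gives (ii). Using boundedness of the family of sheaves underlying PT pairs with $([F],\chi(F))=(\beta,n)$ (Theorem \ref{existence of proj moduli space}), the bound $\mu_{\max}(F)$ is uniform, so a single threshold $t_0$ works for all such pairs; this is the meaning of $t\to\infty$.

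For part (ii) set $t=\mu(F)+0^+$, where $\mu(F)=n/(\omega\cdot\beta)$. Since $(\beta,n)$ is fixed, the slopes $\mu(F')$ of subsheaves lie in a finite subset of $\mathbb{Q}$, so for $0^+$ small enough ``$<t$'' is equivalent to ``$\leqslant\mu(F)$'' and ``$>t$'' to ``$>\mu(F)$''. Then condition (i) reads: $F$ has no zero-dimensional subsheaf and $\mu(F')\leqslant\mu(F)$ for every one-dimensional $F'\subseteq F$, which is precisely $\mu$-semistability of $F$. Condition (ii) reads: for every proper $F'\subsetneq F$ through which $s$ factors, $\mu(F/F')>\mu(F)$; the admissible choice $F'=0$ occurs only when $s=0$, in which case $\mu(F/F')=\mu(F)\not> t$, so (ii) forces $s\neq 0$. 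For $0\neq F'\subsetneq F$ the slope identity above converts $\mu(F/F')>\mu(F)$ into $\mu(F')<\mu(F)$ (the zero-dimensional-quotient case $b=0$ giving $\mu(F/F')=+\infty$ and $\mu(F')<\mu(F)$ automatically). These three clauses together reproduce Definition \ref{defi:PTJSpair}(ii).

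The main obstacle is not any single inequality but the uniformity: one must ensure that the thresholds ``$t\gg 0$'' in (i) and ``$0^+$'' in (ii) depend only on $(\beta,n)$ and not on the individual pair. This is where boundedness of the relevant moduli enters, guaranteeing finitely many possible values of $\omega\cdot[F']$ and of $\chi(F')$, hence a uniform gap among the slopes; the remaining steps are the routine termwise translation of definitions and the slope arithmetic indicated above.
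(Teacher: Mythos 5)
Your argument is correct. The paper does not prove Proposition \ref{prop:chambers} itself but quotes it from \cite[Prop.~1.11]{CT1}, and your termwise translation of Definition \ref{def Zt sta} into Definition \ref{defi:PTJSpair} --- purity of $F$ from the infinite slope of zero-dimensional subsheaves, surjectivity of $s$ in dimension one from the quotients of $\Cok(s)$, the see-saw equivalence $\mu(F/F')>\mu(F)\iff\mu(F')<\mu(F)$, and the discreteness of the possible slopes (integral numerators, finitely many denominators) making the thresholds $t\gg 0$ and $0^+$ uniform in $(\beta,n)$ --- is exactly the standard argument that the cited reference carries out.
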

For $\beta \in H_2(X, \mathbb{Z})$ and $n\in \mathbb{Z}$, we denote by
$$P^t_n(X, \beta)\quad (\mathrm{resp}.\,\, \pP^t_n(X, \beta)) $$
the moduli stack of $Z_t$-stable (resp.~$Z_t$-semistable) pairs $(F,s)$ with $[F]=\beta$ and $\chi(F)=n$.

By Proposition \ref{prop:chambers}, there are two disinguished moduli spaces, 
PT moduli spaces and JS moduli spaces, 
by specializing $t\to \infty$ and $t=\frac{n}{\omega\cdot \beta}+0^+$ respectively:
\begin{align*}
P_n(X, \beta) \cneq P_n^{t\to \infty}(X, \beta), \quad
P_n^{\mathrm{JS}}(X, \beta) \cneq 
P_n^{t=\frac{n}{\omega\cdot \beta}+0^+}(X, \beta). 
\end{align*}
By a GIT construction,  
$P^t_n(X, \beta)$ is a quasi-projective scheme, and $\pP^t_n(X, \beta)$ admits a good moduli space
\begin{align*}
\pP^t_n(X, \beta) \to \overline{P}_n^t(X, \beta),
\end{align*}
where $\overline{P}_n^t(X, \beta)$ is a projective 
scheme which parametrizes $Z_t$-polystable objects.
The following result shows that moduli stacks of $Z_t$-stable pairs are indeed open substacks of moduli stacks of objects in the derived categories of coherent sheaves.
\begin{thm}\label{existence of proj moduli space}\emph{(\cite[Thm.~0.1]{CT1})} 
$P^t_n(X, \beta)$ admits an open immersion 
$$P^t_n(X, \beta)\to \mathcal{M}_0, \quad (F,s)\mapsto (\oO_X\stackrel{s}{\to} F) $$
to the moduli stack $\mathcal{M}_0 $ of $E\in D^b\Coh (X)$ with $\Ext^{<0}(E,E)=0$ and $\det(E)\cong \oO_X$.
\end{thm}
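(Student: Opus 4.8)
The plan is to realize $P^t_n(X,\beta)$ as the substack of $\mathcal{M}_0$ cut out by explicit cohomological and stability conditions, by building a morphism $\Phi$ together with an inverse $\Psi$ on an open substack. Here one uses that $\mathcal{M}_0$ is algebraic: the condition $\Ext^{<0}(E,E)=0$ is the ``universally gluable'' condition, so by the results of Lieblich and Inaba the stack of such complexes is algebraic, and fixing $\det(E)\cong\oO_X$ together with the Chern character (via $(\beta,n)$) cuts out an open and closed substack. The morphism $\Phi$ sends a $B$-flat family of $Z_t$-stable pairs $(\mathcal{F},s)$ to the two-term complex $\mathbb{I}^\bullet=[\oO_{X\times B}\stackrel{s}{\to}\mathcal{F}]$ with $\oO$ in degree $0$.

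\textbf{Step 1 (well-definedness).} First I would check that $\mathbb{I}^\bullet$ lands in $\mathcal{M}_0$ fibrewise, the family statement then following by base change. Since $F$ is one dimensional, its support has codimension $\geq 2$, so $\det F\cong\oO_X$ and hence $\det I^\bullet\cong\det\oO_X\otimes(\det F)^{-1}\cong\oO_X$. For the vanishing $\Ext^{<0}(I^\bullet,I^\bullet)=0$ I would apply $\Hom(-,\oO_X)$, $\Hom(-,F)$ and then $\Hom(I^\bullet,-)$ to the triangle $I^\bullet\to\oO_X\stackrel{s}{\to}F$. Using $\Hom(F,\oO_X)=0$ (as $F$ is torsion and $\oO_X$ torsion-free), $H^{<0}(\oO_X)=H^{<0}(F)=0$, and the vanishing of negative $\Ext$'s between sheaves, one obtains $\Ext^{<0}(I^\bullet,\oO_X)=0$ and $\Ext^{\leq -2}(I^\bullet,F)=0$, which feed through the long exact sequence to give $\Ext^{<0}(I^\bullet,I^\bullet)=0$.

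\textbf{Step 2 (reconstruction and mutual inverse).} Next I would identify the essential image. Let $\mathcal{U}\subseteq\mathcal{M}_0$ be the substack of objects $E$ with $\mathcal{H}^i(E)=0$ for $i\neq 0,1$, with $\mathcal{H}^0(E)$ torsion-free of rank one (hence, as $\det E\cong\oO_X$ and the Chern character is fixed, an ideal sheaf $I_C$ of a subscheme of dimension $\leq 1$), with $\dim\Supp\,\mathcal{H}^1(E)\leq 1$, and such that the pair produced below is $Z_t$-stable. On $\mathcal{U}$ I would define $\Psi$ by reconstructing the pair: the composite $E\to\mathcal{H}^0(E)=I_C\hookrightarrow\oO_X$ has cone $F:=\Cone(E\to\oO_X)$, and a cohomology-sheaf computation (the injectivity $I_C\hookrightarrow\oO_X$ forces $\mathcal{H}^{-1}(F)=0$, while $\mathcal{H}^0(F)$ is an extension of $\mathcal{H}^1(E)$ by $\oO_X/I_C$, both of dimension $\leq 1$) shows $F$ is a one-dimensional sheaf. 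Rotating the triangle yields a section $s\colon\oO_X\to F$ with $E\cong[\oO_X\stackrel{s}{\to}F]$, so $\Phi$ and $\Psi$ are mutually inverse. That $\Phi$ is a monomorphism uses $\Hom(I^\bullet,\oO_X)\cong\BC$ and the simplicity $\Hom(I^\bullet,I^\bullet)\cong\BC$ of $Z_t$-stable pairs, so the reconstructed map $\oO_X\to F$ is canonical up to scalar.

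\textbf{Step 3 (openness --- the main obstacle).} The crux is to prove that $\mathcal{U}$ is open in $\mathcal{M}_0$, equivalently that being of ``$Z_t$-stable-pair type'' is open on a family $\mathcal{E}$ of complexes. I would handle the defining conditions separately: vanishing of $\mathcal{H}^i(\mathcal{E})$ for $i\neq 0,1$ and $\dim\Supp\,\mathcal{H}^1\leq 1$ are open by upper semicontinuity of cohomology and of fibre dimensions; torsion-freeness of $\mathcal{H}^0$ is open since the appearance of a torsion subsheaf is a closed condition; and $Z_t$-stability for a fixed general $t$ is open by the standard boundedness-plus-openness argument for the slope $\mu$. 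The genuinely delicate points, which I expect to be the real work, are (i) to show that over the locus where the cohomological conditions hold the sheaves $\mathcal{H}^0(\mathcal{E})$ and $F=\Cone(\mathcal{E}\to\oO)$ are $B$-flat, so that $\Psi$ really produces a family of pairs (this is cohomology and base change combined with the constancy forced by $\Ext^{<0}=0$), and (ii) to check that the canonical map $\mathcal{E}\to\oO$ persists and stays rank one under specialization, so that no nearby object of $\mathcal{M}_0$ escapes pair type. Granting these, $\Phi\colon P^t_n(X,\beta)\xrightarrow{\ \sim\ }\mathcal{U}\hookrightarrow\mathcal{M}_0$ is the desired open immersion.
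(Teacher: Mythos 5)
First, a remark on the comparison you asked for: the paper does not prove Theorem \ref{existence of proj moduli space} at all --- it is imported verbatim from \cite[Thm.~0.1]{CT1} --- so your proposal can only be measured against what that reference establishes. Your Steps 1 and 2 are essentially correct and standard: the vanishing $\Ext^{<0}(I^{\bullet},I^{\bullet})=0$ via the triangle $I^{\bullet}\to\oO_X\to F$ and the triviality of $\det I^{\bullet}$ (support of $F$ in codimension $3$) are exactly right. One local error: for a complex $E$ with cohomology in degrees $0,1$ the truncation morphism goes $\mathcal{H}^0(E)\to E$, so the composite ``$E\to\mathcal{H}^0(E)\hookrightarrow\oO_X$'' does not exist; the canonical morphism $E\to\oO_X$ has to be produced from $\Hom(E,\oO_X)\cong\Hom(\mathcal{H}^0(E),\oO_X)\cong\mathbb{C}$, using $\Ext^{\leqslant 2}(\mathcal{H}^1(E),\oO_X)=0$ for a sheaf supported in dimension $\leqslant 1$ on a $4$-fold. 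You invoke this computation only later, for the monomorphism claim, so this is fixable.

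The genuine gap is that Step 3, which you yourself flag as ``the main obstacle,'' carries essentially all of the content of the theorem and is not carried out. Openness of $Z_t$-stability is not a quotable ``standard boundedness-plus-openness argument'': one must prove boundedness of the possible destabilizing subobjects and quotients uniformly in families, and this is where the wall-and-chamber structure in $t$ and the genericity of $t$ enter; it occupies a substantial part of \cite{CT1}, which proceeds differently from you, realizing $Z_t$-stable pairs as stable objects of a weak stability condition on the abelian subcategory of $D^b\Coh(X)$ generated by $\oO_X$ and $\Coh_{\leqslant 1}(X)[-1]$, with openness of the heart in families and boundedness established separately, and with the moduli space itself built by GIT. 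Likewise, semicontinuity for the $\mathcal{H}^i(\mathcal{E}_b)$ and openness of torsion-freeness of $\mathcal{H}^0$ require representing the family locally by a complex of vector bundles, because the cohomology sheaves of a family of complexes are not flat over the base; ``the appearance of a torsion subsheaf is a closed condition'' is not an argument in this setting. Finally, for the map from the scheme $P^t_n(X,\beta)$ to the stack $\mathcal{M}_0$ to be an open immersion you must also check that objects in the image have trivial automorphisms in $\mathcal{M}_0$; this follows from the fixed-determinant condition together with rank one (a scalar $\lambda$ acts on $\det E$ by $\lambda^{\rk E}$) and simplicity of $Z_t$-stable pairs, but it is nowhere addressed. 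In short, your skeleton is a legitimate alternative route, but as written it defers the theorem rather than proving it.
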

Therefore for a general choice of $t$ (i.e.~outside a finite subset of rational numbers in $\mathbb{R}$), $P^t_n(X, \beta)$ is a projective scheme which can
be given a $(-2)$-shifted symplectic derived scheme structure \cite{PTVV} and has a virtual class \cite{BJ, OT} (see also \cite{CL1}). 

Parallel to GW theory, the virtual class of $P_n^t(X,\beta)$ vanishes \cite{KiP, Sav}. 
One can define 
a reduced virtual class due to Kiem-Park \cite[Def.~8.7, Lem.~9.4]{KiP}: 
\begin{align}\label{red vir class}[P_n^t(X,\beta)]^{\vir}\in A_{n+1}(P_n^t(X,\beta),\mathbb{Q}), \end{align}
depending on the choice of orientation \cite{CGJ, CL2}. 
To define its counting invariants, let    
\begin{align}\label{equ on pri ins}\tau: H^{m}(X,\mathbb{Z})\to H^{m-2}(P_n^t(X,\beta),\mathbb{Z}), \end{align}
\begin{align*}\tau(\gamma):=\pi_{P\ast}\left(\pi_X^{\ast}\gamma \cup\ch_{3}(\mathbb{F})\right),
\end{align*}
where $\mathbb{I}=(\oO\to \mathbb{F})$ is the universal $Z_t$-stable pair and $\pi_P, \pi_X$ are projections from $P_n^t(X,\beta)\times X$ onto its factors. 
\begin{defi}\label{def DT4 inv}
Let $t\in \mathbb{R}$ be generic and $\gamma_i \in H^{m_i}(X, \mathbb{Z})$ $(1\leqslant i\leqslant l)$. The $Z_t$-stable pair invariants are    
\begin{align*}  
P_{n,\beta}^t(\gamma_1,\ldots,\gamma_l):=\int_{[P_n^t(X,\beta)]^{\rm{vir}}}\prod_{i=1}^l\tau(\gamma_i)\in\mathbb{Q}.
%\,\,\, \mathrm{where}\,\, \gamma_i \in H^{m_i}(X, \mathbb{Z}), \,\,1\leqslant i\leqslant l.
\end{align*}
When $n=-1$, we write 
$$P_{-1,\beta}^t:=\int_{[P_{-1}^t(X,\beta)]^{\rm{vir}}}1. $$
In PT and JS stabilities, we also write 
$$P_{n,\beta}(\gamma_1,\ldots,\gamma_l):=P_{n,\beta}^{t\to \infty}(\gamma_1,\ldots,\gamma_l), \,\,\, 
P^{\mathrm{JS}}_{n,\beta}(\gamma_1,\ldots,\gamma_l):=P_{n,\beta}^{t=\frac{n}{\omega\cdot \beta}+0^+
}(\gamma_1,\ldots,\gamma_l). $$
\end{defi}
\begin{rmk}
By Definition \ref{def Zt sta} and a dimension counting, $Z_t$-stable pair invariants are non-zero only if  
both of the following conditions hold: 
\begin{align*} 
t>\frac{n}{\omega\cdot \beta}, \quad \sum_{i=1}^{l}(m_i-2)=2n+2.
\end{align*}
\end{rmk}

In \cite{CMT2, CT1}, similar invariants are used to give sheaf theoretic interpretations of Gopakumar-Vafa type invariants for ordinary Calabi-Yau 4-folds \cite{KP}. 
Below, we give a parallel proposal for holomorphic symplectic 4-folds using Definition \ref{def DT4 inv}.

\subsection{Conjecture}
We state the main conjecture of this paper. 
%As in \cite{CMT2, CT1}, we gave a sheaf theoretic interpretation of GV type invariants using stable pairs.  
\begin{conj}\label{conj on DT4/GV}
Let $X$ be a holomorphic symplectic 4-fold with an ample divisor $\omega$.
Fix $n\in\mathbb{Z}$ and $\beta\in H_2(X,\mathbb{Z})$ and let $t>\frac{n}{\omega\cdot \beta}$ be generic. 
For certain choice of orientation, we have 
\begin{enumerate}
\item If $n\geqslant 2$, then 
\begin{align*} 
P_{n,\beta}^t(\gamma_1,\ldots,\gamma_l)=0.
\end{align*}
\item If $n=1$, then
\begin{align*} 
P_{1,\beta}^t(\gamma_1,\ldots,\gamma_l)=n_{0, \beta}(\gamma_1, \ldots, \gamma_l) \in \mathbb{Z}. \end{align*}
\item If $n=0$ and $\beta$ is primitive, then
\begin{align*} 
P_{0,\beta}^t(\gamma)=n_{1, \beta}(\gamma) \in \mathbb{Z}.
\end{align*}
\item If $n=-1$ and $\beta$ is primitive, then 
\begin{align*} 
P_{-1,\beta}^t=n_{2,\beta} \in \mathbb{Z}.
\end{align*}
\end{enumerate}
\end{conj}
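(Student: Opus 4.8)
The plan is to first render this conjectural GV/Pairs correspondence transparent in an \emph{ideal geometry} — where the curves in class $\beta$ are smooth, of fixed genus, and move in a family of the expected dimension — and then to upgrade that computation to genuine theorems in the explicit cases. I begin with the numerology. By the remark after Definition \ref{def DT4 inv}, $P^t_{n,\beta}(\gamma_1,\dots,\gamma_l)$ is nonzero only when $t>n/(\omega\cdot\beta)$ and $\sum_i(m_i-2)=2n+2$, while $[P^t_n(X,\beta)]^{\vir}$ lies in $A_{n+1}$. On the curve side, the reduced genus $g$ theory of a holomorphic symplectic $4$-fold has virtual dimension $2-g$ (for $l=0$), so genus $g$ curves should sweep out a family $B$ of dimension $2-g$. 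A $Z_t$-stable pair $(F,s)$ supported on a smooth genus $g$ curve $C$ with $\chi(F)=n$ has a $0$-dimensional cokernel of length $d=n+g-1$; such pairs exist only for $g\geq 1-n$, and the diagonal value $g=1-n$ (where $d=0$) isolates the three nontrivial cases $n=1,0,-1$ (genus $0,1,2$) and predicts vanishing for $n\geq 2$, where $g=1-n\leq -1$ is impossible.

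\textbf{The ideal-geometry computation.} With $\mathcal{C}\to B$ the universal curve, the PT-type fibre over a smooth $C$ is $\Sym^{d}(C)$, whence
\[ \dim P^t_n(X,\beta)=(2-g)+(n+g-1)=n+1, \]
matching the virtual dimension. In the diagonal case $n=1-g$ one has $d=0$, so the pair is simply $(\oO_C,\,\oO_X\twoheadrightarrow\oO_C)$ and $P^t_{1-g}(X,\beta)$ is identified with $B$ itself (this pair is $Z_t$-stable for every $t>n/(\omega\cdot\beta)$). I would then (i) compute the reduced virtual class of $P^t_n$ through the fibration $P^t_n\to B$ using Kiem--Park's cosection localization and the class \eqref{red vir class}, (ii) rewrite $\ch_3(\mathbb{F})$ and each insertion $\tau(\gamma_i)$ as tautological classes on $\mathcal{C}\to B$, and (iii) compare the resulting integral over $B$ with the defining formulas for $n_{0,\beta}$, $n_{1,\beta}(\gamma)$ and $n_{2,\beta}$ in Definitions \ref{def of g=0 GV inv}--\ref{def of g=2 GV inv}. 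For $n=1,0,-1$ the insertions saturate exactly the classes in those definitions; the correction terms $\tfrac1{24}\,\GW_{0,\beta}(\gamma,c_2(X))$, $N_{\mathrm{nodal},\beta}$ of \eqref{Nnodal}, and so on, should emerge from the boundary of $B$ and from the Chern classes of the relative tangent/normal data of $\mathcal{C}\to B$.

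\textbf{The vanishing for $n\geq 2$ and the main obstacle.} For $n\geq 2$ no curve of genus $1-n$ exists, so the leading stratum consists of genus $0$ curves carrying $d=n-1\geq 1$ extra points. The required vanishing should come from the reduced structure itself: the symplectic form $\sigma$ supplies the cosection whose degeneracy governs $[P^t_n(X,\beta)]^{\vir}$, and on the point-decorated loci this forces the class to integrate to $0$ against $\prod_i\tau(\gamma_i)$ once $\sum_i(m_i-2)=2n+2>4$ — reflecting that the reduced theory of a holomorphic symplectic $4$-fold has no BPS content above genus $2$. Making this precise is where the genuine difficulty lies: one must fix a global orientation so that the signs in \eqref{red vir class} are consistent across components, and one must control contributions of \emph{singular}, reducible, or (in the imprimitive cases) non-reduced curves, which the naive ideal-geometry picture simply ignores. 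I expect this — not the diagonal matching — to be the crux.

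\textbf{Upgrading to theorems.} To convert the heuristic into proofs I would specialize to geometries where $P^t_n(X,\beta)$ is smooth and globally understood. For $X=S\times T$ and irreducible $\beta$, use the splitting $P^t_n(X,\beta)\cong P^t_n(S,\beta)\times T$, the forgetful map to $M_n(S,\beta)$, the Thom--Porteous formula for the virtual class, and Markman's monodromy operators to transport the tautological integrals to Hilbert schemes $\Hilb^\bullet(S)$, where \cite{COT1} evaluates them. For $T^*\mathbb{P}^2$ and for the exceptional fibre classes on $\Hilb^2(S)$, describe the (smooth) moduli spaces directly and apply Lehn's Chern-class operators \cite{Lehn}. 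Finally, deformation invariance of the reduced theory, together with the fact that primitive classes on $K3$ deform to irreducible ones, extends the primitive cases from irreducible $\beta$ to all primitive $\beta$.
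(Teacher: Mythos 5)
First, note that this statement is the paper's main \emph{conjecture}: the paper itself offers only the ideal-geometry heuristic of \S\ref{sect on heur} together with verifications in special geometries, and your proposal mirrors that two-stage structure almost exactly (ideal geometry first, then $S\times T$ via the forgetful map, Thom--Porteous and Markman's monodromy operators, $T^*\mathbb{P}^2$, exceptional classes on $\Hilb^2(S)$ via Lehn, and deformation of primitive to irreducible classes via global Torelli). So as a strategy it is the same route; neither text constitutes a proof.

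Within the heuristic, however, you misplace the crux. The mechanism that drives both the $n\geq 2$ vanishing and the reduction of every case to the curve family is elementary: $\tau(\gamma)=\pi_{P*}(\pi_X^*\gamma\cup\ch_3(\mathbb{F}))$ depends only on the fundamental cycle of the support of $\mathbb{F}$, hence each insertion is pulled back along the support map $P^t_n(X,\beta)\to S^g_\beta$ from the at most $2$-complex-dimensional base of the curve family. Since the insertions impose $2n+2$ real constraints on $S^g_\beta$ and $\dim_{\mathbb{R}}S^g_\beta\leq 4$, the product of insertions already vanishes for $n\geq 2$; no appeal to degeneracy of the cosection or to ``the reduced structure itself'' is needed, so the step you single out as the main obstacle is disposed of by a dimension count (and this is exactly how the rigorous computation for $S\times T$ goes: for $n>1$ one has $\dim P_n(S,\beta)=\beta^2+n+1>\beta^2+2=\dim M_n(S,\beta)$ while the integrand is pulled back from $M_n(S,\beta)\times T$). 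Two further points. (a) You omit the paper's reduction to pairs with \emph{connected} support: for a pair supported on $C_1\sqcup C_2$ the obstruction space splits as in \eqref{Ext2:plus} and carries two mutually orthogonal surjective isotropic cosections, so such pairs do not contribute to the reduced class; without this one cannot speak of ``the'' genus of the supporting curve. (b) The correction terms in Definitions \ref{def of g=1 GV inv} and \ref{def of g=2 GV inv} do not need to ``emerge from the boundary of $B$'' on the pairs side: they are built into the GV definitions precisely so that $n_{1,\beta}(\gamma)$ and $n_{2,\beta}$ count honest elliptic, respectively rigid genus $2$, curves in the ideal geometry, and the pairs side matches them on the nose because stability forces the section $s$ to be an isomorphism, identifying $P^t_{1-g}(X,\beta)$ with the genus $g$ curve family itself.
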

\begin{rmk}
By the global Torelli theorem \cite{Ver, Huy}, primitive curve classes on irreducible hyperk\"ahler varieties can be deformed to irreducible curve classes. Therefore $Z_t$-stable pair invariants 
are independent of the choice of $t>\frac{n}{\omega\cdot \beta}$ for such cases by \cite[Prop.~1.12]{CT1}.
\end{rmk}
\begin{rmk}
Our conjecture implies that there is no nontrivial wall-crossing for $Z_t$-stable pairs invariants when $t>\frac{n}{\omega\cdot \beta}$, contrary to the 
ordinary $\mathrm{CY_4}$ case \cite{CT1, CT3, CT4}.
\end{rmk}
\begin{rmk}
Similarly to \cite[Conj.~0.3]{CK2}, we may use counting invariants on Hilbert schemes $I_n(X,\beta)$ of curves to give 
a sheaf theoretic interpretation of Gopakumar-Vafa invariants in which case zero dimensional subschemes \cite{CK1} (conjecturally) will not contribute, i.e. $``\DT=\PT"$. 
It is curious whether one can do a $K$-theoretic refinement as \cite{CKM1}. 
\end{rmk}

\subsection{Heuristic argument}\label{sect on heur}
In this section, we verify Conjecture \ref{conj on DT4/GV} using heuristic argument in an ideal geometry (ref.~\cite[\S 1.4, \S 1.5]{COT1}).
To be specific, as the virtual dimension of $\overline{M}_{g,0}(X,\beta)$ is $2-g$, we assume that:
%where we assume curves on $X$ deform in families of expected dimensions and have expected generic properties. Specifically we assume that
%This discussion is inspired by the  `ideal' geometry of curves in Calabi-Yau 4-folds in \cite{KP} and in Calabi-Yau 5-folds by \cite{PZ}.
%However we will see notable differences.
%Concretely, since the virtual dimension of $\Mbar_{g}(X,\beta)$ is $2-g$, we expect that:
\begin{quote}
Any genus $g$ curve moves in a smooth compact $(2-g)$-dimensional family.
\end{quote}
In particular, there are no curves of genus $g \geqslant 3$.
Unfortunately, complicated phenomena still arise even in the ideal case, for example, one can have two (resp.~one) dimensional 
families of reducible rational (resp.~elliptic) curves, and any member of a rational curve family is expected to intersect nontrivially with
some member in the same family (see \cite[\S 1.4]{COT1} for details). 

However, things will be simplified if we make the following additional assumptions:
\begin{itemize}
\item $X$ is irreducible hyperk\"ahler,
\item the effective curve class $\beta \in H_2(X,\BZ)$ is primitive,
\end{itemize}
By the global Torelli for (irreducible) hyperk\"ahler varieties \cite{Ver, Huy},
the pair $(X,\beta)$ is deformation equivalent (through a deformation with keeps $\beta$ of Hodge type)
to a pair $(X', \beta')$, where $\beta' \in H_2(X,\BZ)$ is irreducible, so we may without loss of generality assume:
\begin{itemize}
\item the effective curve class $\beta \in H_2(X,\BZ)$ is irreducible.
\end{itemize}
Under these assumptions, our ideal geometry of curves simplifies to the following form:
\begin{enumerate}
\item
The rational curves in $X$ of class $\beta$
move in a proper 2-dimensional smooth family of embedded irreducible rational curves. Except for a finite number of rational nodal curves, the rational curves are smooth, with normal bundle $\CO_{\p^1} \oplus \CO_{\p^1} \oplus \mathcal{O}_{\mathbb{P}^{1}}(-2)$. 
\item
The arithmetic genus $1$ curves in $X$ of class $\beta$ move in a proper 1-dimensional smooth family of embedded irreducible genus 1 curves. Except for a finite number of rational nodal curves, the genus one curves are smooth elliptic curves with normal bundle $L\oplus L^{-1}\oplus \oO$, where $L$ is a generic degree zero line bundle.
%Finitely many of the curves are rational nodal.
\item
All genus two curves are smooth and rigid.
\item
There are no curves of genus $g\geqslant   3$.
\end{enumerate}
%The expected behaviour of families of rational/elliptic curves on $X$ are more complicated than for both Calabi-Yau 4-folds \cite{KP} and Calabi-Yau 5-folds \cite{PZ}. When $\beta$ is primitive, things are much more simplified. 
We need to compute $Z_t$-stable pair invariants in this ideal setting. 
The key heuristic we use is that only $Z_t$-stable pairs with \textit{connected support} will `contribute' to our invariants. 

The observation is that for a $Z_t$-stable pair $I=(\oO_X\to F)$ such 
that $F$ 
is supported on a disconnected curve $C=C_1\sqcup C_2$, we may write 
$F=F_1 \oplus F_2$
where $F_i$ is supported on $C_i$ ($i=1,2$).
We set 
$$I_1=(\oO_X\to F_1), \,\, I_2=(\oO_X\to F_2).$$
 Then the obstruction space 
satisfies 
\begin{align}\label{Ext2:plus}\Ext^2(I,I)_0=\Ext^2(I_1,I_1)_0\oplus \Ext^2(I_2,I_2)_0. 
\end{align}
Indeed there is a distinguished triangle 
(see the argument of~\cite[(2.13)]{CMT2}):
\begin{align*}
    \dR \mathcal{H}om(I, F) \to \dR \mathcal{H}om(I, I)_0[1] \to \dR \mathcal{H}om(F, \mathcal{O}_X)[2], 
\end{align*}
and we have 
\begin{align}\notag
    \dR \mathcal{H}om(I, F)\cong
    \dR \mathcal{H}om(I, F_1) \oplus  \dR \mathcal{H}om(I, F_2)\cong\dR \mathcal{H}om(I_1, F_1) \oplus  \dR \mathcal{H}om(I_2, F_2), 
\end{align}
where the second isomorphism follows since
$I$ is isomorphic to $I_i$ near the support of $F_i$. 
Combining with 
$$\dR \mathcal{H}om(F, \mathcal{O}_X)=
\dR \mathcal{H}om(F_1, \mathcal{O}_X)
\oplus \dR \mathcal{H}om(F_2, \mathcal{O}_X),$$ 
we obtain  
\begin{align*}
   \dR \mathcal{H}om(I, I)_0[1]
   \cong \dR \mathcal{H}om(I_1, I_1)_0[1]
   \oplus \dR \mathcal{H}om(I_2, I_2)_0[1]. 
\end{align*}
Hence (\ref{Ext2:plus}) holds. 

Therefore the surjective isotropic cosections (see~\cite[Lem.~9.4]{KiP}) of obstruction spaces in the RHS of (\ref{Ext2:plus}) give rise to a (mutually orthogonal) two dimensional isotropic cosection in the LHS. Heuristically speaking, such $Z_t$-stable pairs will not `contribute' to the reduced virtual class as the reduced obstruction space still have a surjective isotropic cosection.

By Definition \ref{def DT4 inv} and above discussion, $Z_t$-stable pair invariants
\begin{align*}  
P_{n,\beta}^t(\gamma_1,\ldots,\gamma_l)=\int_{[P_n^t(X,\beta)]^{\rm{vir}}}\prod_{i=1}^l\tau(\gamma_i)
\end{align*}
count $Z_t$-stable pairs whose support are connected and incident to cycles dual to $\gamma_1,\ldots, \gamma_l$. 
Say such an incident $Z_t$-stable pair is supported on a $(2-g)$-dimensional family:
$$p:\cC^g_\beta\to S^g_\beta $$ 
of genus $g$ curves ($g=0,1,2$), where $\cC^g_\beta$ is the total space of this family.
Each cycle $\gamma_i$ will cut down real dimension of $S^g_\beta$ by $\deg(\gamma_i)-2$. As we have 
$$\sum_{i=1}^{l}(\deg(\gamma_i)-2)=2n+2, $$
so all insertions in total cut down real dimension of $S^g_\beta$ by $2n+2$. 

${}$ \\
\textbf{The case $n\geqslant 1$}. When $n\geqslant 2$, the dimension cut down by insertions is bigger than the largest possible dimension of $S^g_\beta$, so there can not be such incident stable pairs and 
$$P_{n\geqslant 2,\beta}^t(\gamma_1,\ldots,\gamma_l)=0. $$ 
This confirms Conjecture \ref{def DT4 inv} (1). 

When $n=1$, insertions cut down real dimension of $S^g_\beta$ by $4$, so any incident $Z_t$-stable pair $I=(\oO_X\to F)$ can only be supported on genus 0 family. 
As in \cite[\S4.1]{CT1}, by Harder-Narasimhan and Jordan-H\"older filtration, we know  
$$F\cong \oO_C, $$
for some rational curve $C$ in $S^0_{\beta}$. Therefore incident $Z_t$-stable pairs (with $\chi=1$) are in one to one correspondence with 
intersection points of $\cC^0_{\beta}$ with cycles dual to $\gamma_1,\ldots,\gamma_l$ and 
\begin{align*}  
P_{1,\beta}^t(\gamma_1,\ldots,\gamma_l)=\int_{S^0_\beta}\prod_{i=1}^lp_*(f^*\gamma_i),  \end{align*}
where $f: \mathcal{C}^0_{\beta}\to X$ is the evaluation map.
Therefore Conjecture \ref{def DT4 inv} (2) is confirmed in this ideal case as both sides are (virtually) enumerating rational curves of class 
$\beta$ incident to cycles dual to $\gamma_1,\ldots,\gamma_l$.

${}$ \\
\textbf{The case $n=0$}. Since $Z_t$-stable pairs $I=(\oO_X\to F)$ supported on genus $0$ curves satisfy $\chi(F)>0$ and  
a 4-cycle $\gamma\in H^4(X)$ misses genus 2 curves in general position, so when $[F]=\beta$ is irreducible, the pair 
must be scheme theoretically supported on an elliptic curve $C$ and 
$$I=(\oO_X\twoheadrightarrow \oO_C\stackrel{s}{\to} L), $$
where $L$ is a line bundle on $C$ with $\chi(C,L)=0$. By $Z_t$-stability, $s$ is non-trivial, so $s$ must 
be an isomorphism by the stability of line bundles. Therefore incident $Z_t$-stable pairs (with $\chi=0$) are in one to one correspondence with 
intersection points of 4-cycle $\gamma$ with genus 1 curve family $\cC^1_\beta$ of class $\beta$ and 
\begin{align*}  
P_{0,\beta}^t(\gamma)=\int_{\cC^1_\beta}f^*\gamma,  \end{align*}
where $f: \mathcal{C}^1_{\beta}\to X$ is the evaluation map. 
Therefore Conjecture \ref{def DT4 inv} (3) is confirmed in this ideal setting as both sides are (virtually) enumerating elliptic curves of class 
$\beta$ incident to $\gamma$.

${}$ \\
\textbf{The case $n=-1$}. Any $Z_t$-stable pair $I=(\oO_X\to F)$ with irreducible curve class $[F]=\beta$ 
is scheme theoretically supported on a smooth rigid genus 2 curve $C$: 
$$I=(\oO_X\twoheadrightarrow \oO_C\stackrel{s}{\to} L), $$
where $L$ is a line bundle on $C$ with $\chi(C,L)=-1$. As above, by $Z_t$-stability, $s$ must 
be an isomorphism. Hence $P_{-1}^t(X,\beta)$ is identified with 
the set of all rigid genus 2 curves in class $\beta$ in the ideal geometry, whose count gives exactly genus 2 Gopakumar-Vafa invariant $n_{2,\beta}$. 
Therefore Conjecture \ref{def DT4 inv} (4) is confirmed in the ideal setting.

\subsection{Speculations for general curve classes}\label{sect on impri}
For a smooth projective Calabi-Yau 4-fold $X$ and $\gamma\in H^4(X,\mathbb{Z})$, 
we have genus $0$, $1$ Gopakumar-Vafa type invariants $n_{0,\beta}(\gamma), n_{1,\beta}\in\mathbb{Q}$ defined 
by  Klemm and Pandharipande
from Gromov-Witten theory \cite{KP} and stable pair invariants $P_{n,\beta}(\gamma)\in \mathbb{Z}$ \cite{CMT2} (here $P_{n,\beta}(\gamma)$ is a shorthand for 
$P_{n,\beta}(\gamma,\ldots,\gamma)$). 
They are related by the following conjectural formula \cite[\S 1.7]{CMT2}: 
\begin{align}\label{equ on pt/gv on cy4}
\sum_{n,\beta}\frac{P_{n,\beta}(\gamma)}{n!}y^n q^{\beta}
=\prod_{\beta>0}\Big(\exp(yq^{\beta})^{n_{0,\beta}(\gamma)}\cdot 
M(q^{\beta})^{n_{1,\beta}}\Big), 
\end{align}
where $M(q)=\prod_{k\geqslant 1}(1-q^{k})^{-k}$ is the MacMahon function. 

By taking logarithmic differentiation with respect to $y$, we obtain 
\begin{align*}
y\frac{d}{dy}\log\left(\sum_{n,\beta}\frac{P_{n,\beta}(\gamma)}{n!}y^n q^{\beta}\right)=\sum_{\beta>0}y\frac{d}{dy}n_{0,\beta}(\gamma)yq^\beta =\sum_{\beta>0}n_{0,\beta}(\gamma)yq^\beta.
\end{align*}
If we view it as an equality for corresponding reduced invariants on holomorphic symplectic 4-folds, 
it surprisingly recovers Conjecture \ref{conj on DT4/GV} (i),~(ii) (i.e. the genus zero part). 

We do similar manipulations for genus one invariants. Note that $y^0q^\beta$ parts of \eqref{equ on pt/gv on cy4} are 
$$\sum_{\beta}P_{0,\beta}q^\beta=\prod_{\beta>0}M(q^{\beta})^{n_{1,\beta}}. $$
This equality is written down by a computation in the ``$\mathrm{CY_4}$ ideal geometry" (ref.~\cite[\S 2.5]{CMT2}), where rational curves contribute zero and each super-rigid elliptic curve 
(on an ideal $\mathrm{CY_4}$)
in class $\beta$ contributes by $M(q^\beta)$ (ref.~\cite[Thm.~5.10]{CMT2}). 
Taking logarithmic differentiation with respect to $q$:
\begin{align*}q\frac{d}{dq}\log\left(M(q) \right)=\sum_{d\geqslant 1}q^d\sum_{i\geqslant1,i|d}i^2.  \end{align*}
We then wonder whether in the holomorphic symplectic 4-folds setting, each ideal elliptic curve family in class $\beta$ contributes to $P_{0,d\beta}(\gamma)$ by 
$$\sum_{i\geqslant1,i|d}i^2. $$
Summing over all elliptic curve families, this would imply
\begin{align}\label{general pt/gv on hk4}P_{0,\beta}(\gamma)=\sum_{d\geqslant1,d|\beta}n_{1,\beta/d}(\gamma) \sum_{i\geqslant1,i|d}i^2.  \end{align}
It is quite curious whether the above formula gives the correct PT/GV correspondence. For multiple fiber classes 
of elliptic fibrations, our computations show the formula seems correct (see Theorem \ref{thm1 on g=1 of multiple fiber}, \ref{thm2 on g=1 of multiple fiber}, Remark \ref{rmk on impr}). As for $P_{-1,\beta}$ and genus 2 Gopakumar-Vafa invariants, we haven't found analogous 
formula for general curve classes.

\section{Product of $K3$ surfaces}
In this section, we consider the product of two $K3$ surfaces: 
$$X=S\times T, \quad \mathrm{with} \,\, \beta\in H_2(S,\mathbb{Z})\subseteq H_2(X,\mathbb{Z}). $$ 
As observed in \cite[\S 5]{COT1}, this contains all interesting curve classes on $X$ because if $\beta \in H_2(X, \BZ)$
is of non-trivial degree over both $S$ and $T$, one can construct two linearly independent cosections,
which imply that reduced Gromov-Witten invariants of $X$ in this class vanish.

\subsection{Gopakumar-Vafa invariants}
Recall Gopakumar-Vafa invariants specified in Definitions \ref{def of g=0 GV inv}, \ref{def of g=1 GV inv}, \ref{def of g=2 GV inv}. 
They are computed in \cite[Prop.~5.1]{COT1} as follows: 
write $\gamma,\gamma'\in H^{4}(X)$ as 
\begin{align*}
    \gamma&=A_1\cdot 1\otimes \pt+D_1\otimes D_2+A_2\cdot \pt\otimes 1, \\
\gamma'&=A'_1\cdot 1\otimes \pt+D'_1\otimes D'_2+A'_2\cdot \pt\otimes 1, 
\end{align*}
based on K\"unneth decomposition:
$$H^{4}(X)\cong (H^0(S)\otimes H^4(T))\oplus (H^2(S)\otimes H^2(T))\oplus (H^4(S)\otimes H^0(T)). $$
Fix also a curve class
$$\alpha=\theta_1\otimes \pt+\pt\otimes \theta_2\in H^6(X) \cong (H^2(S)\otimes H^4(T))\oplus (H^4(S)\otimes H^2(T)).$$
\begin{prop}\emph{(\cite[Prop.~5.1]{COT1})}\label{prop on gw on prod}
For $\beta\in H_2(S,\mathbb{Z})\subseteq H_2(X,\mathbb{Z})$, we have 
%the genus $0$ Gopakumar-Vafa invariants are:
\begin{align*}
n_{0,\beta}(\gamma, \gamma') &=(D_1\cdot\beta)\cdot (D_1'\cdot\beta)\cdot\int_T(D_2\cdot D_2')\cdot N_{0}\left(\frac{\beta^2}{2}\right), \\
n_{0,\beta}(\alpha)&=(\theta_1\cdot \beta)\,N_{0}\left(\frac{\beta^2}{2}\right). 
\end{align*}
If $\beta$ is primitive, we have
\begin{align*}
n_{1, \beta}(\gamma)= 24 A_2\,  N_1\left(\frac{\beta^2}{2}\right), \quad n_{2,\beta}= N_2\left( \frac{\beta^2}{2} \right), 
\end{align*}
where
\begin{align}\sum_{l\in\mathbb{Z}}N_{0}(l)\, q^l&=\frac{1}{q} \prod_{n\geqslant   1}\frac{1}{(1-q^n)^{24}}, \label{equ on N0} \\
\sum_{l \in \BZ} N_{1}(l)\,q^l &=\left(\frac{1}{q} \prod_{n\geqslant   1}\frac{1}{(1-q^n)^{24}}\right)\left(q \frac{d}{dq}G_2(q)\right), \label{equ on N1} \\
\sum_{l\in\mathbb{Z}}N_{2}(l)\, q^l&=\left(\frac{1}{q} \prod_{n\geqslant   1}\frac{1}{(1-q^n)^{24}}\right) \left( 24 q \frac{d}{dq} G_2 - 24 G_2 - 1 \right),  \label{equ on N2}\end{align}
%\begin{equation}\label{intro defi of N0}\sum_{l\in\mathbb{Z}}N_{0}(l)\, q^l=\frac{1}{q} \prod_{n\geqslant   1}\frac{1}{(1-q^n)^{24}},  \end{equation}
%\begin{equation}\label{intro equ py formula} \sum_{l \in \BZ} N_{1}(l)\,q^l =\left(\frac{1}{q} \prod_{n\geqslant   1}\frac{1}{(1-q^n)^{24}}\right) 
%\left(q \frac{d}{dq}G_2(q)\right),   \end{equation}
%\begin{equation}\label{intro defi of N2}\sum_{l\in\mathbb{Z}}N_{2}(l)\, q^l
%=\left(\frac{1}{q} \prod_{n\geqslant   1}\frac{1}{(1-q^n)^{24}}\right) \left( 24 q \frac{d}{dq} G_2 - 24 G_2 - 1 \right),\end{equation}
with Eisenstein series: 
$$G_2(q) = -\frac{1}{24} + \sum_{n \geqslant   1} \sum_{d|n} d q^n. $$
\end{prop}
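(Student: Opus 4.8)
The plan is to reduce everything to the $K3$ surface $S$ and then invoke the Katz--Klemm--Vafa evaluations for $S$ (as carried out in \cite{COT1}). Since $\beta\in H_2(S,\mathbb{Z})$ has degree zero over $T$, every stable map $f\colon C\to X$ of class $\beta$ has constant projection to $T$, so $\overline{M}_{g,l}(X,\beta)\cong\overline{M}_{g,l}(S,\beta)\times T$ with $f=(f_S,t)$. First I would split the tangent complex, $f^{\ast}T_X=f_S^{\ast}T_S\oplus(\oO_C\otimes T_tT)$, so that the deformation theory is the $S$-theory plus a trivial rank-two summand whose obstruction is $H^1(C,\oO_C)\otimes T_tT=\mathbb{E}^{\vee}\boxtimes T_T$, with $\mathbb{E}$ the Hodge bundle. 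The holomorphic symplectic form $\sigma=\sigma_S+\sigma_T$ supplies the single cosection used in the reduction; I would argue that on the product it acts as the standard $K3$ cosection on the $f_S^{\ast}T_S$ factor, so that $[\overline{M}_{g,l}(X,\beta)]^{\vir}=c_{\mathrm{top}}(\mathbb{E}^{\vee}\boxtimes T_T)\cap\big([\overline{M}_{g,l}(S,\beta)]^{\vir}\times[T]\big)$, the class on the right being the reduced one carrying the $K3$ cosection of $S$.

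With this factorization the integrals become products of a $K3$ integral and a $T$-integral. A class $\gamma\in H^4(X)$ decomposes by K\"unneth as $A_1\cdot 1\otimes\pt+D_1\otimes D_2+A_2\cdot\pt\otimes 1$, and pulling back by $\mathrm{ev}_i$ and integrating over $T$ selects only those K\"unneth components whose $T$-degree matches the codimension left after capping with $c_{\mathrm{top}}(\mathbb{E}^{\vee}\boxtimes T_T)$. In genus $0$ the Hodge bundle vanishes, the $T$-obstruction is absent, and only the divisorial components survive: by the divisor equation the $S$-factor contributes $(D_1\cdot\beta)(D_1'\cdot\beta)$ and the $T$-factor contributes $\int_T D_2\cdot D_2'$, leaving the bare Yau--Zaslow count $N_0(\beta^2/2)$ of \eqref{equ on N0}; the one-insertion case with $\alpha\in H^6(X)$ is identical and isolates $(\theta_1\cdot\beta)\,N_0(\beta^2/2)$. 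The components $1\otimes\pt$ and $\pt\otimes 1$ drop out by over- or under-saturation of the $T$-integral.

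In genus $1$ and $2$ the rank-$2g$ bundle $\mathbb{E}^{\vee}\boxtimes T_T$ supplies the Euler class; using $c_1(T_T)=0$ one finds that $c_{\mathrm{top}}(\mathbb{E}^{\vee}\boxtimes T_T)$ contains $\mathrm{pr}_T^{\ast}c_2(T_T)$-type pieces whose $T$-integral produces the factor $e(T)=24$ (to the appropriate power), while the complementary $\mathbb{E}$-dependent pieces are killed by $T$-degree. For $g=1$ this leaves $24A_2$ times the genus-$1$ reduced point-integral $\int_{[\overline{M}_{1,1}(S,\beta)]^{\vir}}\mathrm{ev}^{\ast}(\pt)=N_1(\beta^2/2)$, and for $g=2$ (with $l=0$) the full $c_4$-expansion against $[\overline{M}_{2,0}(S,\beta)]^{\vir}$. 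Inserting the Katz--Klemm--Vafa evaluations of these genus $1,2$ reduced Hodge integrals on $S$---packaged as $q/\Delta$ times quasimodular corrections in $G_2$---reproduces \eqref{equ on N1} and \eqref{equ on N2}. Finally I would run the Gopakumar--Vafa inversion: Definitions \ref{def of g=0 GV inv}, \ref{def of g=1 GV inv} and \ref{def of g=2 GV inv} convert $\mathrm{GW}_{g,\beta}$ into $n_{0,\beta},n_{1,\beta},n_{2,\beta}$, the multiple-cover sums collapsing to a single term since $\beta$ is primitive in the genus $1,2$ statements, while the rational-nodal correction $N_{\mathrm{nodal},\beta}$ of \eqref{Nnodal} enters only the $g=2$ identity and is read off from the rational-curve geometry on $S$.

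The main obstacle is the first step: establishing that the reduced virtual class of $X$ factors as $c_{\mathrm{top}}(\mathbb{E}^{\vee}\boxtimes T_T)\cap\big([\overline{M}_{g,l}(S,\beta)]^{\vir}\times[T]\big)$. One must check that the cosection induced by $\sigma=\sigma_S+\sigma_T$ restricts to the $K3$ cosection on the $S$-summand while leaving the $T$-direction obstruction as a genuine excess bundle, so that the reduction removes exactly one dimension; the orientation and sign bookkeeping, responsible for the precise signs in the series and the coefficients in \eqref{equ on N2}, is the delicate part. Once the virtual class is identified in this form, the K\"unneth selection, the appearance of $24=e(T)$, and the Gopakumar--Vafa inversion are essentially formal.
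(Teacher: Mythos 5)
This proposition is not proved in the paper at all: it is imported verbatim, with citation, from the companion paper \cite[Prop.~5.1]{COT1}, so there is no internal proof to compare against. Your sketch does follow the same product-formula strategy used there: the identification $\overline{M}_{g,l}(X,\beta)\cong\overline{M}_{g,l}(S,\beta)\times T$, the splitting of the obstruction theory with the extra summand $\mathbb{E}^{\vee}\boxtimes T_T$, the observation that the cosection induced by $\sigma_S\boxplus\sigma_T$ only sees the $S$-direction (the $T$-component of $df$ vanishes), and the resulting reduction of all integrals to KKV-type evaluations on the $K3$ surface $S$. So at the level of strategy the proposal is the right one, and you correctly isolate the virtual-class factorization as the technical crux.

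There are, however, two places where the sketch as written would not go through. First, in genus $0$ the cross term $(1\otimes\pt)\cdot(\pt\otimes 1)$ is \emph{not} killed by $T$-degree saturation: its $T$-factor integrates to $1$, and the vanishing of the remaining integral $\int_{[\overline{M}_{0,2}(S,\beta)]^{\vir}}\ev_1^{\ast}(1)\,\ev_2^{\ast}(\pt)$ needs the string equation. Second, and more seriously, in genus $2$ the ``full $c_4$-expansion against $[\overline{M}_{2,0}(S,\beta)]^{\vir}$'' does not produce \eqref{equ on N2}; it produces zero. Indeed, since $c_1(T_T)=0$ one computes
\begin{align*}
c_4\big(\mathbb{E}^{\vee}\boxtimes T_T\big)=\lambda_2^2+c_2(T_T)\,(\lambda_1^2-2\lambda_2)+c_2(T_T)^2,
\end{align*}
and Mumford's relations $\lambda_1^2=2\lambda_2$, $\lambda_2^2=0$ on $\overline{M}_2$ together with $c_2(T_T)^2=0$ give $\mathrm{GW}_{2,\beta}=0$. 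The whole of $N_2$ in \eqref{equ on N2} therefore comes from the correction terms in Definition \ref{def of g=2 GV inv}: the term $\tfrac{1}{24}n_{1,\beta}(c_2(X))=24\,N_1$ supplies $24q\frac{d}{dq}G_2$, and the remaining $-24G_2-1$ is carried entirely by the nodal count $N_{\mathrm{nodal},\beta}$ of \eqref{Nnodal}, whose evaluation requires the separate genus-$0$ two-point diagonal integral and the $\psi$-class integral on $\overline{M}_{0,1}(X,\beta)$ reduced to $S$. Your proposal defers exactly this to ``rational-curve geometry on $S$'' without computing it, which is where the substantive work of \cite[Prop.~5.1]{COT1} actually lies; with that evaluation supplied (and the string-equation point above), the argument closes.
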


\subsection{Moduli spaces of $Z_t$-stable pairs}
For a point $t\in T$, let $i_t \colon S\to S\times \{t\}\hookrightarrow X$ be the inclusion. 
Consider the pushforward map 
\begin{align}\label{equ psf map}i_*:P^t_{n}(S,\beta)\times T\to P^t_{n}(X,\beta), \end{align}
\begin{align*}(\oO_S\stackrel{s}{\to} F,\,t)\mapsto (\oO_X\twoheadrightarrow i_{t*}\oO_{S}\stackrel{i_{t*}s}{\to} i_{t*}F), \end{align*}
where $P^t_n(S,\beta)$ is the moduli space of $Z_t$-stable pairs $(F,s)$ on $S$ with $[F]=\beta$ and $\chi(F)=n$.
Without causing confusions, we use the same notation $t$ for both the stability parameter and closed points in $T$. 

We restrict to the following setting. 
%\begin{set}\label{setting}We consider either of the following cases: \begin{enumerate}\item when $t=\frac{n}{\omega\cdot \beta}+0^+$ $($i.e.~in JS stability$)$ and $(n,\omega\cdot\beta)=1$. \item when $t\to \infty$ $($i.e.~in PT stability$)$ and $\beta$ is irreducible.\end{enumerate}\end{set}
\begin{set}\label{setting} 
We consider the case when the following conditions are satisfied:
\begin{enumerate}
\item The map \eqref{equ psf map} is an isomorphism and $P^t_{n}(S,\beta)$ is smooth of dimension $\beta^2+n+1$.
%\item $P^t_{n}(S,\beta)$ is smooth of dimension $\beta^2+n+1$.
\item There is a well-defined forgetful map 
$$f: P^t_{n}(S,\beta)\to M_n(S,\beta), \quad (\oO_S\to F)\mapsto F, $$
to the coarse moduli scheme $M_n(S,\beta)$ of one dimensional stable sheaves $F$ on $S$ with $[F]=\beta$ and $\chi(F)=n$.
\end{enumerate}
\end{set}
\begin{prop}\label{prop on smoothness}
Setting \ref{setting} is satisfied when $\beta$ is irreducible. 
%and $P^t_{n}(S,\beta)$ is smooth of dimension $n+\beta^2+1$. 
\end{prop}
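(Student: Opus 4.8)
The plan is to verify the two conditions of Setting~\ref{setting} separately, the crux being to show that every $Z_t$-stable pair on $X=S\times T$ in a class $\beta\in H_2(S,\BZ)$ is pushed forward from a single fibre $S\times\{t\}$; once this is established, everything reduces to statements about pairs on the single $K3$ surface $S$. First I would analyse the support. Let $(F,s)$ be a $Z_t$-stable pair on $X$ with $[F]=\beta$ and $\chi(F)=n$. Condition (i) of Definition~\ref{def Zt sta} rules out any nonzero $0$-dimensional subsheaf, whose slope would be $+\infty>t$, so $F$ is pure of dimension one. Since $\beta\in H_2(S,\BZ)$ its pushforward to $H_2(T,\BZ)$ vanishes, so every irreducible component of $\Supp(F)$ maps to a point of $T$; if two distinct points occurred, $F$ would split as a direct sum supported on disjoint fibres and $\beta$ would be a sum of two nonzero effective classes, contradicting irreducibility. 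Hence $\Supp(F)\subseteq S\times\{t\}$ set-theoretically for a unique $t\in T$. To upgrade this to scheme-theoretic support, let $\Fm_t\subset\oO_X$ denote the ideal of $S\times\{t\}$ and consider $\Fm_t F\subseteq F$: as a subsheaf of the pure sheaf $F$ it is either $0$ or one-dimensional, and in the latter case both $\Fm_t F$ and $F/\Fm_t F\cong i_{t*}(F|_{S\times\{t\}})$ would be nonzero one-dimensional sheaves with effective classes summing to $\beta$, again contradicting irreducibility. Therefore $\Fm_t F=0$ and $F=i_{t*}F'$ for a one-dimensional sheaf $F'$ on $S$.

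This pointwise analysis identifies the closed points of $P^t_n(X,\beta)$ with those of $P^t_n(S,\beta)\times T$. Choosing the polarization on $X$ so that $\omega|_{S\times\{t\}}$ represents the ample class used on $S$, one has $\mu_X(i_{t*}G')=\mu_S(G')$ for every one-dimensional sheaf $G'$ on $S$; since any subsheaf of $i_{t*}F'$ is annihilated by $\Fm_t$ and hence of the form $i_{t*}G'$ for $G'\subseteq F'$, the two stability conditions of Definition~\ref{def Zt sta} transfer verbatim, so $(F,s)$ is $Z_t$-stable if and only if the corresponding pair $(F',s')$ on $S$ is. To promote the bijection to an isomorphism of schemes I would construct the inverse morphism: the $T$-component of the support defines a morphism $P^t_n(X,\beta)\to T$, and the scheme-theoretic support statement holds in families (being a closed condition satisfied on every fibre, it propagates to the total family by flatness), so the universal pair is scheme-theoretically supported on the graph of this morphism. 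Descending the universal pair along $i$ then yields the inverse of $i_*$, proving the first assertion of (1).

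For the forgetful map $f$ and the smoothness I work on $S$. Because $F'$ is pure and $[F']=\beta$ is irreducible, any proper subsheaf with full support has strictly smaller $\chi$, hence strictly smaller slope, while subsheaves of smaller support class are excluded by irreducibility; thus $F'$ is automatically Gieseker-stable, so $f\colon(\oO_S\to F')\mapsto F'$ lands in $M_n(S,\beta)$ and is a morphism by the universal property of the coarse moduli space, giving (2). For smoothness I would use the deformation theory of the pair $I^{\bullet}=[\oO_S\xrightarrow{s}F']$, governed by $\Ext^{\bullet}_S(I^{\bullet},F')$ with tangent space $\Hom(I^{\bullet},F')$ and obstructions in $\Ext^1(I^{\bullet},F')$ (cf.~\cite{CT1}). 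From the triangle $I^{\bullet}\to\oO_S\xrightarrow{s}F'$ and $H^2(S,F')=0$ one gets $\Ext^2(I^{\bullet},F')=0$, and the genuine (trace-free) obstruction reduces, via $K_S\cong\oO_S$ and Serre duality, to the cokernel of the composition map $\circ\, s\colon\Ext^1(F',F')\to H^1(F')$; its surjectivity follows from the purity and stability of $F'$ (equivalently, the injectivity of the Serre-dual map $s\circ(-)\colon\Ext^1(F',\oO_S)\to\Ext^1(F',F')$). Hence the deformation theory is unobstructed and $P^t_n(S,\beta)$ is smooth. Finally, Riemann--Roch with Mukai vector $v(F')=(0,\beta,n)$ gives $\chi_S(F',F')=-\beta^2$ and $\chi_S(I^{\bullet},F')=\chi(F')-\chi_S(F',F')=\beta^2+n$; combined with $\Ext^2(I^{\bullet},F')=0$ and the one-dimensional trace summand in $\Ext^1(I^{\bullet},F')$, this yields $\dim\Hom(I^{\bullet},F')=\beta^2+n+1$, the asserted dimension.

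The main obstacle is the passage from the pointwise bijection to a genuine isomorphism of schemes, namely controlling the scheme-theoretic support in families so as to produce the support morphism to $T$ and descend the universal pair; this is where flatness must be handled carefully. The second delicate point is the vanishing of the section-obstruction $\operatorname{coker}(\circ\, s)$, which is precisely where the hypotheses are used: the $K3$ condition enters through $K_S\cong\oO_S$ and Serre duality, while irreducibility of $\beta$ forces the purity and stability of $F'$ that make the relevant Serre-dual map injective.
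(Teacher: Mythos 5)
Your proposal is correct and follows the same skeleton as the paper's proof (support analysis giving $P^t_n(X,\beta)\cong P^t_n(S,\beta)\times T$, then smoothness of the pair space on $S$), but where the paper argues almost entirely by citation you supply direct arguments, and for the smoothness you take a genuinely different route. The paper first invokes \cite[Prop.~1.12]{CT1} to reduce to the PT chamber, cites \cite[Lem.~2.2]{CMT1} for the fact that a stable sheaf in a class from $H_2(S,\BZ)$ is scheme-theoretically supported on a fibre $S\times\{t\}$, and cites \cite[Prop.~3.11]{CMT2} for promoting this to the isomorphism of moduli schemes; your purity-plus-irreducibility analysis of $\Fm_t F$ reproves exactly that key point, and you correctly isolate the family/descent step as the delicate part. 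For smoothness the paper quotes Kawai--Yoshioka and \cite[Prop.~C.2]{PT2}, whose proof is geometric: for irreducible $\beta$ every stable pair lives on an integral curve and $P_n(S,\beta)$ is identified with a relative Hilbert scheme of points on the universal curve over $|\oO_S(\beta)|$, which is manifestly smooth of dimension $\beta^2+n+1$. Your deformation-theoretic alternative is sound but has one compressed step: $\Ext^2(I^{\bullet},F)=0$ together with surjectivity of $\circ s\colon \Ext^1(F,F)\to H^1(F)$ (which does hold, being Serre-dual to the injectivity of $\phi\mapsto\phi\circ s$ on $\Hom$-groups into torsion-free sheaves) only shows that the obstruction space $\Ext^1(I^{\bullet},F)$ is exactly the one-dimensional summand $\Ext^2(F,F)\cong\BC$ coming from the long exact sequence, not that it vanishes. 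To conclude smoothness you must still argue that the pair-obstruction class dies in this summand, e.g.\ by noting that $\Ext^1(I^{\bullet},F)\to\Ext^2(F,F)$ is injective here and carries the obstruction for the pair to the obstruction for $F$, which vanishes by the usual trace/determinant argument for simple sheaves on a $K3$; your phrase ``trace-free obstruction'' points at this but does not spell it out. With that step made explicit, your argument is complete, and it also makes transparent why the tangent space has dimension $\chi(I^{\bullet},F)+1=\beta^2+n+1$, whereas the geometric identification in \cite{KY, PT2} buys smoothness and projectivity in one stroke without any obstruction calculus.
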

\begin{proof}
When $\beta$ is irreducible, $P^t_{n}(X,\beta)$ is independent of the choice of $t>\frac{n}{\omega\cdot \beta}$ \cite[Prop.~1.12]{CT1},
so we can set $t\to \infty$ and work with PT stability. 
The isomorphism follows from similar argument as \cite[Prop.~3.11]{CMT2}.
The key point is that for any such $Z_t$-stable pair $(F,s)$, $F$ is stable and therefore scheme theoretically supported on $S\times \{t\}$ for some 
$t\in T$ (\cite[Lem.~2.2]{CMT1}). 
The smoothness of $P^t_{n}(S,\beta)$ follows from \cite{KY}, \cite[Prop.~C.2]{PT2}.
\end{proof}

\subsection{Virtual classes}
We determine the virtual class of $P^t_{n}(X,\beta)$ in Setting \ref{setting}. Firstly recall:
\begin{defi}$($\cite[Ex.~16.52,~pp.~410]{Sw}, \cite[Lem.~5]{EG}$)$
Let $E$ be a $\mathrm{SO}(2n,\mathbb{C})$-bundle with a non-degenerate symmetric bilinear form $Q$ on a connected scheme $M$. 
Denote $E_+$ to be its positive real form\,\footnote{This means a real half dimensional subbundle such that $Q$ is real and positive definite on it. By
homotopy equivalence $\mathrm{SO}(m,\mathbb{C})\sim \mathrm{SO}(m,\mathbb{R})$, it exists and is unique up to isomorphisms.}.
The half Euler class of $(E,Q)$ is 
$$e^{\frac{1}{2}}(E,Q):=\pm\,e(E_+)\in H^{2n}(M,\mathbb{Z}), $$
where the sign depends on the choice of orientation of $E_+$. 
\end{defi}

\begin{defi}$($\cite{EG},~\cite[Def.~8.7]{KiP}$)$
Let $E$ be a $\mathrm{SO}(2n,\mathbb{C})$-bundle with a non-degenerate symmetric bilinear form $Q$ on a connected scheme $M$. 
An isotropic cosection of $(E,Q)$ is a map 
$$\phi: E\to \oO_M, $$
such that the composition 
$$\phi\circ \phi^{\vee}: \oO_M\to E^{\vee}\stackrel{Q}{\cong} E \to \oO_M$$
is zero. If $\phi$ is furthermore surjective, we define the (reduced) half Euler class: 
$$e_{\mathrm{red}}^{\frac{1}{2}}(E,Q):=e^{\frac{1}{2}}\left((\phi^{\vee}\oO_M)^{\perp}/(\phi^{\vee}\oO_M),\bar{Q}\right)\in H^{2n-2}(M,\mathbb{Z}), $$
as the half Euler class of the isotropic reduction.
Here $\bar{Q}$ denotes the induced non-degenerate symmetric bilinear form on $(\phi^{\vee}\oO_M)^{\perp}/(\phi^{\vee}\oO_M)$. 
\end{defi}
We show reduced half Euler classes are independent of the choice of surjective isotropic cosection. 
\begin{lem}$($\cite[Lem.~5.5]{COT1}$)$\label{lem on indep of cosec}
Let $E$ be a $\mathrm{SO}(2n,\mathbb{C})$-bundle with a non-degenerate symmetric bilinear form $Q$ on a connected scheme $M$ and 
$$\phi: E\to \oO_M $$
be a surjective isotropic cosection.
Then we can write the positive real form $E_+$ of $E$ as 
$$E_+=\eE_+\oplus \underline{\mathbb{R}}^2$$
such that 
$$e_{\mathrm{red}}^{\frac{1}{2}}(E,Q)=\pm\,e(\eE_+). $$
Moreover, it is independent of the choice of surjective cosection. 

In particular, when $E=\oO^{\oplus2} \oplus V$ such that $Q=\begin{pmatrix}
0 & 1 \\
1 & 0
\end{pmatrix} \oplus Q|_{V}$, we have  
$$e_{\mathrm{red}}^{\frac{1}{2}}(E,Q)=\pm\,e^{\frac{1}{2}}(V,Q|_{V}).$$
\end{lem}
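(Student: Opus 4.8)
The plan is to convert the analytic datum of the cosection into a purely linear-algebraic splitting of $(E,Q)$ by a hyperbolic plane, to compute the positive real form of that plane, and finally to prove independence by deforming the cosection inside the (connected) variety of all isotropic cosections.

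First I would analyse the isotropic line determined by $\phi$. Since $\phi$ is surjective, $\phi^{\vee}\colon \oO_M\to E^{\vee}$ is a subbundle inclusion, so $L:=\phi^{\vee}\oO_M\cong\oO_M$ is a \emph{trivial} line subbundle, and the isotropy condition $\phi\circ\phi^{\vee}=0$ says exactly that $L$ is $Q$-isotropic; write $L=\oO_M\cdot v$ for the nowhere-vanishing isotropic section $v=Q^{-1}\phi^{\vee}(1)$. I would then complete $L$ to a hyperbolic plane: splitting the surjection $\phi$ yields a section $w_0$ with $Q(v,w_0)=1$, and replacing $w_0$ by $w:=w_0-\tfrac12 Q(w_0,w_0)\,v$ makes $w$ isotropic while preserving $Q(v,w)=1$. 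Thus $H:=\oO_M\cdot v\oplus\oO_M\cdot w\cong\oO_M^{\oplus2}$ carries the hyperbolic form and $E=H\perp H^{\perp}$ orthogonally. Because $L\subseteq H$ one has $H^{\perp}\subseteq L^{\perp}$ and $H^{\perp}\cap L\subseteq H^{\perp}\cap H=0$, so the composite $H^{\perp}\hookrightarrow L^{\perp}\twoheadrightarrow L^{\perp}/L$ is an isometry onto the isotropic reduction $(\phi^{\vee}\oO_M)^{\perp}/(\phi^{\vee}\oO_M)$ with its form $\bar Q$, whence $e_{\mathrm{red}}^{\frac12}(E,Q)=e^{\frac12}(H^{\perp},Q|_{H^{\perp}})$.

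Next I would compute positive real forms compatibly with the orthogonal splitting. The vectors $u_1=\tfrac1{\sqrt2}(v+w)$ and $u_2=\tfrac{i}{\sqrt2}(v-w)$ satisfy $Q(u_1,u_1)=Q(u_2,u_2)=1$ and $Q(u_1,u_2)=0$, and $Q$ is real and positive definite on $\mathbb{R}u_1\oplus\mathbb{R}u_2$; hence they trivialize a positive real form $H_{+}=\underline{\mathbb{R}}^2$ of the hyperbolic plane. Setting $E_{+}:=\eE_{+}\oplus H_{+}$ with $\eE_{+}:=(H^{\perp})_{+}$ then exhibits a positive real form of $E$ with $E_{+}=\eE_{+}\oplus\underline{\mathbb{R}}^2$, as claimed. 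By multiplicativity of the Euler class, $e(E_{+})=e(\eE_{+})\,e(\underline{\mathbb{R}}^2)=0$ (reproving the vanishing of the unreduced class), while $e_{\mathrm{red}}^{\frac12}(E,Q)=e^{\frac12}(H^{\perp})=\pm\,e(\eE_{+})$. For the special case $E=\oO^{\oplus2}\oplus V$ I would simply take $v$ to be the first generator, so that $H=\oO^{\oplus2}$ and $H^{\perp}=V$, giving $e_{\mathrm{red}}^{\frac12}(E,Q)=\pm\,e^{\frac12}(V,Q|_V)$.

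The independence statement is where I expect the real work to be. The surjective isotropic cosections form the locus of surjective $\phi$ inside the affine quadric cone $\{\phi:\phi\circ\phi^{\vee}=0\}$ in the (finite-dimensional, for $M$ projective) vector space $\Hom(E,\oO_M)$, and the assignment $\phi\mapsto\eE_{+,\phi}$ is continuous, so $e(\eE_{+,\phi})$ is locally constant; it therefore suffices to connect any two cosections by a path, along which the bundles $\eE_{+,\phi}$ assemble into a family over $M\times[0,1]$ and are consequently isomorphic. The cleanest mechanism is that if $Q(v_0,v_1)$ is nowhere zero then $L_0\oplus L_1$ is itself a hyperbolic plane and the two reductions are literally isometric; the general case I would reduce to this by deforming inside the cone, with the low-rank degenerations of $\phi\mapsto\phi\circ\phi^{\vee}$ treated separately since there the cone may fail to be irreducible. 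The hard part will be establishing connectivity of this cone, and it is precisely here that holomorphy is indispensable: a purely topological comparison of orthonormal $2$-frames of $E_{+}$ fails, because the tautological complement over the Stiefel manifold $V_2(\mathbb{R}^{2n})$ has Euler class equal to twice a generator, so distinct real $2$-frames can alter $e(\eE_{+})$ by a nonzero even class.
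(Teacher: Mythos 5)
The paper does not actually prove this lemma: it is imported verbatim from \cite[Lem.~5.5]{COT1}, so there is no in-paper argument to compare against. Judged on its own terms, the first two paragraphs of your proposal are correct and complete. The identification of $L=\phi^{\vee}\oO_M$ with a trivial isotropic line subbundle, its completion to a hyperbolic plane $H$ via a splitting of $\phi$ (which need only be taken in the $C^{\infty}$ category --- a holomorphic splitting may not exist, but a smooth one always does and suffices for Euler classes), the isometry $H^{\perp}\cong L^{\perp}/L$, the explicit positive real form $\mathbb{R}u_1\oplus\mathbb{R}u_2$ of $H$, and the deduction of the special case $E=\oO^{\oplus 2}\oplus V$ are all sound.

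The gap is in the independence statement, which is the substantive content of the ``Moreover'', and you correctly flag it yourself: only the case where $Q(v_0,v_1)$ is nowhere vanishing is actually proved. You could have disposed of one more case cheaply: if $Q(v_0,v_1)\equiv 0$ and $v_0,v_1$ are everywhere linearly independent, then $\bar v_1$ is a nowhere-vanishing isotropic section of the reduction $W_0=v_0^{\perp}/\langle v_0\rangle$, so by the first part of the lemma applied to $(W_0,\bar Q)$ one gets $e(\eE_{+,0})=0$, and symmetrically $e(\eE_{+,1})=0$, so the two classes agree trivially. What remains genuinely open in your write-up is the residual case where the two sections are orthogonal and become proportional along a nonempty proper closed subset (this does occur: take $v_0=(1,0,0)$ and $v_1=(1,0,u)$ in $\oO^{\oplus2}\oplus V$ with $u$ an isotropic section of $V$ vanishing somewhere), together with the connectivity of the locus of nowhere-vanishing isotropic sections that your deformation argument requires. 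For $M$ proper and reduced this is salvageable --- $H^0(M,\oO_M)=\BC$, so the isotropy condition is a single quadric $q$ on the finite-dimensional space $H^0(M,E)$, the nowhere-vanishing locus is Zariski-open by properness, and a Zariski-open subset of the irreducible cone $\{q=0\}$ is connected except when $\mathrm{rank}(q)=2$, which must be handled separately --- but the lemma is stated for an arbitrary connected scheme, where $H^0(M,\oO_M)$ need not be one-dimensional and this argument does not directly apply. Your closing observation that the claim cannot be purely topological (the complement of the tautological $2$-frame over $V_2(\mathbb{R}^{2n})$ has Euler class twice a generator, so stably isomorphic rank-$(2n-2)$ complements can have different Euler classes) is correct and is a good sanity check that the isotropic structure must enter; but identifying where the difficulty lies is not the same as resolving it, so as written the independence assertion remains unproven.
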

Recall a $\mathrm{Sp}(2r,\mathbb{C})$-bundle (or symplectic vector bundle) is a complex vector bundle of rank $2r$ 
with a non-degenerate anti-symmetric bilinear form.
One class of quadratic vector bundles is given by tensor product of two symplectic vector bundles $V_1, V_2$. 
Their half Euler classes
can be computed using Chern classes of $V_1,V_2$. 
For our purpose, we restrict to the following case.
\begin{lem}$($\cite[Lem.~5.6]{COT1}$)$\label{lem on compu of half euler class}
Let $(V_1,\omega_1)$, $(V_2,\omega_2)$ be a $\mathrm{Sp}(2r,\mathbb{C})$ $($resp.~$\mathrm{Sp}(2,\mathbb{C})$-bundle$)$ on a connected scheme $M$. 
Then
$$(V_1\otimes V_2,\omega_1\otimes \omega_2)$$ 
defines a $SO(4r,\mathbb{C})$-bundle whose half Euler class satisfies 
$$e^{\frac{1}{2}}(V_1\otimes V_2,\omega_1\otimes \omega_2)=\pm\,\big(e(V_1)-c_{2r-2}(V_1)\cdot e(V_2)\big). $$
\end{lem}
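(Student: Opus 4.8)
The plan is to trivialize everything by the symplectic splitting principle and to use that the half Euler class is multiplicative over orthogonal direct sums of hyperbolic planes. First I would check the underlying linear algebra: on decomposable vectors $(\omega_1\otimes\omega_2)(v_1\otimes v_2,\,w_1\otimes w_2)=\omega_1(v_1,w_1)\,\omega_2(v_2,w_2)$, and interchanging the two arguments introduces two sign changes that cancel, so $\omega_1\otimes\omega_2$ is symmetric; it is non-degenerate since each $\omega_i$ is. Hence $(V_1\otimes V_2,\omega_1\otimes\omega_2)$ is an $SO(4r,\BC)$-bundle and $e^{\frac12}(V_1\otimes V_2,\omega_1\otimes\omega_2)\in H^{4r}(M,\BZ)$ is defined. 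By the splitting principle for symplectic bundles I may pass to a cover on which cohomology injects and assume $V_1=\bigoplus_{i=1}^r(L_i\oplus L_i^{-1})$ and $V_2=L\oplus L^{-1}$ as orthogonal sums of symplectic planes; write $a_i=c_1(L_i)$ and $b=c_1(L)$, so that $c(V_1)=\prod_{i=1}^r(1-a_i^2)$ and $e(V_2)=c_2(V_2)=-b^2$.

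The core computation is the half Euler class on this split model. Tensoring, $V_1\otimes V_2$ breaks up orthogonally as $\bigoplus_{i=1}^r\big[(L_iL\oplus L_i^{-1}L^{-1})\oplus(L_iL^{-1}\oplus L_i^{-1}L)\big]$, and $\omega_1\otimes\omega_2$ pairs each line bundle with its dual, restricting to the hyperbolic form on every summand of the shape $N\oplus N^{\vee}$. For one hyperbolic plane $N\oplus N^{\vee}$ with $N$ a line bundle, the positive real form has real rank $2$ and complexifies to $N\oplus N^{\vee}$ with $c_2(N\oplus N^{\vee})=c_1(N)c_1(N^{\vee})=-c_1(N)^2$, so the complexification relation gives $e(E_+)^2=(-1)\,c_2(N\oplus N^{\vee})=c_1(N)^2$ and $e^{\frac12}(N\oplus N^{\vee})=\pm\,c_1(N)$. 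Since the positive real form of an orthogonal sum is the sum of the positive real forms, the half Euler class is multiplicative, whence
$$e^{\frac12}(V_1\otimes V_2,\omega_1\otimes\omega_2)=\pm\prod_{i=1}^r c_1(L_iL)\,c_1(L_iL^{-1})=\pm\prod_{i=1}^r(a_i+b)(a_i-b)=\pm\prod_{i=1}^r(a_i^2-b^2).$$

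It remains to re-express this symmetric polynomial through the Chern classes of $V_1$ and $V_2$. Using $c_{2k}(V_1)=(-1)^k e_k(a_1^2,\dots,a_r^2)$ together with $b^2=-e(V_2)$, a direct expansion gives
$$\prod_{i=1}^r(a_i^2-b^2)=\sum_{k=0}^r(-1)^k\,c_{2k}(V_1)\,e(V_2)^{\,r-k}.$$
The two leading terms $k=r,\,r-1$ sum to $(-1)^r\big(e(V_1)-c_{2r-2}(V_1)\,e(V_2)\big)$, using $c_{2r}(V_1)=e(V_1)$, while every other term carries a factor $e(V_2)^{\,r-k}$ with $r-k\geq 2$ and is annihilated by $e(V_2)^2=c_2(V_2)^2=0$, which holds in the relevant geometry (e.g.\ when $V_2$ is pulled back from the surface factor, where $H^{>4}$ vanishes). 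Absorbing the overall sign into the $\pm$ ambiguity yields the claimed identity.

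The hard part will be pinning down the correct square root. The identity $e^{\frac12}(V_1\otimes V_2)^2=c_{4r}(V_1\otimes V_2)=\big(\prod_i(a_i^2-b^2)\big)^2$ determines $e^{\frac12}$ only up to sign and could a priori fail to be the product over the positive roots; the real content is therefore the verification that the hyperbolic decomposition is compatible with the real structure, so that multiplicativity of the half Euler class across it is legitimate, which is precisely what fixes the square root. A secondary point requiring explicit justification is the truncation $c_2(V_2)^2=0$: without it the half Euler class is a genuine degree-$r$ polynomial in $e(V_2)$ rather than the displayed linear expression, so I would state precisely the feature of the ambient space that forces this vanishing.
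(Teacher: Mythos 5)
The paper does not prove this lemma; it is imported verbatim from \cite[Lem.~5.6]{COT1}, so there is no internal proof to compare against. Your argument is the expected one (symplectic splitting principle plus the hyperbolic decomposition of $V_1\otimes V_2$), and the computation $e^{\frac12}(V_1\otimes V_2)=\pm\prod_i(a_i^2-b^2)=\pm\sum_{k=0}^r(-1)^kc_{2k}(V_1)e(V_2)^{r-k}$ is correct. The issue you defer as ``the hard part'' --- pinning down the square root and justifying multiplicativity over the hyperbolic planes --- is not actually delicate: after splitting $V_2=L\oplus L^{-1}$, the subbundle $V_1\otimes L$ is maximal isotropic for $\omega_1\otimes\omega_2$ (since $\omega_2$ vanishes on $L$), and by Edidin--Graham \cite{EG} the half Euler class of a quadratic bundle with a maximal isotropic subbundle $N$ is $\pm e(N)$; here $e(V_1\otimes L)=\prod_i(b^2-a_i^2)$ gives the same answer in one step, with no need to decompose into rank-two pieces or to check compatibility of real structures.

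Your second caveat, however, is a genuine one and you are right not to wave it away: the truncation to the two displayed terms requires $e(V_2)^2=0$, which does \emph{not} hold on an arbitrary connected scheme $M$. For $r\geq 2$ the statement as literally quoted fails, e.g.\ for $V_1=\oO_M^{\oplus 4}$ with the standard symplectic form and $V_2=\oO(1)\oplus\oO(-1)$ on $M=\BP^4$, where $e^{\frac12}(V_1\otimes V_2)=\pm h^4\neq 0$ while $e(V_1)-c_2(V_1)e(V_2)=0$. The correct unconditional formula is $\pm\sum_{k=0}^{r}(-1)^{r-k}c_{2k}(V_1)\,e(V_2)^{r-k}$. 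In every application in this paper $V_2$ is (the pullback of) $T_T$ for a $K3$ surface $T$, so $e(V_2)^2\in H^8(T)=0$ and the two-term form is valid; but a complete proof should either add this hypothesis or state the full alternating sum. Up to recording that hypothesis explicitly, your proof is correct.
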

We determine the (reduced) virtual class of $P^t_{n}(X,\beta)$. 
\begin{thm}\label{thm on vir clas}
In Setting \ref{setting}, for certain choice of orientation, we have 
\begin{equation}\label{vir class StimesT}
[P^t_{n}(X,\beta)]^{\mathrm{vir}}=
\left([P^t_{n}(S,\beta)]\cap f^*e(T_{M_n(S,\beta)})\right)\times[T]-e(T)\left([P^t_{n}(S,\beta)]\cap f^*c_{\beta^2}(T_{M_n(S,\beta)})\right), 
\end{equation}
where $f: P^t_{n}(S,\beta)\to M_{n}(S,\beta)$ is the map as in Setting \ref{setting}. 
\end{thm}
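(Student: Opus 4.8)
The plan is to exploit that $P^t_n(X,\beta)\cong P_S\times T$ (write $P_S:=P^t_n(S,\beta)$, $M:=M_n(S,\beta)$, and let $\mathrm{pr}_S,\mathrm{pr}_T$ be the two projections) is \emph{smooth}, so that its Kiem--Park reduced virtual class is computed by a reduced half Euler class, and then to identify the relevant obstruction bundle as a tensor product of two symplectic bundles so that Lemmas \ref{lem on indep of cosec} and \ref{lem on compu of half euler class} apply. Concretely, set $\pi=\pi_P\colon P^t_n(X,\beta)\times X\to P^t_n(X,\beta)$ and consider the obstruction sheaf $\mathrm{Ob}:=\mathcal{E}xt^2_{\pi}(\mathbb{I},\mathbb{I})_0$. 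Since $P^t_n(X,\beta)$ is smooth by Proposition \ref{prop on smoothness} and $\Ext^1_X(I,I)_0$ has constant rank equal to $\dim(P_S\times T)$, the sheaf $\mathrm{Ob}$ is locally free; relative Serre duality together with $K_X\cong\oO_X$ endows it with a non-degenerate symmetric form $Q$, and the holomorphic symplectic form $\sigma_T$ on $T$ induces a surjective isotropic cosection. By Kiem--Park's construction \cite{KiP}, for the chosen orientation one then has
\begin{align*}
[P^t_n(X,\beta)]^{\vir}=e^{\frac{1}{2}}_{\mathrm{red}}(\mathrm{Ob},Q)\cap [P_S\times T],
\end{align*}
so the whole problem reduces to computing this reduced half Euler class.

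The key step is to describe $(\mathrm{Ob},Q)$ as a quadratic bundle. Writing $i_t\colon S\hookrightarrow X$ and using that its normal bundle is $N\cong T_tT\otimes\oO_S$ (trivial of rank $2$, with $\wedge^2 N$ trivialized by $\sigma_T$), I would combine the formula $Li_t^{\ast}i_{t\ast}(-)\cong\bigoplus_j (-)\otimes\wedge^j N^{\vee}[j]$ with the triangle
\begin{align*}
\dR\HOM(\mathbb{I},\mathbb{F})\to \dR\HOM(\mathbb{I},\mathbb{I})_0[1]\to \dR\HOM(\mathbb{F},\oO_X)[2]
\end{align*}
of \S\ref{sect on heur} to compute $\Ext^2_X(I,I)_0$ fibrewise and then globalize over $P_S\times T$ via the universal families. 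The outcome I expect is an isometry
\begin{align*}
(\mathrm{Ob},Q)\cong \big(\oO^{\oplus 2},H\big)\ \oplus\ \big(f^{\ast}T_{M}\otimes \mathrm{pr}_T^{\ast}T_T,\ \omega_M\otimes\sigma_T\big),
\end{align*}
where $f$ is the forgetful map of Setting \ref{setting}, $H$ is the hyperbolic form, $T_M=\mathcal{E}xt^1(\mathbb{F}_S,\mathbb{F}_S)$ (pulled back from $P_S$) carries the Mukai symplectic form $\omega_M$ since $M$ is holomorphic symplectic, and $T_T$ carries $\sigma_T$. Here the rank-$2$ hyperbolic summand arises from the mutually Serre-dual line contributions of $\wedge^0 N$ and $\wedge^2 N$, while the main summand comes from $\Ext^1_S(F,F)\otimes(\wedge^1 N=T_tT)$; the tensor of the two antisymmetric forms is symmetric, matching the $\mathrm{SO}$-structure, and a rank count ($\mathrm{rk}\,\mathrm{Ob}=2\beta^2+6$) confirms there is exactly one hyperbolic plane.

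Granting this, the conclusion is routine. By Lemma \ref{lem on indep of cosec} the hyperbolic summand drops out, so $e^{\frac{1}{2}}_{\mathrm{red}}(\mathrm{Ob},Q)=\pm\,e^{\frac{1}{2}}(f^{\ast}T_M\otimes \mathrm{pr}_T^{\ast}T_T,\omega_M\otimes\sigma_T)$. Applying Lemma \ref{lem on compu of half euler class} with $V_1=f^{\ast}T_M$ (of rank $2r=\beta^2+2$, so $c_{2r-2}=c_{\beta^2}$) and $V_2=\mathrm{pr}_T^{\ast}T_T$ gives
\begin{align*}
e^{\frac{1}{2}}(f^{\ast}T_M\otimes \mathrm{pr}_T^{\ast}T_T)=\pm\big(f^{\ast}e(T_M)-f^{\ast}c_{\beta^2}(T_M)\cdot \mathrm{pr}_T^{\ast}e(T)\big),
\end{align*}
and capping with $[P_S\times T]=[P_S]\times[T]$ reproduces the two terms of \eqref{vir class StimesT}, the overall sign being absorbed into the choice of orientation.

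The main obstacle is the quadratic-bundle identification of the second paragraph: one must not only match the underlying bundles but verify that the Serre-duality form $Q$ decomposes as the orthogonal sum of a hyperbolic plane and the product symplectic form $\omega_M\otimes\sigma_T$. This requires carefully tracking the pair-versus-sheaf correction through the triangle above, so as to see that the section contributes \emph{exactly} the hyperbolic plane and does not perturb the $T_M\otimes T_T$ part, and checking the antisymmetry of $\omega_M$ and of $\sigma_T$ compatibly with the K\"unneth decomposition of Serre duality on $S\times T$. This is precisely the point where the holomorphic symplectic geometry of $M_n(S,\beta)$ and of $T$ enters essentially.
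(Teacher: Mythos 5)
Your proposal follows essentially the same route as the paper's proof: reduce to a reduced half Euler class on the smooth space $P^t_{n}(S,\beta)\times T$, decompose the obstruction bundle via the pair triangle and the adjunction/Koszul identity for the pushforward from $S\times\{t\}$ into a hyperbolic plane plus $f^{*}T_{M_n(S,\beta)}\otimes T_T$, and conclude with Lemmas \ref{lem on indep of cosec} and \ref{lem on compu of half euler class}. The step you flag as the main obstacle is handled in the paper by a rank count showing that the natural map from $\mathcal{E}xt^1_{\pi_{P_S}}(\mathbb{I}_S,\mathbb{F}_S)\oplus \mathcal{E}xt^1_{\pi_{P_S}}(\mathbb{F}_S,\mathbb{F}_S)\boxtimes T_T\oplus \mathcal{E}xt^0_{\pi_{P_S}}(\mathbb{F}_S,\mathbb{F}_S)$ to $\mathcal{E}xt^2_{\pi_{P_X}}(\mathbb{I}_X,\mathbb{I}_X)_0$ is an isomorphism, with compatibility with the Serre-duality pairing deferred to \cite[Prop.~4.7]{CMT2}.
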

\begin{proof}
The proof is similar as \cite[Prop.~4.7]{CMT2}. 
Under the isomorphism \eqref{equ psf map}: 
\begin{align*}P^t_{n}(S,\beta)\times T\cong P^t_{n}(X,\beta), \end{align*}
the universal stable pair $\mathbb{I}_X=(\oO\to \mathbb{F}_X)$ of $P^t_{n}(X,\beta)$ satisfies 
\begin{align}\label{equ of univ sheaf on prod}\mathbb{F}_X=\mathbb{F}_S\boxtimes \oO_{\Delta_T}, \end{align}
where $\mathbb{I}_S=(\oO\to \mathbb{F}_S)$ is the universal stable pair of $P^t_{n}(S,\beta)$ and $\Delta_T$ denotes the diagonal.

As in \cite[Eqn.~(29)]{CMT2}, we have a distinguished triangle
\begin{align}\label{equ on dist tri1}\dR\hH om_{\pi_{P_X}}(\mathbb{I}_X,\mathbb{F}_X)
\to \dR\hH om_{\pi_{P_X}}(\mathbb{I}_X,\mathbb{I}_X)_0[1]\to \dR\hH om_{\pi_{P_X}}(\mathbb{F}_X,\oO)[2],   \end{align}
where $\pi_{P_X}: P^t_{n}(X,\beta)\times X\to P^t_{n}(X,\beta)$ is the projection. 

From stable pair $\mathbb{I}_X=(\oO\to \mathbb{F}_X)$ and Eqn.~\eqref{equ of univ sheaf on prod}, we get a distinguished triangle
\begin{align}\label{equ on dist tri2}\dR\hH om_{\pi_{P_X}}(\mathbb{F}_S\boxtimes \oO_{\Delta_T},\mathbb{F}_S\boxtimes \oO_{\Delta_T})
\to \dR\hH om_{\pi_{P_X}}(\oO,\mathbb{F}_S\boxtimes \oO_{\Delta_T})\to \dR\hH om_{\pi_{P_X}}(\mathbb{I}_X,\mathbb{F}_X).  \end{align}
By adjunction, we get an isomorphism 
\begin{align}\label{equ on iso of rhom}\dR\hH om_{\pi_{P_X}}(\mathbb{F}_S\boxtimes \oO_{\Delta_T},\mathbb{F}_S\boxtimes \oO_{\Delta_T})
\cong \dR\hH om_{\pi_{P_S}}(\mathbb{F}_S,\mathbb{F}_S)\boxtimes \wedge^iT_T[-i],
\end{align}
where $\pi_{P_S}\colon P^t_{n}(S,\beta)\times S\to P^t_{n}(S,\beta)$ is the projection. 

Combining \eqref{equ on dist tri2} and \eqref{equ on iso of rhom}, we obtain 
\begin{align}\label{equ on iso on rhomIf}
\dR\hH om_{\pi_{P_X}}(\mathbb{I}_X,\mathbb{F}_X)\cong \dR\hH om_{\pi_{P_S}}(\mathbb{I}_S,\mathbb{F}_S)\oplus 
\dR\hH om_{\pi_{P_S}}(\mathbb{F}_S,\mathbb{F}_S)\boxtimes (T_T\oplus \oO_T[-1]).
\end{align}
Combining \eqref{equ on dist tri1} and \eqref{equ on iso on rhomIf}, we obtain 
$$\eE xt^1_{\pi_{P_X}}(\mathbb{I}_X,\mathbb{I}_X)_0\cong \eE xt^0_{\pi_{P_S}}(\mathbb{I}_S,\mathbb{F}_S)\oplus T_T, $$
and an exact sequence 
\begin{align}\label{equ on exa seq}
0\to \eE xt^1_{\pi_{P_S}}(\mathbb{I}_S,\mathbb{F}_S)\oplus \eE xt^1_{\pi_{P_S}}(\mathbb{F}_S,\mathbb{F}_S)\boxtimes T_T\oplus 
\eE xt^0_{\pi_{P_S}}(\mathbb{F}_S,\mathbb{F}_S)\boxtimes\oO_T \to 
\eE xt^2_{\pi_{P_X}}(\mathbb{I}_X,\mathbb{I}_X)_0 \to \cdots. \end{align}
We claim that the second arrow above is an isomorphism, which can be done by a dimension counting.  
In fact, let $I=(\oO_S\to F)\in P^t_{n}(S,\beta)$, the cohomology of the distinguished triangle 
$$\RHom_S(F,F)\to \RHom_S(\oO_S,F)\to \RHom_S(I,F)$$
implies that $\Ext^i_S(I,F)=0$ for $i\geqslant 2$. 
In Setting \ref{setting}, we know $\ext^0_S(I,F)=\beta^2+n+1$, therefore 
$$\ext^1_S(I,F)=1. $$
As $F$ is stable, we have 
$$\ext^2_S(F,F)=\ext^0_S(F,F)=1, \quad \ext^1_S(F,F)=\beta^2+2. $$ 
So the rank of the second term of \eqref{equ on exa seq} is $2\beta^2+6$. One can easily check the rank of the third term in \eqref{equ on exa seq} 
is also $2\beta^2+6$ by Riemann-Roch formula and first condition of Setting \ref{setting}. 
To sum up, we get an isomorphism:
$$\eE xt^1_{\pi_{P_S}}(\mathbb{I}_S,\mathbb{F}_S)\oplus \eE xt^1_{\pi_{P_S}}(\mathbb{F}_S,\mathbb{F}_S)\boxtimes T_T\oplus 
\eE xt^0_{\pi_{P_S}}(\mathbb{F}_S,\mathbb{F}_S)\boxtimes\oO_T \cong 
\eE xt^2_{\pi_{P_X}}(\mathbb{I}_X,\mathbb{I}_X)_0. $$
As in \cite[Prop.~4.7]{CMT2}, one can show the decomposition in the LHS is also with respect to the Serre duality pairing on $\eE xt^2_{\pi_{P_X}}(\mathbb{I}_X,\mathbb{I}_X)_0$. The our claim follows from Lemmata \ref{lem on indep of cosec} and \ref{lem on compu of half euler class}.
\end{proof}

\subsection{Thom-Porteous formula}
As our insertion \eqref{equ on pri ins} depends only on the fundamental cycle of the universal sheaf, it is useful to know 
the pushforward of the virtual class \eqref{vir class StimesT} under the forgetful map. 
In this section, let $\beta \in H_2(S,\BZ)$ be an irreducible curve class, then $P^t_{n}(X,\beta)$ is independent of the choice of $t>\frac{n}{\omega\cdot \beta}$ \cite[Prop.~1.12]{CT1},
so we can set $t\to \infty$ and work with PT stability. 
Consider the forgetful  map
\[ f : P_n(S,\beta) \to M_n(S,\beta), \quad (\oO_S\to F)\mapsto F. \]
Recall that $P_n(S,\beta)$ is smooth of dimension $\beta^2 + n + 1$ and $M_n(S,\beta)$ is smooth of dimension $\beta^2 + 2$.
The image of $f$ in $M_n(S,\beta)$ is the locus
\begin{equation} \big\{ F \in M_n(S,\beta)\,|\,h^0(F) \geqslant 1 \big\}, \label{x} \end{equation}
where surjectivity follows since $\beta$ is irreducible and $F$ is pure, so any non-zero section $s \in H^0(S,F)$ must have zero-dimensional cokernel.
The expected dimension of sections is $\chi(F) = n$,
so the image is everything if $n=1$, a divisor if $n=0$ and a codimension $2$ cycle if $n=-1$.

Let $\mathbb{F}_S$ be a (twisted) universal sheaf on $M_n(S,\beta) \times S$.
If $n=1$ (or more generally, there exists a $K$-theory class pairing with $1$ with a sheaf parametrized by $M_n(S,\beta)$)
the twisted sheaf can be taken to be an actual sheaf.
For us here the difference will not matter, since we are only interested in the Chern character of the universal sheaf,
which can also be easily defined in the twisted case. We refer to \cite{Markman} for a discussion.
%
%Let $\BF$ be a universal sheaf on $M_0(S,\beta) \times S$, which in general has to be viewed here as a twisted sheaf.
%For Chern characters of twisted sheaves, see \cite{Markman}.
%(what we say below also works when using twisted a universal sheaf which always exists) and 

Let $\pi_{M}: M_n(S,\beta) \times S\to M_n(S,\beta)$ be the projection. 
We resolve the complex
$\dR \pi_{M\ast}(\mathbb{F}_S)$ by a $2$-term complex of vector bundles: 
$$\dR \pi_{M\ast}(\mathbb{F}_S)\cong (E_0 \xrightarrow{\sigma} E_1). $$
Then \eqref{x} is the \textit{degeneracy locus}
$$D_1(\sigma) = \big\{ x \in M_n(S,\beta)\,|\, \dim_{\mathbb{C}} \ker(\sigma(x)) \geqslant 1 \big\}. $$
By the \textit{Thom-Porteous formula} \cite[\S14.4]{Ful}
(see \cite[Prop.~1]{GT} for a modern treatment and observe that $P_n(S,\beta)$ is precisely what is called $\tilde{D}_1(\sigma)$ there), we get the following:
\begin{prop}\label{deg loci}
\begin{equation*}f_{\ast} [P_n(S,\beta)] =c_{1-n}(-\dR \pi_{M\ast}(\mathbb{F}_S))\cap [M_n(S,\beta)]. \end{equation*}
\end{prop}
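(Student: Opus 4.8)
The plan is to realize $P_n(S,\beta)$ as the Kempf--Laksov resolution of the first degeneracy locus of $\sigma$ and then quote the Thom--Porteous pushforward formula, reading off the index and the $K$-theory class at the end.

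First I would fix the two-term resolution $\dR\pi_{M\ast}(\mathbb{F}_S)\cong(E_0\xrightarrow{\sigma}E_1)$ and record its numerics. Since $\mathbb{F}_S$ is flat over $M_n(S,\beta)$ and each $F$ is a one-dimensional sheaf on the surface $S$, derived base change identifies the restricted complex $(E_0|_F\xrightarrow{\sigma(F)}E_1|_F)$ with $\dR\Gamma(S,F)$, concentrated in degrees $0,1$; hence $\ker\sigma(F)=H^0(S,F)$ and $\Cok\sigma(F)=H^1(S,F)$ for every $F$, while $H^{\geqslant 2}(S,F)=0$. Taking ranks gives $\rank E_0-\rank E_1=\chi(F)=n$, so the expected corank-one locus $D_1(\sigma)=\{\dim\ker\geqslant 1\}$ has codimension $(\rank E_1-\rank E_0)+1=1-n$, matching the codimension of the image \eqref{x} already computed.

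Next I would identify $P_n(S,\beta)$, scheme-theoretically, with the resolved degeneracy locus $\tilde D_1(\sigma)=\{(x,[s])\in\mathbb{P}(E_0):\sigma(x)(s)=0\}$, under which $f$ becomes the structural projection $q\colon\tilde D_1(\sigma)\to M_n(S,\beta)$. Indeed $\tilde D_1(\sigma)$ is the zero locus in $\mathbb{P}(E_0)$ of the composite $\mathcal{O}(-1)\hookrightarrow q^{\ast}E_0\xrightarrow{q^{\ast}\sigma}q^{\ast}E_1$, i.e.\ of a section of $q^{\ast}E_1\otimes\mathcal{O}(1)$, whose expected dimension is $(\beta^2+2)+(\rank E_0-1)-\rank E_1=\beta^2+n+1=\dim P_n(S,\beta)$. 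A point of $\tilde D_1(\sigma)$ over $F$ is a line $\mathbb{C}\cdot s\subseteq\ker\sigma(F)=H^0(S,F)$, which is precisely (up to the $\mathbb{C}^{\ast}$-automorphisms of the stable sheaf $F$) the datum of a stable pair $(F,s)$; here I use the observation recorded after \eqref{x} that for irreducible $\beta$ every nonzero section of the pure sheaf $F$ has zero-dimensional cokernel, so PT-stability is automatic and no open locus is discarded. The universal section $\oO\to\mathbb{F}_S$ on $P_n(S,\beta)$ globalizes the tautological sub-line-bundle $\mathcal{O}(-1)\subseteq q^{\ast}E_0$, giving the isomorphism of schemes.

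Finally I would invoke the Thom--Porteous formula in the form \cite[\S14.4]{Ful}, \cite[Prop.~1]{GT}: for the resolution $q\colon\tilde D_1(\sigma)\to M_n(S,\beta)$ one has $q_{\ast}[\tilde D_1(\sigma)]=c_{\rank E_1-\rank E_0+1}(E_1-E_0)\cap[M_n(S,\beta)]$. Substituting $\rank E_1-\rank E_0+1=1-n$ and, in $K$-theory, $E_1-E_0=-[\dR\pi_{M\ast}\mathbb{F}_S]$ (so that $c_\bullet$ of the virtual bundle is computed as $c(E_1)/c(E_0)$) yields $f_{\ast}[P_n(S,\beta)]=c_{1-n}(-\dR\pi_{M\ast}\mathbb{F}_S)\cap[M_n(S,\beta)]$, as claimed. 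The main obstacle I expect is precisely the scheme-theoretic identification $P_n(S,\beta)\cong\tilde D_1(\sigma)$ together with the check that the locus is cut out in the expected codimension, so that the Thom--Porteous class coincides with the honest pushforward with no excess-intersection correction; this is what forces the appeal to the smoothness and dimensions of $P_n(S,\beta)$ and $M_n(S,\beta)$ from Setting \ref{setting}. A minor technical point is that $\mathbb{F}_S$ may only exist as a twisted universal sheaf, but this is harmless: $\mathbb{P}(E_0)$ and the degeneracy locus are insensitive to the Brauer twist, and only $\ch(\mathbb{F}_S)$, hence $c_\bullet(\dR\pi_{M\ast}\mathbb{F}_S)$, enters the final formula.
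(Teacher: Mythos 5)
Your proposal is correct and takes essentially the same route as the paper: the paper's (very terse) argument likewise resolves $\dR \pi_{M\ast}(\mathbb{F}_S)$ by a two-term complex $(E_0 \xrightarrow{\sigma} E_1)$, observes that $P_n(S,\beta)$ is precisely the resolved degeneracy locus $\tilde{D}_1(\sigma)$ in the sense of \cite{GT}, and quotes the Thom--Porteous formula. The details you supply --- the fibrewise identification of $(E_0\to E_1)$ with $\dR\Gamma(S,F)$, the expected-dimension check ensuring no excess intersection, and the remark that only $\ch(\mathbb{F}_S)$ enters so Brauer twists are harmless --- are exactly the points the paper leaves implicit.
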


%\begin{equation*}f_{\ast} [P_n(S,\beta)] =c_{1-n}(-\dR \pi_{M\ast}(\mathbb{F}_S))\cap [M_n(S,\beta)]. \end{equation*}
We can calculate the right hand side above by Grothendieck-Riemann-Roch formula
\[ \ch( - \dR \pi_{M\ast}(\mathbb{F}_S) ) = - \pi_{M\ast}( \ch(\mathbb{F}_S)\cdot\pi_S^*\td(S )). \]
We obtain the following:
\begin{equation} \label{Pn expressions}
\begin{aligned}
f_{\ast} [P_1(S,\beta) ] & = 1, \\
f_{\ast} [P_0(S,\beta) ] & = -\pi_{M\ast}(\ch_3(\mathbb{F}_S))-2\pi_{M\ast}(\ch_1(\mathbb{F}_S)\pi_S^*\pt), \\
f_{\ast} [P_{-1}(S,\beta) ] & = \frac{1}{2}\left(c_{1}(-\dR
\pi_{M\ast}(\mathbb{F}_S))\right)^2+\pi_{M\ast}(\ch_4(\mathbb{F}_S))+2\pi_{M\ast}(\ch_2(\mathbb{F}_S)\pi_S^*\pt), 
\end{aligned}
\end{equation}
%where $$c_{1}(-\dR \pi_{M\ast}(\mathbb{F}_S))=-\pi_{M\ast}(\ch_3(\mathbb{F}_S))-2\pi_{M\ast}(\ch_1(\mathbb{F}_S)\pi_S^*[\mathrm{pt}]), $$
where we used Poincar\'e duality on the right to identify homology and cohomology and 
$\pt\in H^4(S)$ denotes the point class.  
A small calculation shows that the right hand side is indeed 
independent of the choice of universal family $\mathbb{F}$ (i.e. the formulae stay invariant under replacing $\mathbb{F}$ by $\mathbb{F} \otimes \pi_{M}^*\CL$
for $\CL\in \Pic(M_n(S,\beta))$).
This will be useful later on.
%To sum up, we state the following result: 
%\begin{prop}\label{deg loci}
%\begin{equation*}f_{\ast} [P_n(S,\beta)] =c_{1-n}(-\dR \pi_{M\ast}(\mathbb{F}_S))\cap [M_n(S,\beta)]. \end{equation*}
%Here 
%\[c_{1}(-\dR \pi_{M\ast}(\mathbb{F}_S))=-\pi_{M\ast}(\ch_3(\mathbb{F}_S))-2\pi_{M\ast}(\ch_1(\mathbb{F}_S)\pi_S^*\pt), \]
%\[c_{2}(-\dR \pi_{M\ast}(\mathbb{F}_S))=\frac{1}{2}\left(c_{1}(-\dR
%\pi_{M\ast}(\mathbb{F}_S))\right)^2+\pi_{M\ast}(\ch_4(\mathbb{F}_S))+2\pi_{M\ast}(\ch_2(\mathbb{F}_S)\pi_S^*\pt).  \]
%\end{prop}

\subsection{Genus 0 in irreducible classes}
In this section, we prove Conjecture \ref{conj on DT4/GV} (1), (2) for irreducible curve classes. 
We first recall a result of Fujiki \cite{Fuji} and its generalization in \cite[Cor.~23.17]{GHJ}.
\begin{thm}\label{fujiki result}$($\cite{Fuji}, \cite[Cor.~23.17]{GHJ}$)$
Let $M$ be a hyperk\"ahler variety of dimension $2n$. Assume $\alpha\in H^{4j}(M,\mathbb{C})$ is of type $(2j, 2j)$ on all small deformation of $M$. Then there exists a constant $C(\alpha)\in\mathbb{C}$ depending only on $\alpha$ $($called Fujiki constant of $\alpha$$)$ such that
$$\int_{M}\alpha\cdot\beta^{2n-2j}=C({\alpha})\cdot q_{M}(\beta)^{n-j}, \quad \forall\,\, \beta\in H^2(M, \mathbb{C}), $$
where $q_M: H^2(M, \mathbb{C}) \to \mathbb{C}$ denotes the Beauville-Bogomolov-Fujiki form. 
\end{thm}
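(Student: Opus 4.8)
The plan is to prove that the degree $d:=2n-2j$ homogeneous polynomial
\[ P(\beta):=\int_M \alpha\cup\beta^{2n-2j},\qquad \beta\in V:=H^2(M,\mathbb{C}), \]
equals $C(\alpha)\cdot q_M^{\,n-j}$; note $\deg q_M^{\,n-j}=2(n-j)=d$, so this is at least dimensionally possible. The key structural observation is that $P$ is a \emph{topological} invariant: cup product and integration over $M$ are independent of the complex structure, so we may evaluate $P$ on any small deformation $M_t$ of $M$ and use the Hodge decomposition of $M_t$ to force vanishing. Throughout write $q:=q_M$ and $b_2:=\dim_{\mathbb{C}}V$.

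First I would invoke local Torelli (Beauville--Bogomolov): the period map from the Kuranishi family of $M$ to the period domain $\Omega=\{[\sigma]\in\mathbb{P}(V):q(\sigma)=0,\ q(\sigma,\bar\sigma)>0\}$ is a local isomorphism. Hence the set of period classes $\sigma_t\in V$ (the classes of the holomorphic symplectic forms of the deformations $M_t$) is Zariski dense in the quadric $Q=\{q=0\}\subset V$, and each satisfies $q(\sigma_t)=0$ and $q(\sigma_t,\bar\sigma_t)\neq0$.

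Next comes the Hodge-type computation, which is where the hypothesis on $\alpha$ enters. Fix a period $\sigma=\sigma_t$ and consider the line $\ell=\{\sigma+s\bar\sigma:s\in\mathbb{C}\}$; since $q(\sigma+s\bar\sigma)=2s\,q(\sigma,\bar\sigma)$ vanishes simply at $s=0$, the line $\ell$ is transverse to $Q$ at $\sigma$. Expanding,
\[ P(\sigma+s\bar\sigma)=\sum_{k=0}^{d}\binom{d}{k}s^{\,d-k}\int_M\alpha\cup\sigma^{k}\cup\bar\sigma^{\,d-k}. \]
Evaluated in the complex structure $M_t$, the class $\alpha$ is of type $(2j,2j)$ (this is exactly the standing assumption), $\sigma$ is of type $(2,0)$ and $\bar\sigma$ of type $(0,2)$, so the $k$-th integrand has type $(2j+2k,\,2j+2(d-k))$ and integrates to zero unless this equals $(2n,2n)$, i.e.\ unless $k=n-j$. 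Therefore $P(\sigma+s\bar\sigma)=\binom{d}{n-j}\big(\int_M\alpha\cup\sigma^{\,n-j}\cup\bar\sigma^{\,n-j}\big)\,s^{\,n-j}$, so $P|_\ell$ vanishes to order at least $n-j$ at $\sigma$.

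Finally I would convert this into divisibility. Since $q$ is nondegenerate of rank $b_2\geq3$ it is irreducible, so the polynomial ring on $V$ is a UFD in which $q$ is prime. Write $P=q^{m}R$ with $q\nmid R$; if $P\not\equiv0$ then $R$ is nonzero on a dense open subset of $Q$, which by density meets the set of periods, giving some period $\sigma$ with $R(\sigma)\neq0$. For this $\sigma$ the transverse line $\ell$ satisfies $\mathrm{ord}_{s=0}(P|_\ell)=m$ (as $q|_\ell$ vanishes simply and $R(\sigma)\neq0$), whereas the previous step gives order $\geq n-j$; hence $m\geq n-j$. Combined with $m\leq d/\deg q=n-j$ this forces $m=n-j$ and $R=C(\alpha)$ constant; if instead $P\equiv0$ we set $C(\alpha)=0$. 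Either way $P=C(\alpha)\,q^{n-j}$, with $C(\alpha)$ depending only on $\alpha$ (indeed linearly). The main obstacle is this last density-and-divisibility step: one must ensure local Torelli supplies \emph{enough} periods (Zariski density in the whole quadric) and that $q$ is irreducible, after which the type count does the real work; everything hinges on the hypothesis that $\alpha$ remains of type $(2j,2j)$ on all small deformations, which is precisely what kills all but the $k=n-j$ term.
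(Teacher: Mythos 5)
The paper does not prove this statement; it is quoted verbatim from the cited references (Fujiki, and \cite[Cor.~23.17]{GHJ}), so there is no in-paper argument to compare against. Your proof is correct and is essentially the standard argument from those references: local Torelli gives Zariski density of periods in the irreducible quadric $\{q_M=0\}$, the $(2j,2j)$-hypothesis kills all but the $k=n-j$ term of $P(\sigma+s\bar\sigma)$ so that $P$ vanishes to order $n-j$ along lines transverse to the quadric at periods, and primality of $q_M$ plus the degree count forces $P=C(\alpha)\,q_M^{\,n-j}$.
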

\begin{thm}\label{thm on g=0 conj on prod}
Let $X=S\times T$ and $\beta\in H_2(S,\mathbb{Z})\subseteq H_2(X,\mathbb{Z})$ be an irreducible curve class. Then Conjecture \ref{conj on DT4/GV} (1), (2)
hold.
\end{thm}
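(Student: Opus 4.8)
The plan is to exploit the product structure to push everything down to the $K3$ surface $S$, reduce the computation to tautological integrals on the smooth holomorphic symplectic moduli space $M_n(S,\beta)$, and finally evaluate those by Fujiki's theorem. First I would reduce to PT stability: since $\beta$ is irreducible, $P^t_n(X,\beta)$ is independent of generic $t>n/(\omega\cdot\beta)$ by \cite[Prop.~1.12]{CT1}, so I take $t\to\infty$ and invoke Setting \ref{setting} via Proposition \ref{prop on smoothness}, giving the isomorphism $P_n(X,\beta)\cong P_n(S,\beta)\times T$ with universal sheaf $\mathbb{F}_X=\mathbb{F}_S\boxtimes\oO_{\Delta_T}$ and the forgetful map $f\colon P_n(S,\beta)\to M_n(S,\beta)$. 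The crucial preliminary observation is that $\oO_{\Delta_T}$ has $\ch_0=\ch_1=0$ and $\ch_2=[\Delta_T]$, while $\mathbb{F}_S$ has rank zero, so expanding $\ch(\mathbb{F}_X)=\ch(\mathbb{F}_S)\cdot\ch(\oO_{\Delta_T})$ yields the clean identity $\ch_3(\mathbb{F}_X)=\ch_1(\mathbb{F}_S)\cdot[\Delta_T]$. Because $\ch_1$ of a rank-zero sheaf is unchanged by twisting with a line bundle from the base, $\ch_1(\mathbb{F}_S)=(f\times\id_S)^{\ast}\ch_1(\mathbb{F}_M)$ descends, so every insertion $\tau(\gamma)$ is pulled back along $f\times\id_T$ from an explicit class on $M_n(S,\beta)\times T$. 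Integrating the $T$-factor against $[\Delta_T]$ (Poincar\'e duality) and the $S$-factor against $\ch_1(\mathbb{F}_S)$, a K\"unneth class $a\otimes b$ gives $\overline{\tau}(a\otimes b)=c_M(a)\boxtimes b$, with $c_M(D)=(D\cdot\beta)$ for $D\in H^2(S)$, $c_M(\pt_S)=\theta_M\in H^2(M_n(S,\beta))$ a tautological divisor, and $c_M(1_S)=0$.

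For Conjecture (1) ($n\geq 2$) I would push the virtual class of Theorem \ref{thm on vir clas} forward along $f\times\id_T$. Each summand has the shape $(\text{class on }T)\cdot f_{\ast}\big([P_n(S,\beta)]\cap f^{\ast}\alpha\big)$, which by the projection formula equals $(\text{class on }T)\cdot\big(f_{\ast}[P_n(S,\beta)]\cap\alpha\big)$; but $f_{\ast}[P_n(S,\beta)]\in A_{\beta^2+n+1}(M_n(S,\beta))$ vanishes for dimension reasons, since $\dim M_n(S,\beta)=\beta^2+2<\beta^2+n+1$. As the insertions are pulled back from $M_n(S,\beta)\times T$, all invariants vanish.

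For Conjecture (2) ($n=1$) the degree constraint $\sum_i(m_i-2)=2n+2=4$ restricts the relevant insertions to two classes in $H^4(X)$ or one class in $H^6(X)$, matching Proposition \ref{prop on gw on prod}. Here $f_{\ast}[P_1(S,\beta)]=[M_1(S,\beta)]$ by the Thom--Porteous computation \eqref{Pn expressions}, so writing $M=M_1(S,\beta)$ the pushed-forward virtual class is $(e(T_M)\cap[M])\times[T]-e(T)\cdot(c_{\beta^2}(T_M)\cap[M])$ on $M\times T$. Substituting the K\"unneth forms of $\gamma,\gamma'\in H^4(X)$ and $\alpha\in H^6(X)$ from Proposition \ref{prop on gw on prod} and matching cohomological bidegrees, the first summand produces $\chi(M)\,(D_1\cdot\beta)(D_1'\cdot\beta)\int_T D_2D_2'$ (resp.\ $\chi(M)(\theta_1\cdot\beta)$), where I identify $\chi(M)=\chi(\Hilb^{\beta^2/2+1}(S))=N_0(\beta^2/2)$ using deformation invariance of the Euler characteristic, G\"ottsche's formula, and \eqref{equ on N0}.

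The only remaining contribution, and the main obstacle, is the $A_2A_2'$ term $-24\,A_2A_2'\int_M c_{\beta^2}(T_M)\,\theta_M^2$ coming from the second summand, which must vanish since no such term appears in Proposition \ref{prop on gw on prod}. I would establish this via Fujiki's theorem (Theorem \ref{fujiki result}): because $c_{\beta^2}(T_M)\in H^{2\beta^2}(M)$ stays of type $(\beta^2,\beta^2)$ under deformation, the integral equals $C(c_{\beta^2}(T_M))\cdot q_M(\theta_M)$, and I would then show $q_M(\theta_M)=0$ by noting that under the Mukai isomorphism $v^{\perp}\xrightarrow{\sim}H^2(M,\mathbb{Z})$ (with $v=(0,\beta,1)$, see \cite{Markman}) the class $\theta_M=c_M(\pt_S)$ corresponds to the isotropic vector $(0,0,1)\in v^{\perp}$, so that $q_M(\theta_M)=\langle(0,0,1),(0,0,1)\rangle=0$. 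Hence the spurious term drops out and the result matches Proposition \ref{prop on gw on prod} term by term; integrality is then immediate, as $N_0(\beta^2/2)=\chi(\Hilb^{\beta^2/2+1}(S))\in\mathbb{Z}$ and all intersection numbers are integers.
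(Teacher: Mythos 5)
Your proposal follows essentially the same route as the paper's proof: reduce to PT stability and the product description $P_n(X,\beta)\cong P_n(S,\beta)\times T$, observe that the insertions are pulled back from $M_n(S,\beta)\times T$, kill the $n\geqslant 2$ invariants by pushing forward along $f$ and comparing dimensions, and for $n=1$ use $f_*[P_1(S,\beta)]=[M_1(S,\beta)]$ together with Fujiki's theorem and the isotropy $q_M(\pi_{M*}(\pi_S^*\pt\cdot\ch_1(\mathbb{F}_S)))=0$ to eliminate the $A_2A_2'$ term and reduce to the Euler characteristic $e(M_1(S,\beta))=N_0(\beta^2/2)$. The only cosmetic difference is that you derive the isotropy of $\theta_M$ directly from Markman's Mukai-lattice description and invoke G\"ottsche's formula explicitly, where the paper cites \cite[Thm.~5.8]{COT1} for both points.
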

\begin{proof}
By Proposition \ref{prop on smoothness}, we have a forgetful map 
$$\bar{f}=(f,\id_T): P_{n}(X,\beta)=P_{n}(S,\beta)\times T\to M_{n}(S,\beta)\times T. $$
As our insertion \eqref{equ on pri ins} only involves fundamental cycle of the universal one dimensional sheaf, so it is the pullback $\bar{f}^*$
of a cohomology class from $M_{n}(S,\beta)\times T$. 
 
When $n>1$, we have 
$$\dim_{\mathbb{C}} P_{n}(S,\beta)=\beta^2+n+1>\beta^2+2=\dim_{\mathbb{C}}M_{n}(S,\beta).  $$ 
By Theorem \ref{thm on vir clas} and Proposition \ref{deg loci}, it is easy to see 
$$P_{n,\beta}(\gamma_1,\ldots,\gamma_l)=0, \quad n>1. $$
When $n=1$, we take insertion $\gamma,\gamma'\in H^4(X)$ for example (other cases follow from easier versions of the same argument).
Based on K\"unneth decomposition:
$$H^{4}(X)\cong (H^0(S)\otimes H^4(T))\oplus (H^2(S)\otimes H^2(T))\oplus (H^4(S)\otimes H^0(T)), $$
we write 
\begin{align*}
\gamma &=A_1\cdot 1\otimes \pt+D_1\otimes D_2+A_2\cdot \pt\otimes 1, \\
\gamma' &=A'_1\cdot 1\otimes \pt+D'_1\otimes D'_2+A'_2\cdot \pt\otimes 1. 
\end{align*}
By Eqn.~\eqref{equ of univ sheaf on prod}, the insertion becomes (see also \cite[Proof~of~Thm.~5.8]{COT1}):
\begin{align}\label{equ on pri ins on prod}\tau(\gamma)=(D_1\cdot\beta)\otimes D_2+A_2f^*\pi_{M*}(\pi_S^*\pt\cdot \ch_1(\mathbb{F}_S))\otimes 1, \end{align}
where $\pi_S$, $\pi_{M}$ are projections from $S\times M_n(S,\beta)$ to its factors. Hence
$$\tau(\gamma)\cdot\tau(\gamma')=(D_1\cdot\beta)\cdot (D_1'\cdot\beta)\otimes (D_2\cdot D_2')+A_2A_2'f^*\left(\pi_{M*}\left(\pi_S^*\pt\cdot \ch_1(\mathbb{F}_S)\right)\right)^2\otimes 1+\mathrm{others}, $$
where ``others'' lie in $H^2(P_{1}(S,\beta))\otimes H^2(T)$. 
By Theorem \ref{thm on vir clas}, we get 
\begin{align*}P_{1,\beta}(\gamma,\gamma')&=(D_1\cdot\beta)\, (D_1'\cdot\beta)\,\int_T(D_2\cdot D_2')\int_{P_{1}(S,\beta)}f^*e(T_{M_1(S,\beta)}) \\
& \quad -e(T)A_2A_2' \int_{P_{1}(S,\beta)}f^*\left(c_{\beta^2}(T_{M_1(S,\beta)})\cdot \pi_{M*}\left(\pi_S^*\pt\cdot \ch_1(\mathbb{F}_S)\right)^2\right) 
\\
&=(D_1\cdot\beta)\, (D_1'\cdot\beta)\,\int_T(D_2\cdot D_2')\int_{M_{1}(S,\beta)}e(T_{M_1(S,\beta)}) \\
& \quad -e(T)A_2A_2' \int_{M_{1}(S,\beta)}c_{\beta^2}(T_{M_1(S,\beta)})\cdot \pi_{M*}\left(\pi_S^*\pt\cdot \ch_1(\mathbb{F}_S)\right)^2 \\
&=(D_1\cdot\beta)\, (D_1'\cdot\beta)\,\int_T(D_2\cdot D_2')\,e(M_1(S,\beta)),
\end{align*}
where the second equality follows from Proposition \ref{deg loci}
and the last equality is proved using Fujiki formula (Theorem \ref{fujiki result}) 
and the evaluation
\[ q_M( \pi_{M*}\left(\pi_S^*\pt\cdot \ch_1(\mathbb{F}_S)\right) ) = 0, \]
(which follows for example from \cite[Proof~of~Thm.~5.8]{COT1}).
%as in \cite[Proof~of~Thm.~2.9]{COT1}.
%For an irreducible class $\beta$, we have (ref.~\cite[Cor.~3.5,~pp.~136]{Y2}): $$\chi(M_n(S,\beta))=\chi(M_1(S,\beta)). $$
Conjecture \ref{conj on DT4/GV} (2) then reduces to \cite[Thm.~5.8]{COT1}.
\end{proof}

\subsection{Transport of integrals to the Hilbert schemes}\label{sect on trans}
To compute the stable pair theory on $P_n(S,\beta)$ for $n \leqslant 0$ we will need to
handle more complicated descendent integrals on $M_n(S,\beta)$.
As in \cite[\S 4.4]{COT1} which deals with the $n = 1$ case,
%In case $n = 1$ this was accomplished in \cite{COT1} by using a deformation of $S$ to an elliptic K3 surface and then an explicit Fourier-Mukai transform
%to reduce these integrals to integrals of descendents over the punctual Hilbert schemes of $S$. While this strategy can be made to work also for arbitrary $n$,
%one encounters additional technical difficulties in case $n=0$
%which arise from strictly semi-stable sheaves on the moduli space on elliptic K3 surfaces.
%because for elliptic K3 surfaces one always has strictly semi-stable sheaves.
%because there may only exists a quasi-universal family, i.e. a universal family in the twisted sense.
we use here the general framework of monodromy operators of Markman \cite{Markman} (see also \cite{OUniversality})
to reduce to the Hilbert schemes.

Consider the Mukai lattice, which is the lattice $\Lambda = H^{\ast}(S,\BZ)$ endowed with the Mukai pairing
\[ \langle x , y \rangle := - \int_S x^{\vee} y, \]
where, if we decompose an element $x \in \Lambda$ according to degree as $(r,D,n)$, we have written $x^{\vee} = (r,-D,n)$.
%We will also write
%\[ \rk(x) = r, \quad c_1(x) = D, \quad v_2(x) = n. \]
Given a sheaf or complex $E$ on $S$ the Mukai vector of $E$ is defined by
\[ v(E) = \sqrt{\td_S} \cdot \ch(E) \in \Lambda. \]
Let $M(v)$ be a proper smooth moduli space of stable sheaves on $S$ with Mukai vector $v \in \Lambda$ (where stability is with respect to some fixed polarization).
We assume that there exists a universal family $\BF$ on $M(v) \times S$.
If it does not exists, everything below can be made to work by working with the Chern character $\ch(\BF)$
of a quasi-universal family, see \cite{Markman} or \cite{OUniversality}.
Let $\pi_M, \pi_S$ be the projections to $M(v)$ and $S$.
One has the Mukai morphism 
$$\theta_{\BF} : \Lambda \to H^2(M(v)), $$
\[ \theta_{\BF}(x) = \left[ \pi_{M \ast}( \ch(\BF) \cdot \sqrt{\td_S} \cdot x^{\vee} ) \right]_{\deg = 2}, \]
where $[ - ]_{\deg = k}$ stands for extracting the degree $k$ component
and (as we will also do below) we have suppressed the pullback maps from the projection to $S$.
Define the universal class
\[ u_v = \exp\left( \frac{ \theta_{\BF}(v) }{\langle v,v \rangle} \right) \ch(\BF) \sqrt{\td_S}, \]
which is independent of the choice of universal family $\BF$.
For $x \in \Lambda$, consider the normalized descendents:
\[ B(x) := \pi_{M\ast}( u_v \cdot x^{\vee} ), \]
and let $B_k(x) = [ B(x) ]_{\deg=2k}$ its degree $2k$ component.

\begin{example}
For $v=(1,0,1-d)$, the moduli space becomes the punctual Hilbert scheme: $M(v) = S^{[d]}$. % the Hilbert scheme of $n$ points on $S$.
%\begin{YC} why call it punctual Hilbert scheme ? \end{YC}
Then we have
\[ u_v = \exp\left( \frac{-\delta}{2d-2} \right) \ch( \CI_{\CZ} ) \sqrt{\td_S}, \]
where we let $\delta = \pi_{\ast} \ch_3( \CO_{\CZ} )$ (so that $-2 \delta$ is the class of the locus of non-reduced subschemes).

We define the standard descendents on the Hilbert scheme by
\[ \fG_d(\alpha) = \pi_{\ast}( \pi_S^{\ast}(\alpha) \ch_d(\CO_{\CZ}) ) \in H^{\ast}(S^{[d]}). \]
One obtains that
\begin{align*}
B_1(\pt) & = - \frac{\delta}{2d-2}, \\
B_2(\pt) & = \frac{1}{2} \frac{\delta^2}{(2d-2)^2} - \fG_2(\pt). 
\end{align*}
For a divisor $D \in H^2(S)$, one finds
\begin{align*}
B_1(D) & = \fG_2(D), \\
B_2(D) & = \fG_3(D) - \frac{\delta}{2d-2} \fG_2(D).
\end{align*}
\end{example}
Using the descendents $B_k(x)$, one allows to move between any two moduli spaces of stable sheaves on $S$
just by specifying a Mukai lattice isomorphism $g : \Lambda \to \Lambda$.
We give the details in the case of our interest, 
see \cite{Markman, OUniversality} for the general case.

As before let $\beta \in \Pic(S)$ be an irreducible effective class of square $\beta \cdot \beta = 2d-2$,
and let $n \in \BZ$. We want to connect the moduli spaces
\[ M_n(S,\beta) \,\, \rightsquigarrow \,\, S^{[d]}. \]
Let $\beta = e + (d-1) f$ where $e, f \in H^2(S,\BZ)$ span a hyperbolic lattice: $\BZ e \oplus \BZ f \cong \binom{0\ 1}{1\ 0}$.
We do not require $e,f$ to be effective here.
Define the isomorphism $g : \Lambda \to \Lambda$ by
\begin{align*}
1 \mapsto (0,-e, n ), \quad \pt \mapsto (0,f,0), \quad e \mapsto (1, -nf, 0), \quad f \mapsto (0,0,-1), \quad
g|_{ \{ 1,\pt, e, f \}^{\perp}} = \id.
\end{align*}
One sees that $g$ is an isometry of the Mukai lattice and that
\[ g_{\ast} ( 0, \beta, n) = (1,0, 1-d). \]
Then one has:
\begin{thm}$($Markman \cite{Markman}, reformulation as in \cite[Thm.~4]{OUniversality}$)$ \label{thm:Markman} For any $k_i \geqslant 0$ and $\alpha_i \in H^{\ast}(S)$ and any polynomial $P$,
\[
\int_{ M_n(S,\beta) } P( B_{k_i}(\alpha_i) , c_j( T_{M_n(S,\beta)} ) ) 
=
\int_{ S^{[d]} } P( B_{k_i}(g \alpha_i) , c_j( T_{S^{[d]}}  ) ). 
\]
\end{thm}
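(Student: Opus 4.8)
The plan is to deduce the identity from Markman's theory of monodromy operators on moduli of sheaves on a $K3$ surface, so that the task reduces to verifying that our two moduli spaces satisfy its hypotheses and are matched by the stated isometry $g$. The structural point that makes this work is that the normalized descendents $B_k(x)$ and the Chern classes $c_j(T_{M(v)})$ are both assembled from the universal class $u_v$, which is independent of the choice of universal family; this canonicity is exactly what allows the classes to be transported between different moduli spaces.

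First I would record the relevant Mukai vectors. A one dimensional stable sheaf $F$ with $[F]=\beta$ and $\chi(F)=n$ has Mukai vector $v=(0,\beta,n)$, while $S^{[d]}$ parametrizes ideal sheaves of Mukai vector $(1,0,1-d)$; since $\beta\cdot\beta=2d-2$ one checks that $\langle v,v\rangle=2d-2$ in both cases, so $M_n(S,\beta)$ and $S^{[d]}$ are smooth projective hyperk\"ahler varieties of the same dimension $2d$. Because $\beta$ is irreducible the vector $v$ is primitive, hence $M_n(S,\beta)$ carries an honest universal family and the quasi-universal subtleties do not arise. The isometry $g$ in the statement is tailored precisely so that $g_*(0,\beta,n)=(1,0,1-d)$, which I would confirm directly on the four listed generators.

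The core step is to establish that $\int_{M(v)}P(B_{k_i}(\alpha_i),c_j(T_{M(v)}))$ depends only on the isometry class of the pair $(\Lambda,v)$ together with the insertions $\alpha_i$ read through the Mukai lattice. This rests on two inputs from Markman's framework. The first is deformation invariance: as $(S,v)$ varies in a family keeping $v$ of Hodge type, the spaces $M(v)$ fit into a smooth proper family over which $u_v$ and $T_{M(v)}$ extend, so the integral is locally constant. The second is monodromy invariance: any Mukai lattice isometry $g$ in the monodromy group sending $v$ to $v'$ is realized by a correspondence that carries $u_v$ to $u_{v'}$, hence $B_k(x)$ to $B_k(gx)$, while matching the tangent bundles. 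Combining the two inputs transports the integral on $M(v)$ to the integral on $M(v')=S^{[d]}$ with each insertion $\alpha_i$ replaced by $g\alpha_i$, which is precisely the asserted equality.

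The hard part will be the second input, namely the equivariance $B_k(x)\mapsto B_k(gx)$ under the monodromy correspondence: one must know both that the specific $g$ above lies in (or is generated within) the monodromy group, and that the universal class $u_v$ is natural for such isometries on the nose. Here the exponential normalization $\exp(\theta_{\BF}(v)/\langle v,v\rangle)$ in the definition of $u_v$ is essential, since it is exactly what renders $u_v$ canonical and isometry equivariant. As the statement is Markman's result in the reformulation of \cite{OUniversality}, I would invoke those references for the universality of $u_v$ and the surjectivity of the monodromy representation, having reduced our situation to their setting through the explicit $g$ constructed above.
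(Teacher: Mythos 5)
The paper gives no proof of this statement---it is quoted directly from Markman and the reformulation in \cite[Thm.~4]{OUniversality}---and your outline (deformation invariance plus monodromy equivariance of the canonically normalized class $u_v$, together with the explicit isometry $g$ carrying $(0,\beta,n)$ to $(1,0,1-d)$ of square $2d-2$) is exactly the strategy of those references, to which you also correctly defer for the two hard inputs. One small inaccuracy: primitivity of $v$ (irreducibility of $\beta$) does not by itself guarantee an honest universal family on $M_n(S,\beta)\times S$, but this is harmless since $u_v$, and hence the classes $B_k$, are built from the Chern character of a quasi-universal family, as the paper itself notes.
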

%\begin{YC} n should be d ? \end{YC}

\subsection{Genus 1 in irreducible classes}
Recall the genus $1$ Gopakumar-Vafa invariants (Proposition \ref{prop on gw on prod}).
%\begin{align}\label{equ py formula}\sum_{l\geqslant 0}N_{1}(l)\,q^l=\left(\frac{d}{dq}G_2(q)\right)
%\frac{1}{q} \prod_{n\geqslant 1}\frac{1}{(1-q^n)^{24}} =1+30q+480q^2+5460q^3+\cdots, \end{align}
%where $G_2(q):=-\frac{1}{24}+\sum_{k\geqslant 1} \sum_{d|k} dq^k$ is the  Eisenstein series.
On the stable pair side, we have the following:
\begin{thm} \label{thm on g=1 conj on prod}
Let $\beta\in H_2(S,\mathbb{Z})\subseteq H_2(X,\mathbb{Z})$ be an irreducible curve class. Then for certain choice of orientation, we have  
\begin{align}\label{equ on P0 pro}P_{0,\beta}(\gamma)=e(T)\, N_{1}\left(\frac{\beta^2}{2}\right) \int_{S\times\pt}\gamma. \end{align}
In particular, Conjecture \ref{conj on DT4/GV} (3) holds in this case. 
\end{thm}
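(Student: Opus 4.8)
The plan is to mirror the genus $0$ argument of Theorem \ref{thm on g=0 conj on prod}, but with the Euler-class piece of the virtual class replaced by its lower Chern-class companion, and then to carry the resulting tautological integral to the Hilbert scheme via Markman's operators. Throughout I identify $P_0(X,\beta) \cong P_0(S,\beta) \times T$ and write $M \cneq M_0(S,\beta)$, so that $\dim_{\BC} P_0(S,\beta) = \beta^2 + 1$ while $\dim_{\BC} M = \beta^2 + 2$. The first step is to simplify the virtual class: in Theorem \ref{thm on vir clas} with $n=0$ the leading summand is $([P_0(S,\beta)] \cap f^{\ast} e(T_{M})) \times [T]$, but $e(T_M) \in H^{2\dim M}(M)$ pulls back into $H^{2(\beta^2+2)}(P_0(S,\beta)) = 0$ since this degree exceeds the real dimension of $P_0(S,\beta)$. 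Hence that term vanishes and
\begin{equation*}
[P_0(X,\beta)]^{\vir} = - e(T)\,\big([P_0(S,\beta)] \cap f^{\ast} c_{\beta^2}(T_{M})\big),
\end{equation*}
a class of complex dimension $1$, where $e(T) = \int_T e(T_T) = 24$.

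Next I would compute the insertion. Using \eqref{equ of univ sheaf on prod}, $\mathbb{F}_X = \mathbb{F}_S \boxtimes \oO_{\Delta_T}$, together with $\ch(\oO_{\Delta_T}) = [\Delta_T] + \cdots$ whose components $\ch_0,\ch_1,\ch_3$ vanish (as $c_1(T)=0$), gives $\ch_3(\mathbb{F}_X) = \ch_1(\mathbb{F}_S)\cdot[\Delta_T]$. Extracting the $H^2(P_0(S,\beta)) \otimes H^0(T)$ Künneth component of $\tau(\gamma) = \pi_{P\ast}(\pi_X^{\ast}\gamma \cup \ch_3(\mathbb{F}_X))$, only the summand $A_2\,(\pt \otimes 1)$ of $\gamma$ and the $1 \otimes \pt$ part of $[\Delta_T]$ survive the fibre integrals over $S$ and over the $X$-factor $T$, yielding $[\tau(\gamma)]_{H^2 \otimes H^0} = A_2\, f^{\ast} D_\beta$, where $D_\beta \cneq \pi_{M\ast}(\pi_S^{\ast}\pt \cdot \ch_1(\mathbb{F}_S)) \in H^2(M)$ and $A_2 = \int_{S \times \pt}\gamma$. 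Since $e(T_T)$ saturates the $T$-direction, only this component pairs against the virtual class, giving
\begin{equation*}
P_{0,\beta}(\gamma) = - e(T)\, A_2 \int_{P_0(S,\beta)} f^{\ast}\!\big(D_\beta \cdot c_{\beta^2}(T_{M})\big).
\end{equation*}

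It then remains to evaluate this tautological integral. By the Thom--Porteous formula (Proposition \ref{deg loci}) with $c_{1-n} = c_1$ and the projection formula, $\int_{P_0(S,\beta)} f^{\ast}(D_\beta\, c_{\beta^2}(T_M)) = \int_M D_\beta\, c_{\beta^2}(T_M)\, c_1(-\dR\pi_{M\ast}\mathbb{F}_S)$, which by \eqref{Pn expressions} is a polynomial in $D_\beta$, the descendent classes $\pi_{M\ast}(\ch_k(\mathbb{F}_S)\cdot\pi_S^{\ast}(-))$, and the Chern classes of $T_M$. Rewriting these in terms of the normalized descendents $B_k(x)$ and transporting along the Mukai isometry $g$ via Theorem \ref{thm:Markman} to $S^{[d]}$ with $d = \tfrac{\beta^2}{2}+1$, the integral becomes a universal tautological integral on the Hilbert scheme, depending on $(S,\beta)$ only through $d$. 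I would then evaluate it using the computations of \cite{COT1}, identify the result with $N_1(\tfrac{\beta^2}{2})$ from the series \eqref{equ on N1}, and fix the overall sign by the choice of orientation; combined with $e(T)=24$ and $\int_{S\times\pt}\gamma = A_2$ this produces \eqref{equ on P0 pro}. Conjecture \ref{conj on DT4/GV} (3) then follows because $n_{1,\beta}(\gamma) = 24\, A_2\, N_1(\tfrac{\beta^2}{2})$ by Proposition \ref{prop on gw on prod}.

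The main obstacle is the final step: pinning down the precise universal integral on $S^{[d]}$ and showing it reproduces the quasimodular series $q\tfrac{d}{dq}G_2$ that enters $N_1$. Carefully tracking the descendent classes $B_k(x)$ under $g$ and reconciling the signs and orientation across the isometry is the delicate part, whereas the geometric inputs — the vanishing of the Euler term and the shape of $\tau(\gamma)$ — are comparatively routine once Theorem \ref{thm on vir clas} and the Thom--Porteous expressions \eqref{Pn expressions} are in hand.
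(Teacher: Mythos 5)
Your proposal follows essentially the same route as the paper's proof: the Euler-class term of Theorem \ref{thm on vir clas} drops out for degree reasons, the insertion reduces to $A_2\, f^{\ast}\pi_{M\ast}(\pi_S^{\ast}\pt\cdot\ch_1(\mathbb{F}_S))$, Thom--Porteous pushes the integral to $M_0(S,\beta)$, and Markman's Theorem \ref{thm:Markman} transports it to $S^{[d]}$ where the paper likewise invokes \cite{COT1} (the $k=1$ case of Thm.~4.2 there, giving $-((-e+f)\cdot f)\,C(c_{2d-2}(T_{S^{[d]}})) = N_1(d-1)$) to conclude. The final evaluation you defer is exactly the citation the paper uses, so the argument is correct and matches.
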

%Let $\BF$ be a universal sheaf on $M_0(S,\beta) \times S$, which in general has to be viewed here as a twisted sheaf.
%For Chern characters of twisted sheaves, see \cite{Markman}.
\begin{proof}%[Proof of Theorem~\ref{thm on g=1 conj on prod}]
The strategy is as follows: First we write our stable pair invariants as integrals on the moduli spaces $M_0(S,\beta)$,
then express the integrand in terms of the classes $B_k(x)$ and then use
Markman's Theorem~\ref{thm:Markman} to reduce to an integral over the Hilbert scheme, which is known by the results of \cite{COT1}.

By Eqn.~\eqref{equ on pri ins on prod} and Theorem \ref{thm on vir clas} (choose the inverse orientation there), we have 
$$P_{0,\beta}(\gamma)=e(T)\, \int_{S\times\pt}\gamma\cdot\int_{P_{0}(S,\beta)}f^*\left(c_{\beta^2}(T_{M_0(S,\beta)})\cdot \pi_{M*}\left(\pi_S^*(\pt)\cdot \ch_1(\mathbb{F}_S)\right)\right).$$
Using Proposition \ref{deg loci}, we find
\[ P_{0,\beta}(\gamma)=e(T)\,
\int_{S\times\pt}\gamma\cdot\int_{M_{0}(S,\beta)}c_{\beta^2}(T_{M_0(S,\beta)})\cdot c_{1}(-\dR \pi_{M\ast}(\mathbb{F}_S))\cdot  
\pi_{M*}\left(\ch_1(\mathbb{F}_S)\cdot \pi_S^*(\pt)\right). \]
A calculation shows that we have
\[ B_1(\pt) = \pi_{\ast}( \ch_1(\BF_S)\,\pi_S^{\ast}(\pt) ). \]
Moreover, the expressions \eqref{Pn expressions} are
invariant under replacing $\ch(\BF_S)$ by $\ch(\BF_S) \exp( \ell )$ for any line bundle $\ell \in H^2(M_n(S,\beta))$.
Hence we can use $\ch(\BF_S') := \ch(\BF_S) \exp( \theta_{\BF_S}(v) / \langle v,v \rangle )$ which shows that
\begin{align*}
c_{1}(-\dR \pi_{M\ast}(\mathbb{F}_S))
& = -\pi_{M\ast}(\ch_3(\mathbb{F}_S'))-2\pi_{M\ast}(\ch_1(\mathbb{F}_S')\,\pi_S^*\pt) \\
& = - B_1\left( \sqrt{\td_S}^{-1} \right) - 2 B_1( \pt ) \\
& = - B_1( 1 + \pt ).
\end{align*}
We obtain that:
\begin{align*}
& \int_{M_{0}(S,\beta)}c_{2d-2}(T_{M_0(S,\beta)})\cdot c_{1}(-\dR \pi_{M\ast}(\mathbb{F}_S))\cdot  
\pi_{M*}\left(\ch_1(\mathbb{F}_S)\cdot \pi_S^*\pt\right) \\
= & - \int_{M_{0}(S,\beta)}c_{2d-2}(T_{M_0(S,\beta)}) B_1(1 + \pt) B_1(\pt) \\
= & - \int_{S^{[d]}}c_{2d-2}(T_{S^{[d]}}) B_1(-e+f) B_1(f) \\
= & - \int_{S^{[d]}}c_{2d-2}(T_{S^{[d]}}) \fG_2(-e+f) \fG_2(f) \\
= & - ((-e+f) \cdot f) C( c_{2d-2}(T_{S^{[d]}}) ) \\
= & N_1(d-1),
\end{align*}
where we used the $k=1$ case of \cite[Thm.~4.2]{COT1} in the last step. 
\end{proof}

\subsection{Genus 2 in irreducible classes}
Let $\beta_d \in H_2(S,\BZ) \subseteq H_2(X,\BZ)$ be an irreducible curve class of square $\beta_d^2 = 2d-2$.
Below, we use similar method to compute stable pair invariants $P_{-1,\beta_d}$ on $X$ for all $d$.
\begin{thm}\label{thm on P_-1} For certain choice of orientation, we have  
\begin{align*} \sum_{d \in\mathbb{Z}} P_{-1,\beta_d}\, q^d &= 
\left(\prod_{n \geqslant 1} (1-q^n)^{-24}\right) \left(24q \frac{d}{dq} G_2(q) - 24G_2(q) - 1 \right) \\
&= 72 q^2 + 1920 q^3 + 28440 q^4 + 305280 q^5 + 2639760 q^6 + 
 19450368 q^7 +  \cdots .
\end{align*}
In particular, Conjecture \ref{conj on DT4/GV} (4) holds in this case. 
\end{thm}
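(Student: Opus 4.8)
The plan is to run the strategy of the genus $1$ computation (Theorem \ref{thm on g=1 conj on prod}) in the genus $2$ setting: reduce $P_{-1,\beta_d}$ to a tautological integral on $M_{-1}(S,\beta)$ via the virtual class and Thom--Porteous, re-express the integrand in Markman's normalized descendents $B_k$, transport to the punctual Hilbert scheme $S^{[d]}$ by Theorem \ref{thm:Markman}, and evaluate using \cite{COT1}. By deformation invariance and the global Torelli theorem it suffices to treat an irreducible class $\beta = \beta_d$ with $\beta^2 = 2d-2$, so that Setting \ref{setting} holds and we may work at $t \to \infty$ (PT stability).

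First I would specialize Theorem \ref{thm on vir clas} to $n=-1$. Since $\dim_{\BC} M_{-1}(S,\beta) = \beta^2 + 2 = 2d$ while $\dim_{\BC} P_{-1}(S,\beta) = \beta^2 = 2d-2$, the class $f^{\ast} e(T_{M_{-1}(S,\beta)}) = f^{\ast} c_{2d}(T_{M_{-1}(S,\beta)})$ sits in cohomological degree exceeding $2\dim_{\BC} P_{-1}(S,\beta)$ and hence vanishes; thus the first term of the virtual class drops out. Integrating $1$ against the surviving term and using $\int_T e(T) = \chi(T) = 24$, I obtain, for a suitable orientation,
\[ P_{-1,\beta_d} = -24 \int_{P_{-1}(S,\beta)} f^{\ast} c_{\beta^2}(T_{M_{-1}(S,\beta)}). \]
Pushing forward by $f$ and applying Proposition \ref{deg loci} in the form \eqref{Pn expressions} (so that $f_{\ast}[P_{-1}(S,\beta)] = c_2(-\dR\pi_{M\ast}\BF_S) \cap [M_{-1}(S,\beta)]$), this becomes
\[ P_{-1,\beta_d} = -24\int_{M_{-1}(S,\beta)} c_{2d-2}(T_{M_{-1}(S,\beta)})\, c_2(-\dR\pi_{M\ast}\BF_S). \]

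Next I would rewrite $c_2(-\dR\pi_{M\ast}\BF_S)$ in Markman's classes. After the harmless twist $\ch(\BF_S) \rightsquigarrow \ch(\BF_S)\exp(\theta_{\BF_S}(v)/\langle v,v\rangle)$, which leaves \eqref{Pn expressions} unchanged, one has $\ch(\dR\pi_{M\ast}\BF_S) = B(1+\pt)$ and hence $c_2(-\dR\pi_{M\ast}\BF_S) = \tfrac12 B_1(1+\pt)^2 + B_2(1+\pt)$. Applying Theorem \ref{thm:Markman} with the isometry $g$, for which now $g(1+\pt) = (0,-e+f,-1)$ since $n=-1$ (this is exactly where the argument departs from the genus $1$ case, where $g(1+\pt)$ had no $H^4(S)$-component), transports the integral to $S^{[d]}$. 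Expanding $B_1, B_2$ of $(0,-e+f,-1) = (-e+f) - \pt$ in the standard descendents $\fG_k$ and $\delta$, the $\delta$-dependent terms cancel and I am left with
\[ P_{-1,\beta_d} = -24\int_{S^{[d]}} c_{2d-2}(T_{S^{[d]}})\left(\tfrac12\fG_2(-e+f)^2 + \fG_3(-e+f) + \fG_2(\pt)\right). \]

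The final and hardest step is to evaluate these three families of tautological integrals over $S^{[d]}$ and to check that, summed against $q^d$, they assemble into the asserted quasimodular series. Unlike the genus $1$ case, where only the bilinear pairing $\int c_{2d-2}\,\fG_2(D)\fG_2(D')$ (a single Fujiki-type constant) appeared, here I must separately control $\int c_{2d-2}\,\fG_2(D)^2$, $\int c_{2d-2}\,\fG_3(D)$ and $\int c_{2d-2}\,\fG_2(\pt)$; these are precisely the descendent integrals computed in \cite{COT1} in the course of evaluating the genus $2$ Gopakumar--Vafa invariant $n_{2,\beta_d} = N_2(d-1)$ (using \cite[Thm.~4.2]{COT1} together with the genus $2$ evaluations there). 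I expect the recombination of these contributions into $\bigl(\prod_{n\geqslant 1}(1-q^n)^{-24}\bigr)\bigl(24\,q\tfrac{d}{dq}G_2 - 24 G_2 - 1\bigr)$ — equivalently the shift identity $\sum_d N_2(d-1)q^d = q\sum_{l} N_2(l)q^l$ applied to \eqref{equ on N2} — to be the main obstacle, the remainder being a direct transcription of the genus $1$ argument. This simultaneously establishes Conjecture \ref{conj on DT4/GV} (4) for $X = S\times T$.
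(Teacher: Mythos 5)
Your proposal follows the paper's proof essentially step for step: reduce to an integral over $M_{-1}(S,\beta)$ via Theorem \ref{thm on vir clas} (only the $c_{\beta^2}$-term survives for degree reasons) and Proposition \ref{deg loci}, rewrite $c_2(-\dR\pi_{M\ast}\BF_S)=\tfrac12 B_1(1+\pt)^2+B_2(1+\pt)$, transport to $S^{[d]}$ via Markman's theorem with $g(1+\pt)=-e+f-\pt$, and evaluate using \cite[Thm.~4.2, Prop.~4.6]{COT1}; your observation that the $\delta$-terms cancel identically at the level of cohomology classes is a minor simplification of the paper's cancellation via Fujiki constants, and the "recombination" you flag as the main obstacle is handled in the paper simply by $\int_{S^{[d]}}c_{2d-2}(T_{S^{[d]}})\fG_3(-e+f)=0$, the shift $\sum_d N_1(d-1)q^d=\prod_n(1-q^n)^{-24}\,q\tfrac{d}{dq}G_2$, and the generating series for $\int_{S^{[d]}}c_{2d-2}(T_{S^{[d]}})\fG_2(\pt)$. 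The approach is correct and the same as the paper's.
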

%We compute the stable pair invariants $P_{-1,\beta}$ for irreducible curve classes. 
%Using Fourier-Mukai transform introduced in Section \ref{sect on FM}, we can (\textbf{TBC})

%\begin{thm} 
%Let $\beta\in H_2(S,\mathbb{Z})\subseteq H_2(X,\mathbb{Z})$ be an irreducible ample class with $\beta^2=2d-2$. For a certain choice of orientation, we have  
%\begin{align*}\label{equ on P-1 pro}
%P_{-1,\beta}=e(T) \left( N_1(d-1) - \int_{S^{[d]}} c_{2d-2}( T_{S^{[d]}} ) \fG_2(\pt) \right).
%\end{align*}
%\end{thm}

%Since by \cite[Prop.~A.5]{COT1} we have
%\[ \sum_{d \geq 0} q^d \int_{S^{[d]}} c_{2d-2}( T_{S^{[d]}} ) \fG_2(\pt) = \prod_{n = 1} (1-q^n)^{-24} \left( G_2(q) + \frac{1}{24} \right) \]
\begin{proof}
As in the genus $1$ case, by Theorem \ref{thm on vir clas} and Proposition \ref{deg loci} we have:
\[
P_{-1,\beta}=-e(T)\int_{M_{-1}(S,\beta)}c_{2d-2}(T_{M_{-1}(S,\beta)})\cdot c_{2}(-\dR \pi_{M\ast}(\mathbb{F}_S)).
\]
With the same discussion as before one gets:
\[
c_{2}(-\dR \pi_{M\ast}(\mathbb{F}_S)) = \frac{1}{2} B_1(1 + \pt)^2 + B_2(1 + \pt).
\]
Hence applying Markman's Theorem~\ref{thm:Markman}, we conclude
\begin{align*}
& \int_{M_{-1}(S,\beta)}c_{2d-2}(T_{M_{-1}(S,\beta)})\cdot c_{2}(-\dR \pi_{M\ast}(\mathbb{F}_S))  \\
= & \int_{M_{-1}(S,\beta)}c_{2d-2}(T_{M_{-1}(S,\beta)})\cdot \left( \frac{1}{2} B_1(1 + \pt)^2 + B_2(1 + \pt) \right) \\
= & \int_{S^{[d]}} c_{2d-2}( T_{S^{[d]}} ) \left( \frac{1}{2} B_1(-e+f - \pt)^2 + B_2(-e+f-\pt ) \right) \\
= & \int_{S^{[d]}} c_{2d-2}( T_{S^{[d]}} ) 
\frac{1}{2} \left[ \fG_2(-e+f) + \frac{\delta}{2d-2} \right]^2 \\
& \ + \int_{S^{[d]}} c_{2d-2}( T_{S^{[d]}} ) \left( \fG_3(-e+f) - \frac{\delta}{2d-2} \fG_2(-e+f) - \frac{1}{2} \frac{\delta^2}{(2d-2)^2} + \fG_2(\pt) \right) \\
= & \frac{1}{2} \left( (-e+f)^2 + \frac{ \delta \cdot \delta}{(2d-2)^2} \right) N_1(d-1)
- \frac{1}{2} \frac{ \delta \cdot \delta }{(2d-2)^2} N_1(d-1) + \int_{S^{[d]}} c_{2d-2}( T_{S^{[d]}} ) \fG_2(\pt) \\
= & - N_1(d-1) + \int_{S^{[d]}} c_{2d-2}( T_{S^{[d]}} ) \fG_2(\pt).
\end{align*}
Thus we conclude that
\begin{align*}\label{equ on P-1 pro}
P_{-1,\beta}=e(T) \left( N_1(d-1) - \int_{S^{[d]}} c_{2d-2}( T_{S^{[d]}} ) \fG_2(\pt) \right).
\end{align*}
The desired formula now follows by the evaluation given in \cite[Prop.~4.6]{COT1}:
\[ \sum_{d \geqslant 0} q^d \int_{S^{[d]}} c_{2d-2}( T_{S^{[d]}} ) \fG_2(\pt) = \prod_{n = 1}^{\infty} (1-q^n)^{-24} \left( G_2(q) + \frac{1}{24} \right).  \]
Finally, comparing with Proposition \ref{prop on gw on prod}, we are done. 
\end{proof}
\begin{rmk}\label{rmk on pri g=0}
By the global Torelli theorem, primitive curve classes on $K3$ surfaces can be deformed to irreducible curve classes. Combining Theorem \ref{thm on g=0 conj on prod}, Theorem \ref{thm on g=1 conj on prod}, Theorem \ref{thm on P_-1}, we know 
Conjecture \ref{conj on DT4/GV} also holds for primitive curve classes $\beta\in H_2(S)\subseteq H_2(X)$. \end{rmk}

\subsection{Genus 1:~multiple fiber classes of elliptic fibrations}
Let $X=E\times E\times T$ be the product two copies of an elliptic curve $E$ and a $K3$ surface $T$. It gives the trivial elliptic fibration 
\begin{align}\label{equ on trivial ell fib}\pi: X\to Y:=E\times T. \end{align} 
%where all fibers are the same smooth elliptic curve. 
%Denote  
%\begin{align}\label{equ on gamm}\gamma&:=A_1\otimes \pt+D_1\otimes D_2+ \pt\otimes A_2\\  \nonumber
%&\quad \in H^0(E\times E)\otimes H^4(T)\oplus H^2(E\times E)\otimes H^2(T)\oplus H^4(E\times E)\otimes H^0(T). 
%\end{align}
For multiple fiber classes of $\pi$ \eqref{equ on trivial ell fib}, we have the following closed evaluation:
\begin{thm}\label{thm1 on g=1 of multiple fiber}
%Let $Y=E\times T$, $X=E\times Y$, $\gamma\in H^4(X)$ be as above and $t>0$. Then 
Let $t > 0$ and $\gamma \in H^4(X)$. For certain choice of orientation, we have 
\begin{align}\label{equ on P0 multiple fiber}
\sum_{r\geqslant 0}P^t_{0,r[E]}(\gamma)\,q^r=24\,\left(\int_{E \times E \times \pt} \gamma\right)\cdot\sum_{m\geqslant 1} \sum_{n | m}n^2q^m. \end{align}
%where $A = $. % \cup \pi_T^{\ast}(\pt)$ where $\pi_S : X \to S$ is the projection.
\end{thm}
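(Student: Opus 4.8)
The plan is to follow the three–step strategy indicated for the companion Theorem \ref{thm2 on g=1 of multiple fiber}: realize $P^t_0(X,r[E])$ as a Hilbert scheme of points on the base $Y=E\times T$, identify the reduced virtual classes, and then evaluate the resulting descendent integral by degeneration.

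\textbf{Step 1 (the isomorphism).} Write $X=E_1\times E_2\times T$ and let $\pi\colon X\to Y:=E_1\times T$ be the projection forgetting $E_2$, so that $[E]=[E_2]$ is the fiber class and $\pi^{-1}(Z)=Z\times E_2$ for $Z\subset Y$. Given $Z\in\Hilb^r(Y)$ with ideal sheaf $I_Z$, flatness of $\pi$ yields $0\to\pi^*I_Z\to\oO_X\to\oO_Z\boxtimes\oO_{E_2}\to0$; since $\chi(\oO_{E_2})=0$ one has $[\oO_Z\boxtimes\oO_{E_2}]=r[E]$ and $\chi=0$, and $\oO_Z\boxtimes\oO_{E_2}$ is pure of dimension one, so $(\oO_X\to\oO_Z\boxtimes\oO_{E_2})$ is a $Z_t$-stable pair for every $t>0$ (it is simultaneously PT- and JS-stable). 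Conversely, for any $(\oO_X\to F)\in P^t_0(X,r[E])$, purity of $F$ (forced since a zero-dimensional subsheaf would have slope $+\infty\not<t$) together with the intersection identities $[E]\cdot\pi_{E_1}^{\ast}\omega_{E_1}=[E]\cdot\pi_T^{\ast}\omega_T=0$ confines $\Supp(F)$ to $\pi^{-1}$ of a length-$r$ subscheme; a fiberwise analysis then shows $F$ is degree zero on each fiber $E_2$, and the section forces this restriction to be $\oO_{E_2}$, whence $F\cong\oO_Z\boxtimes\oO_{E_2}$. This gives an isomorphism $\Phi\colon\Hilb^r(Y)\xrightarrow{\sim}P^t_0(X,r[E])$, independent of $t>0$.

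\textbf{Step 2 (virtual classes).} As an object of $D^b\Coh(X)$ the pair $(\oO_X\to\oO_Z\boxtimes\oO_{E_2})$ is quasi-isomorphic to $\pi^*I_Z$, so its deformation theory is governed by $\RHom_X(\pi^*I_Z,\pi^*I_Z)_0$. Since $X=Y\times E_2$ one has $\dR\pi_{\ast}\oO_X\cong\oO_Y\oplus\oO_Y[-1]$, and flat base change with the projection formula give
\[
\RHom_X(\pi^*I_Z,\pi^*I_Z)_0\cong\RHom_Y(I_Z,I_Z)_0\oplus\RHom_Y(I_Z,I_Z)_0[-1].
\]
The holomorphic symplectic form of $X$ is $\sigma=dz_1\wedge dz_2+\sigma_T$, and under this splitting the Kiem--Park isotropic cosection on $\Ext^2_X(\pi^*I_Z,\pi^*I_Z)_0$ induced by $\sigma$ matches the reduction datum of the $\sigma_T$-reduced theory on $Y$ (note $H^{2,0}(Y)=\BC\,\sigma_T$). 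The two summands are exchanged by Serre duality, so, exactly as in Theorem \ref{thm on vir clas}, computing the reduced half-Euler class via Lemmata \ref{lem on indep of cosec} and \ref{lem on compu of half euler class} identifies the reduced $\DT_4$ class with the reduced class of points on the Calabi--Yau threefold $Y$: for a suitable orientation
\[
[P^t_0(X,r[E])]^{\vir}=(-1)^{n+1}[\Hilb^r(Y)]^{\vir}\in A_1(\Hilb^r(Y)),
\]
where the right-hand side is the one-dimensional $\sigma_T$-reduced virtual class.

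\textbf{Step 3 (evaluation) and main obstacle.} Under $\Phi$ the insertion $\tau(\gamma)$ becomes a tautological divisor $D_\gamma$ on $\Hilb^r(Y)$, and a Künneth/degree analysis shows that its pairing against the one-dimensional class $[\Hilb^r(Y)]^{\vir}$ sees only the component of $\gamma$ along $\pt_{E_1}\otimes\pt_{E_2}\otimes1$, i.e. $\int_{E\times E\times\pt}\gamma$ (the other components drop out). It then remains to evaluate $\int_{[\Hilb^r(Y)]^{\vir}}D_\gamma$ on the threefold $Y=E_1\times T$. I would compute this by the degeneration method together with the Behrend-function identity for reduced classes \cite{B, OS}: degenerating $T$ and using the product structure isolates the $K3$ factor, producing $\chi(T)=24$, while the elliptic factor $E_1$ contributes the multiple-cover series $\sum_{n\mid m}n^2$, assembling to $P^t_{0,r[E]}(\gamma)=24\,\big(\int_{E\times E\times\pt}\gamma\big)\sum_{n\mid r}n^2$ and hence the stated generating series. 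The hardest part is Step 2: faithfully tracking the Kiem--Park cosection through the product decomposition and fixing the orientation to obtain the precise sign $(-1)^{n+1}$; the reduced Donaldson--Thomas evaluation in Step 3 is the other substantive point, where the degeneration/Behrend-function input of \cite{OS} does the essential work.
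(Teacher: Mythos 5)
Your Steps 1 and 2 coincide with the paper's argument: after using $t$-independence to pass to PT stability, the identification $\pi^{*}\colon \Hilb^r(Y)\cong P_0(X,r[E])$, the splitting $\Ext^2_X(I,I)_0\cong \Ext^2_Y(I_Z,I_Z)_0\oplus \Ext^2_Y(I_Z,I_Z)_0^{\vee}$ obtained from $\dR\pi_{*}\oO_X\cong \oO_Y\oplus K_Y[-1]$, and the comparison of the Kiem--Park cosection with the standard cosection of $\Hilb^r(Y)$ so that the reduced virtual classes agree for a suitable orientation. This is exactly how the paper proceeds.

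The gap is in Step 3. The paper does not evaluate $\int_{[\Hilb^r(E\times T)]^{\vir}}\pi_{M*}\bigl(\ch_3(\oO_{\mathcal{Z}})\,\pi_Y^{*}(\omega\otimes 1)\bigr)$ by degeneration, and ``degenerating $T$'' is not a meaningful operation here: there is no degeneration of the $K3$ factor that isolates $\chi(T)=24$, and the degeneration method enters only in Theorem~\ref{thm2 on g=1 of multiple fiber}, where the rational base $\p^1$ is degenerated into a chain of rational curves precisely in order to reduce to the present trivial-fibration case. The actual mechanism (Proposition~\ref{prop:K3xE calculation}) is to quotient $\Hilb^r(E\times T)$ by the translation action of $E$: one has $[\Hilb^r]^{\vir}=p^{*}[\Hilb^r/E]^{\vir}$ with $[\Hilb^r/E]^{\vir}$ zero-dimensional and arising from a symmetric obstruction theory, so its degree is the Behrend-weighted Euler characteristic $e(\Hilb/E,\nu)$, and it is the evaluation of \cite{OS} that supplies both the factor $24$ and the series $\sum_{d|n}d^2$, while the insertion contributes the remaining factor $n$ via $\pi_{M/E\,*}\ch_3(\oO_{\mathcal{Z}/E})=n$. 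Relatedly, your claim that a ``K\"unneth/degree analysis'' discards the other components of $\gamma$ is too weak: the component of $\pi_{*}\gamma$ of the form $1\otimes D$ produces an insertion of exactly the right degree to pair nontrivially with a one-dimensional class, and its vanishing is proved by noting that the corresponding integrand is pulled back along the one-dimensional fibers of the quotient map, not by degree reasons. So the skeleton of your proof is right and you cite the correct external input, but the reduction to that input --- the $E$-quotient argument --- is missing.
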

\begin{proof}
By \cite[Prop.~5.3]{CT1}, we know $P^t_0(X,n[E])$ is independent of the choice of $t>0$, so we may set $t\to \infty$ and work with PT stability. 
As in \cite[Lem.~3.5]{CMT2}, there is an isomorphism 
$$\pi^*: \Hilb^n(Y) \cong P_0(X,n[E]), \quad I_Z\mapsto \pi^*I_Z. $$
For $I=\pi^*I_Z\in P_0(X,n[E])$, by projection formula and 
$$\dR\pi_*\oO_X\cong \oO_{Y}\oplus K_{Y}[-1], $$
we obtain  
$$\RHom_X(I,I)\cong \RHom_{Y}(I_Z,I_Z)\oplus \RHom_{Y}(I_Z,I_Z\otimes K_{Y})[-1]. $$
By taking the traceless part, we get 
\begin{align}\label{equ1 on ell fib}\Ext^2_X(I,I)_0&\cong \Ext^2_{Y}(I_Z,I_Z)_0\oplus \Ext^1_{Y}(I_Z,I_Z)_0 \\ \nonumber 
&\cong \Ext^2_{Y}(I_Z,I_Z)_0\oplus \Ext^2_{Y}(I_Z,I_Z)_0^{\vee}, 
\end{align}
where we use Serre duality in the second isomorphism.

Next we compare cosections on these obstruction spaces. By \cite[Lem.~9.4]{KiP}, we have a surjective isotropic cosection
\begin{align*}\phi_X: \Ext^2_X(I,I)_0 \stackrel{\mathrm{At}(I)}{\longrightarrow} \Ext^3_X(I,I\otimes T^*X)\stackrel{\mathrm{tr}}{\longrightarrow}
H^3(X,T^*X)\stackrel{H\sigma_X }{\longrightarrow} H^4(X,\wedge^4T^*X) \stackrel{\int}{\longrightarrow}\mathbb{C},  \end{align*}
where $\mathrm{At}(I)\in \Ext^1_X(I,I\otimes T^*X)$ denotes the Atiyah class of $I$, $H\in H^1(X,T^*X)$ is an ample divisor 
and $\sigma_X\in H^0(X,\wedge^2T^*X)$ is a holomorphic symplectic form of $X$. 
The cosection $\phi_X$ is isotropic 
from the proof of~\cite[Cor.~9.5]{KiP}, and 
the surjectivity of $\phi_X$
also 
follows from loc.~cit.~together with 
(in the notation of~\cite[Lem.~9.4]{KiP})
\begin{align*}
    \int_{X}\iota_{H\sigma_X}\sigma_X \cup \beta \neq 0,
\end{align*}
where $\beta$ is the Poincar\'{e} dual of the curve 
class of $I$,
and the above non-vanishing follows since
$H$ is ample and $\beta$ is effective. 

By the compatibility of Atiyah classes with map $\pi: X\to Y$ (ref.~\cite[Prop.~3.14]{BFl}), we have a commutative diagram 
\begin{align}\label{com diag on atiyah class}\xymatrix{
\Ext^2_X(I,I)_0  \ar[r]^{\mathrm{At}(I) \,\,\, \quad } & \Ext^3_X(I,I\otimes T^*X) \ar[r]^{ \quad \mathrm{tr}  } & H^3(X,T^*X) \ar[r]^{\mathrm{pr}\quad \,\,\,}  &
H^{1,1}(S)\otimes H^{0,2}(T) \\
\Ext^2_Y(I_Z,I_Z)_0 \ar[u]_{i} \ar[r]^{\mathrm{At}(I_Z) \,\,\, \quad  } & \Ext^3_Y(I_Z,I_Z\otimes T^*Y) \ar[u]_{  }  \ar[r]^{\quad \,\, \mathrm{tr} }  & H^3(Y,T^*Y) \ar[r]^{\cong\quad \quad } & H^{1,1}(E)\otimes H^{0,2}(T) \ar[u]_{\pi^*},  } \end{align}
where $i$ is the embedding in \eqref{equ1 on ell fib}, $tr$ denotes the trace map and 
$pr$ is the projection with respect to K\"unneth decomposition.
We define a cosection 
\begin{align*}\phi_Y: \Ext^2_Y(I_Z,I_Z)_0 \stackrel{\mathrm{At}(I_Z)}{\longrightarrow} \Ext^3_Y(I_Z,I_Z\otimes T^*Y)\stackrel{\mathrm{tr}}{\longrightarrow}
H^3(Y,T^*Y)\cong H^{1,1}(E)\otimes H^{0,2}(T)\stackrel{\epsilon}{\longrightarrow} \mathbb{C},  \end{align*}
$$\mathrm{where} \quad \epsilon(\alpha)=\int_{X} H \sigma_X\cdot\pi^*\alpha, \quad \alpha\in H^{1,1}(E)\otimes H^{0,2}(T). $$
It is easy to see $\phi_Y$ is a positive multiple of the standard cosection of $\Hilb^n(Y)$ (see~e.g.~\cite[Eqn.~(6)]{O2}), 
hence its reduced virtual class keeps the same.

By diagram \eqref{com diag on atiyah class}, we have a commutative diagram: 
\begin{align*}\xymatrix{
\Ext^2_X(I,I)_0  \ar[r]^{\quad \quad \phi_X} & \mathbb{C} \\
\Ext^2_Y(I_Z,I_Z)_0 \ar[u]_{i} \ar[ur]_{\quad \phi_Y}. &  } \end{align*}
We claim that $\Ker(\phi_Y)$ is a maximal isotropic subspace of 
$\Ker(\phi_X)/\mathrm{Im}(\phi^{\vee}_X)$. In fact, by taking dual, we have a commutative diagram 
\begin{align*}\xymatrix{
\mathbb{C} \ar[d]^{=} \ar[r]^{ \phi^{\vee}_X \quad \quad \,\, }  & \Ext^2_X(I,I)_0^{\vee}  \ar[d]^{i^{\vee}} \ar[r]^{Q_{\mathrm{Serre}} \quad\quad\quad\quad\quad\quad\quad\quad}_{\cong \quad\quad\quad\quad\quad\quad\quad\quad } & \Ext^2_X(I,I)_0\cong  \Ext^2_Y(I_Z,I_Z)_0\oplus 
\Ext^2_Y(I_Z,I_Z)_0^{\vee} \ar[dl]^{\pi_2}  \\
\mathbb{C}\ar[r]^{ \phi^{\vee}_Y \quad \quad \quad } & \Ext^2_Y(I_Z,I_Z)_0^{\vee}.   &  } \end{align*}
Since $\phi_Y$ is surjective, so   $\phi^{\vee}_Y$ is injective, therefore  
$$\mathrm{Im}(\phi^{\vee}_X)\cap\Ker(\phi_Y)\subseteq \mathrm{Im}(\phi^{\vee}_X)\cap \Ext^2_Y(I_Z,I_Z)_0=0,  $$
and $\Ker(\phi_Y)$ defines a subspace of $\Ker(\phi_X)/\mathrm{Im}(\phi^{\vee}_X)$. 
This is a maximal isotropic subspace as $i: \Ext^2_Y(I_Z,I_Z)_0\to\Ext^2_X(I,I)_0$ is so. 

The above construction works in family and therefore we have 
$$[P_0(X,n[E])]^{\vir}=[\Hilb^n(Y)]^{\vir}\in A_1(P_0(X,n[E])), $$
for certain choice of orientation in the LHS. Consider a commutative diagram:
\begin{align*} \xymatrix{
X \ar[d]_{\pi} & X\times P_0(X,n[E]) \ar[d]_{\bar{\pi}=(\pi,(\pi^*)^{-1})}  \ar[l]_{\pi_X \quad\quad} \ar[r]^{\quad \pi_P }    & P_0(X,n[E]) \ar[d]_{(\pi^*)^{-1}}^{\cong}   \\
Y & Y\times \Hilb^n(Y) \ar[l]_{\pi_{Y} \quad\quad }    \ar[r]^{\,\, \pi_M }   & \Hilb^n(Y),  } \quad \quad
\end{align*}
and denote $\mathcal{Z}\hookrightarrow Y\times \Hilb^n(Y)$ to be the universal 0-dimensional subscheme. Then 
\begin{align*}
P_{0,n[E]}(\gamma)&=\int_{[P_0(X,n[E])]^{\vir}}\pi_{P*}(\pi_X^*\gamma\cdot \bar{\pi}^*\ch_3(\oO_{\mathcal{Z}})) \\
&=\int_{[\Hilb^n(Y)]^{\vir}}\pi_{M*}\bar{\pi}_*(\pi_X^*\gamma\cdot \bar{\pi}^*\ch_3(\oO_{\mathcal{Z}})) \\
&=\int_{[\Hilb^n(Y)]^{\vir}} \pi_{M*}(\ch_3(\oO_{\mathcal{Z}})\cdot\bar{\pi}_*(\pi_X^*\gamma) ) \\
&=\int_{[\Hilb^n(Y)]^{\vir}} \pi_{M*}(\ch_3(\oO_{\mathcal{Z}})\cdot\pi_{Y}^*\pi_*\gamma ).
%&=\int_{[\Hilb^n(Y)]^{\vir}} \pi_{M*}\left(\ch_3(\oO_{\mathcal{Z}}) \cdot \pi_{Y}^*(\pi_*D_1\otimes D_2 +A_2 \omega_E\otimes 1)\right),
\end{align*}
The statement now follows from Proposition~\ref{prop:K3xE calculation} below.
%where the fifth equation uses \eqref{equ on gamm} and $\omega_E \in H^2(E,\BZ)$ is the class of a point on the elliptic curve.
\end{proof}

\begin{prop}
\label{prop:K3xE calculation}
Let $\omega \in H^2(E,\BZ)$ be the class of point and $D \in H^2(T,\BQ)$ any class. Then for any $n \geqslant 1$ we have:
\begin{align*}
\int_{[ \Hilb^n(E \times T) ]^{\text{vir}} } \pi_{M \ast}\big( \ch_3(\CO_{\CZ}) \pi_Y^{\ast}( \omega \otimes 1 ) \big) & = (-1)^{n+1} e(T)  \sum_{d|n} d^2, \\
\int_{[ \Hilb^n(E \times T) ]^{\text{vir}} } \pi_{M \ast}\big( \ch_3(\CO_{\CZ}) \pi_Y^{\ast}( 1 \otimes D ) \big) & = 0.
\end{align*}
\end{prop}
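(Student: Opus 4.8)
The plan is to exploit the translation action of $E$ on $Y=E\times T$, together with a degeneration of $E$ and a Behrend function computation in the spirit of \cite{B, OS}. First I would record the geometry of the reduced class. The class $[\Hilb^n(Y)]^{\vir}\in A_1(\Hilb^n(Y))$ is the Kiem--Park reduced class whose cosection is induced by the holomorphic one-form $dz\in H^0(E,\Omega_E)$; since this form is nowhere vanishing, the reduction direction is exactly the infinitesimal $E$-translation. Translation on the first factor gives an action $\mu\colon E\times \Hilb^n(Y)\to \Hilb^n(Y)$, and the summation (Albanese) morphism $s\colon \Hilb^n(Y)\to E$ sending a subscheme to the sum of its $E$-coordinates (with multiplicity) satisfies $s(\mu(a,Z))=s(Z)+na$. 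Thus $[\Hilb^n(Y)]^{\vir}$ is an $E$-invariant, one-dimensional class swept out by orbits in the reduction direction.

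Next I would identify the two integrands as tautological divisors. Since $\CZ$ has relative dimension zero, $\ch_3(\CO_{\CZ})=[\CZ]$, and for a divisor $\alpha\in H^2(Y)$ the class $\pi_{M\ast}(\ch_3(\CO_{\CZ})\,\pi_Y^{\ast}\alpha)\in H^2(\Hilb^n(Y))$ is the ``total $\alpha$-charge'' descendent descending from $\Sym^n(Y)$. For $\alpha=\omega\otimes 1$ this equals $s^{\ast}\omega$, whereas for $\alpha=1\otimes D$ it is pulled back from the $T$-factor of the symmetric product, hence strictly $\mu$-invariant since $E$ acts trivially on $T$. The vanishing of the second integral is then formal: an $E$-invariant divisor pulled back from the $T$-direction restricts to a torsion class on every $E$-orbit closure, so it pairs trivially with the $E$-invariant $1$-cycle $[\Hilb^n(Y)]^{\vir}$. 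Equivalently, $\mu^{\ast}$ of the $1\otimes D$ divisor has no $H^1(E)$-cross term, while $\mu^{\ast}s^{\ast}\omega$ does, and only the latter pairs nontrivially with the orbit direction. This gives the second identity.

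For the first integral I would use that cutting the reduced class by $s^{\ast}\omega$ undoes the reduction: it produces the ordinary (symmetric obstruction theory) virtual count of a fibre $s^{-1}(e_0)\subset \Hilb^n(Y)$, a zero-dimensional reduced problem on the Calabi--Yau threefold $Y$. By Behrend's theorem this equals the $\nu$-weighted Euler characteristic of that fibre. I would evaluate it by degenerating $E$ to a nodal cubic and applying the degeneration formula to the relative geometry $(\BP^1\times T)/(T_0\cup T_\infty)$, reducing to local fibre contributions. The generic Behrend sign is $(-1)^{3n-1}=(-1)^{n+1}$, the factor $e(T)=24$ comes from the Euler characteristic of the $K3$ surface (the transverse $T$-directions), and the orbit-degree weighting $\sum_{d\mid n}d^2$ emerges from summing over the clustering types of the length-$n$ scheme along the $E$-direction. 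The case $n=1$, where $\Hilb^1(Y)=Y$ and $s^{-1}(e_0)=T$ gives $(+1)\cdot e(T)\cdot 1=24$, fixes the normalization.

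The main obstacle is this last step: rigorously tracking the reduced class and its cosection through the degeneration formula so as to identify the $\nu$-weighted fibre count, and pinning down the local contribution as exactly $\sum_{d\mid n}d^2$ with the stated Behrend sign. Everything else (the $E$-invariance of the reduced class, the description of the insertions as $s^{\ast}\omega$ and a $T$-pulled-back divisor, and the resulting vanishing) is comparatively formal.
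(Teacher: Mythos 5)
Your overall strategy --- exploit the $E$-translation action, descend to a symmetric obstruction theory, and evaluate a Behrend-weighted Euler characteristic --- is the same as the paper's, which works with the stacky quotient $p\colon \Hilb^n(Y)\to \Hilb^n(Y)/E$ and the identity $[\Hilb^n(Y)]^{\vir}=p^{\ast}[\Hilb^n(Y)/E]^{\vir}$ from \cite{O2}. However, there is a genuine error in your treatment of the first insertion. The class $\pi_{M\ast}\big(\ch_3(\CO_{\CZ})\,\pi_Y^{\ast}(\omega\otimes 1)\big)$ is \emph{not} equal to $s^{\ast}\omega$ for the summation morphism $s\colon \Hilb^n(Y)\to E$. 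Writing $\omega=z\wedge\bar z$ with $z,\bar z\in H^1(E)$, one has $s^{\ast}\omega=\big(\textstyle\sum_i \mathrm{pr}_i^{\ast}z\big)\big(\sum_j \mathrm{pr}_j^{\ast}\bar z\big)$, which equals the tautological divisor \emph{plus} the cross terms $\sum_{i\neq j}\mathrm{pr}_i^{\ast}z\,\mathrm{pr}_j^{\ast}\bar z$. These cross terms do not pair trivially with the orbit direction: on an $E$-orbit $a\mapsto Z+a$ the map $s$ is $a\mapsto s(Z)+na$, of degree $n^2$, so $s^{\ast}\omega$ has degree $n^2$ on each orbit, whereas the tautological divisor has degree $n$ (the subscheme $Z+a$ meets $e_0\times T$ for exactly $n$ values of $a$, counted with multiplicity). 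Since $[\Hilb^n(Y)]^{\vir}$ is a sum of orbit classes, your identification would inflate the answer by a factor of $n$. The correct bookkeeping is the paper's: push forward along $\tilde p$, use $\tilde p_{\ast}\big(\pi_{T\times E}^{\ast}(\omega\otimes 1)\big)=1$ and $\pi_{M/E\,\ast}\ch_3(\CO_{\CZ/E})=n$, so the integral equals $n\cdot e\big(\Hilb^n(Y)/E,\nu\big)$.

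Two further points. First, you leave the evaluation of the local contribution $\sum_{d\mid n}d^2$ (with sign $(-1)^{n+1}$ and factor $e(T)=24$) as the ``main obstacle,'' proposing to rederive it by degenerating $E$ to a nodal cubic; the paper instead simply quotes $n\cdot e\big(\Hilb^n(Y)/E,\nu\big)=24\,(-1)^{n-1}\sum_{d\mid n}d^2$ from \cite[Cor.~1]{OS}, so the hard computation is already in the literature and need not be redone. (The degeneration argument you sketch is essentially what the paper uses later, in the proof of Theorem \ref{thm2 on g=1 of multiple fiber}, to pass from $E\times T$ to $\p^1\times T$.) Second, your argument for the vanishing of the second integral --- that the $1\otimes D$ insertion is pulled back from the $T$-direction and hence pairs trivially with orbit classes --- is correct and matches the paper's observation that $\ch_3(\CO_{\CZ})\,\pi_{T\times E}^{\ast}(1\otimes D)=\tilde p^{\ast}\big(\ch_3(\CO_{\CZ/E})\,\pi_T^{\ast}D\big)$ dies under $\tilde p_{\ast}$.
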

\begin{proof}
Write $\Hilb = \Hilb^n(T \times E)$ and consider the diagram
\[
\begin{tikzcd}
T \times E & \Hilb \times T \times E \ar[swap]{l}{\pi_{T \times E}} \ar{r}{\pi_M} \ar{d}{\tilde{p}} & \Hilb \ar{d}{p} \\
& \frac{ \Hilb \times T \times E }{E} \ar{r}{\pi_{M/E}} & \Hilb/E,
\end{tikzcd}
\]
where the quotient by $E$ is taken in the stacky sense.
The universal subscheme $\CZ \subset \Hilb \times T \times E$ has a natural $E$-linearization and hence arises from the pullback of
a subscheme $\CZ/E \subset (\Hilb \times T \times E)/E$. Moreover, as in \cite{O2},
there exists a natural (0-dimensional) virtual class $[ \Hilb/E ]^{\text{vir}}$ such that
\[ [ \Hilb ]^{\text{vir}} = p^{\ast} [\Hilb/E]^{\text{vir}}. \]
Since the virtual class of $\Hilb/E$ arises from a symmetric obstruction theory (on an \'etale cover of $\Hilb/E$), its degree 
can be computed by an Behrend weighted Euler characteristic \cite{B}:
\[ \int_{ [\Hilb/E]^{\text{vir}} } 1 = e\left( \Hilb/E, \nu \right). \]
We argue now as follows: Applying the pushpull formula and using $p \circ \pi_M = \pi_{M/E} \circ \tilde{p}$ we have
\begin{align*}
N_n & := \int_{[ \Hilb ]^{\text{vir}} } \pi_{M \ast}\big( \ch_3(\CO_{\CZ}) \pi_{T \times E}^{\ast}( \omega \otimes 1 ) \big) \\
& = \int_{[ \Hilb/E ]^{\text{vir}} } \pi_{M/E \ast} \tilde{p}_{\ast} \big( \ch_3(\CO_{\CZ}) \pi_{T \times E}^{\ast}( \omega \otimes 1 ) \big) \\
& = \int_{[ \Hilb/E ]^{\text{vir}} } \pi_{M/E \ast} \big( \ch_3(\CO_{\CZ/E}) \tilde{p}_{\ast}( \pi_{T \times E}^{\ast}( \omega \otimes 1 )) \big).
\end{align*}
Then by checking on fibers we have $\tilde{p}_{\ast}( \pi_{T \times E}^{\ast}( \omega \otimes 1 )) = 1$ 
as well as $\pi_{M/E \ast} \ch_3(\CO_{\CZ/E}) = n$. This implies that
\[ N_n = n \int_{ [\Hilb/E]^{\text{vir}} } 1 = n\cdot e\left( \Hilb/E, \nu \right) = 24 (-1)^{n-1} \sum_{d|n} d^2, \]
%where $e( - , \nu )$ denotes the (stacky) topological Euler characteristic weighted by the Behrend function,
where for the last equality we have used \cite[Cor.~1]{OS}.

For the second integral we argue identically, but observe that we have
\[ \ch_3(\CO_{\CZ}) \pi_{T \times E}^{\ast}( 1 \otimes D ) = \tilde{p}^{\ast}( \ch_3(\CO_{\CZ/E}) \pi_T^{\ast}(D)), \]
so when pushing forward by $\tilde{p}$ the integral vanishes.
%where we let $e(W,\nu) = \sum_k e(\nu^{-1}(k)) k$ denote the (stacky) Euler characteristic weighted by the Behrend function $\nu: W \to \BZ$.
%Using the computation of the Behrend functions of the Hilbert schemes by Behrend-Fantechi \cite{BFb},
%and accounting for the fact that $\Hilb \to \Hilb/E$ is smooth of relative dimension $1$, we obtain
%\[ N_n = (-1)^{n+1} n e( \Hilb/E ). \]
%The first claim hence follows from \cite{Shen,GR} (see \cite[Equation (2)]{OS} for a modern formulation)
%\[ \sum_{n \geq 1} e( \Hilb^n(S \times E)/E ) q^n = 
\end{proof}
Similarly, we can consider a nontrivial elliptic fibration: 
$$\bar{p}=(p,\id_T): X=S\times T\to \mathbb{P}^{1}\times T, $$
where $p: S\rightarrow\mathbb{P}^{1}$ is an elliptic $K3$ surface with a section $i$. Let $f$ be a generic fiber of $\bar{p}$.
%In this section, we restrict to multiple fiber classes $\beta=r[f]\in H_2(S,\mathbb{Z})\subseteq H_2(X,\mathbb{Z})$ ($r\geqslant 1$) and 
%provide nonprimitive examples for Conjecture \ref{conj on DT4/GV} (3).
%We assume general fibers of $p$ are smooth elliptic curves and any singular fiber is either a nodal or cuspidal plane curve.
\begin{thm}\label{thm2 on g=1 of multiple fiber}
%Let $S$ be an elliptic K3 surface and $[f]$ be the fiber class. Let $X=S\times T$ and 
Let $t>0$ and $\gamma \in H^4(X)$. Then for certain choice of orientation, we have 
\begin{align}\label{equ on P0 multiple fiber}
\sum_{r\geqslant 0}P^t_{0,r[f]}(\gamma)\,q^r=24\,\left(\int_{S \times \pt} \gamma\right)\cdot \sum_{m\geqslant 1}\sum_{n | m}n^2q^m. \end{align}
%Here $\gamma:=A_1\otimes \pt+D_1\otimes D_2+ \pt\otimes A_2\in H^0(S)\otimes H^4(T)\oplus H^2(S)\otimes H^2(T)\oplus H^4(S)\otimes H^0(T)$.
\end{thm}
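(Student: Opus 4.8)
The plan is to run the proof of Theorem~\ref{thm1 on g=1 of multiple fiber} almost verbatim, with $\bar p\colon X=S\times T\to Y:=\mathbb{P}^1\times T$ in place of the trivial fibration $\pi\colon E\times E\times T\to E\times T$, and then to isolate the single place where the nontrivial fibration forces a different argument. The structural input that makes the transport possible is the identity
\[ \dR\bar p_*\oO_X\cong \oO_Y\oplus K_Y[-1]. \]
This still holds here: since $p\colon S\to\mathbb{P}^1$ is an elliptic $K3$ surface with a section, relative duality gives $R^0p_*\oO_S=\oO_{\mathbb{P}^1}$ and $R^1p_*\oO_S\cong\omega_{\mathbb{P}^1}$ (using $\omega_S\cong\oO_S$), and tensoring with $\oO_T$ together with $\omega_T\cong\oO_T$ (so that $K_Y=\omega_{\mathbb{P}^1}\boxtimes\oO_T$) yields the claim. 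This is exactly the property of $\dR\pi_*\oO_X$ that was used in the trivial case.

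Granting this, I would argue as before. By (the fiber-class case of) \cite[Prop.~5.3]{CT1}, $P^t_0(X,r[f])$ is independent of $t>0$, so one sets $t\to\infty$ and works with PT stability, where $\bar p^*$ gives the isomorphism $\Hilb^r(Y)\cong P_0(X,r[f])$, $I_Z\mapsto\bar p^*I_Z$. For $I=\bar p^*I_Z$, the projection formula and the displayed identity yield $\RHom_X(I,I)\cong\RHom_Y(I_Z,I_Z)\oplus\RHom_Y(I_Z,I_Z\otimes K_Y)[-1]$, and taking the traceless part and applying Serre duality on the threefold $Y$ gives $\Ext^2_X(I,I)_0\cong\Ext^2_Y(I_Z,I_Z)_0\oplus\Ext^2_Y(I_Z,I_Z)_0^\vee$ exactly as in \eqref{equ1 on ell fib} (the twist by $K_Y$ is now present before Serre duality but is absorbed by it). The cosection comparison is then identical: using compatibility of Atiyah classes with $\bar p$, the Kiem--Park cosection $\phi_X$ built from $H\sigma_X$ restricts along $\Ext^2_Y(I_Z,I_Z)_0\hookrightarrow\Ext^2_X(I,I)_0$ to a positive multiple of the standard reduced cosection $\phi_Y$ of $\Hilb^r(Y)$ coming from $\sigma_T$, and $\Ker(\phi_Y)$ is maximal isotropic in $\Ker(\phi_X)/\mathrm{Im}(\phi_X^\vee)$. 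Working in families identifies $[P_0(X,r[f])]^{\vir}=(-1)^{r+1}[\Hilb^r(Y)]^{\vir}\in A_1(\Hilb^r(Y))$ for a suitable orientation.

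A diagram chase identical to that of Theorem~\ref{thm1 on g=1 of multiple fiber} then gives
\[ P^t_{0,r[f]}(\gamma)=(-1)^{r+1}\int_{[\Hilb^r(Y)]^{\vir}}\pi_{M*}\!\left(\ch_3(\CO_{\CZ})\cdot\pi_Y^*\bar p_*\gamma\right). \]
Writing $\bar p_*\gamma$ by Künneth on $Y=\mathbb{P}^1\times T$ as $\bar p_*\gamma=\big(\int_{S\times\pt}\gamma\big)(\omega_{\mathbb{P}^1}\otimes 1)+(1\otimes D)$ for some $D\in H^2(T)$ (the first coefficient being computed by pairing with $1\otimes[\mathrm{pt}]_T$ via $\bar p^*$), I am reduced to the analogue of Proposition~\ref{prop:K3xE calculation}, namely
\[ \int_{[\Hilb^r(Y)]^{\vir}}\pi_{M*}\!\big(\ch_3(\CO_{\CZ})\,\pi_Y^*(\omega_{\mathbb{P}^1}\otimes 1)\big)=(-1)^{r+1}e(T)\sum_{d\mid r}d^2,\qquad \int_{[\Hilb^r(Y)]^{\vir}}\pi_{M*}\!\big(\ch_3(\CO_{\CZ})\,\pi_Y^*(1\otimes D)\big)=0. \]
Substituting and summing over $r$ gives the stated series, since $(-1)^{r+1}\cdot(-1)^{r+1}e(T)=e(T)=24$.

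The main obstacle is exactly this last pair of evaluations, the only step where the nontrivial fibration genuinely differs. In Proposition~\ref{prop:K3xE calculation} the free translation action of $E$ on the fibers of $E\times T\to T$ was used to descend to $\Hilb/E$ and reduce to a $0$-dimensional symmetric count evaluated by a Behrend-weighted Euler characteristic via \cite[Cor.~1]{OS}. For $Y=\mathbb{P}^1\times T$ no such group action exists, so instead I would degenerate the base $\mathbb{P}^1$, specializing the incidence fiber carrying $\omega_{\mathbb{P}^1}\otimes 1$ onto a bubbled rational component, and apply the degeneration formula for the reduced (cosection-localized) virtual class. This should localize the $\omega_{\mathbb{P}^1}\otimes 1$ evaluation to the contribution of a single $K3$ fiber $T$, governed again by the Behrend-function computation of \cite{B, OS} and giving the universal value $(-1)^{r+1}e(T)\sum_{d\mid r}d^2$ independent of the base. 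The vanishing of the $1\otimes D$ integral follows as in Proposition~\ref{prop:K3xE calculation}: as $\CZ$ is zero-dimensional the curve class $\bar p_*(r[f])=0$, so a divisor pulled back from $T$ pairs trivially with the fiber-supported reduced geometry. The delicate points are checking that the degeneration formula is compatible with the reduced virtual class and that the base degeneration preserves the normalization, so that the answer indeed matches the $E$-fibration computation.
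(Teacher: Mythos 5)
Your overall strategy coincides with the paper's: the first half (checking $\dR\bar p_*\oO_X\cong\oO_Y\oplus K_Y[-1]$ via relative duality, transporting to $\Hilb^r(Y)\cong P_0(X,r[f])$, the cosection comparison, and the identification of reduced virtual classes up to sign) is exactly the transcription of Theorem \ref{thm1 on g=1 of multiple fiber} that the paper intends, and your evaluation of the $\omega_{\mathbb{P}^1}\otimes 1$ integral by degenerating the base $\p^1$ and matching against the $E$-fibration computation is precisely the paper's argument: the paper degenerates $\p^1$ into a chain of three $\p^1$'s with the insertion specialized to the middle component, so that the resulting relative integral over $\Hilb^n(T\times\p^1/T_0\cup T_\infty)$ is literally the same one produced by degenerating $E$ to a nodal $\p^1$ in Proposition \ref{prop:K3xE calculation}; no independent Behrend-function computation is needed.

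The one genuine gap is your treatment of the vanishing of $\int_{[\Hilb^r(Y)]^{\vir}}\pi_{M\ast}\big(\ch_3(\CO_{\CZ})\,\pi_Y^{\ast}(1\otimes D)\big)$. You assert this ``follows as in Proposition \ref{prop:K3xE calculation}'', but the mechanism there is unavailable: in the $E\times T$ case the integrand was written as $\tilde p^{\ast}\big(\ch_3(\CO_{\CZ/E})\,\pi_T^{\ast}(D)\big)$ and killed by pushing forward along the quotient by the \emph{free translation action of $E$}, and $\p^1$ carries no such action. Your fallback --- that $\bar p_*(r[f])=0$ so a divisor pulled back from $T$ ``pairs trivially'' --- is not an argument: $\pi_{M\ast}\big(\ch_3(\CO_{\CZ})\,\pi_Y^{\ast}(1\otimes D)\big)$ is a nontrivial degree-two class on $\Hilb^r(Y)$, and its pairing with the one-dimensional reduced virtual class is not excluded on dimension or curve-class grounds. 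The paper closes this step differently, by applying the localization formula for the scaling action of $\BC^{\ast}$ on $\p^1$ (the class $1\otimes D$ admits an equivariant lift pulled back from $T$, and localization then forces the integral to vanish); you need this, or some genuine substitute, to isolate the $\int_{S\times\pt}\gamma$ coefficient in the final formula.
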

\begin{proof}
The first proof is parallel to the proof of Theorem \ref{thm1 on g=1 of multiple fiber}. 
For the second part, we need to evaluate
\begin{equation} \int_{[ \Hilb^n(  \p^1 \times T) ]^{\text{vir}} } \pi_{M \ast}\big( \ch_3(\CO_{\CZ})\,\pi_Y^{\ast}( \omega \otimes 1 ) \big), \label{aaas} \end{equation}
where $\omega \in H^2(\p^1)$ is the class of a point.
We consider the degeneration of $T \times \p^1$ given by the product of $T$ with the degeneration of $\p^1$ into a chain of three $\p^1$'s.
By specializing the insertion $\omega$ to the middle factor, we
are reduced to an integral of the relative Hilbert schemes $\Hilb^n(T \times \p^1 / T_0 \cup T_{\infty} )$ with the same integrand.
But this integral is also the outcome of applying the degeneration formula to the integrals considered in Proposition~\ref{prop:K3xE calculation}
(under the degeneration of $E$ to a nodal $\p^1$).
Hence \eqref{aaas} is given by $(-1)^{n+1} e(T) \sum_{d|n} d^2$ as well.
For the analogue of the second integral in Proposition~\ref{prop:K3xE calculation}, the localization formula applied to the scaling action of $\BC^{\ast}$ on $\p^1$ shows that it vanishes.
\end{proof}
\begin{rmk}\label{rmk on impr}
On the product of two $K3$ surfaces, 
genus 1 Gopakumar-Vafa invariants in imprimitive classes are defined in \cite[Def.~A.1]{COT1}. In particular, for multiple fiber classes 
$\beta=r[f]$ above, by using \cite[Eqn.~(5.7)]{COT1}, we know $n_{1,r[f]}(\gamma)=0$ if $r>1$.  
\end{rmk}

\section{Hilbert schemes of two points on $K3$}

\subsection{Rational curves on exceptional locus}
Let $S$ be a $K3$ surface. 
Consider the Hilbert-Chow map 
$$\pi: \Hilb^2(S)\to \Sym^2(S) $$
to the symmetric product of $S$. Let $D$ be the exceptional divisor fitting into Cartesian diagram: 
\begin{align*} \xymatrix{
D \ar[d]_{\pi}  \ar[r]^{i \quad \,\,\, }    & \Hilb^2(S) \ar[d]^{\pi}   \\
S    \ar[r]^{\Delta \quad \,\,\, }   & \Sym^2(S),  } \quad \quad
\end{align*}
where $\Delta$ is the diagonal embedding. Note that $\pi: D\to S$ is a $\mathbb{P}^1$-bundle and any fiber of it has normal bundle 
$\oO_{\mathbb{P}^1}(-2,0,0)$. 
%Therefore it gives an example when our ideal geometry is realised. 
\begin{thm}\label{thm on hilbS}
When $t=\frac{n}{\omega\cdot\beta}+0^+$ $($i.e.~in JS chamber$)$,
Conjecture \ref{conj on DT4/GV} (1),~(2) hold for multiple fiber classes $\beta=r[\mathbb{P}^1]$ $($$r\geqslant 1$$)$ of $\pi$ as above. 
\end{thm}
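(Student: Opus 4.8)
The plan is to reduce everything to tautological integrals on $\Hilb^n(S)$ and evaluate them with Lehn's Chern-class operators. Throughout I write $X=\Hilb^2(S)$, $D=\mathbb{P}(T_S)$, and $C_s=\pi^{-1}(s)$ for the fiber over $s\in S$, so that $N_{C_s/X}\cong\oO_{\mathbb{P}^1}(-2)\oplus\oO_{\mathbb{P}^1}^{\oplus 2}$. First I would identify the moduli space. Since distinct fibers of $\pi\colon D\to S$ are disjoint, any sheaf of class $r[\mathbb{P}^1]$ is set-theoretically supported on finitely many fibers, and the only $\mu$-stable sheaves of class $[\mathbb{P}^1]$ of the relevant slope are the $\oO_{C_s}$, each with $\chi=1$. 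A Jordan--H\"older analysis of a JS-stable pair $(F,s)$, combined with a dimension count of the loci of thickened or non-split configurations, then shows (as asserted in the introduction) that $P_n^{\mathrm{JS}}(X,r[\mathbb{P}^1])$ contributes nothing unless $n=r$, and that
\[ P_n^{\mathrm{JS}}(X,n[\mathbb{P}^1])\cong\Hilb^n(S), \]
with $\xi\in\Hilb^n(S)$ corresponding to the pair $(\oO_{\pi^{-1}(\xi)},1)$. Here the section is the canonical one and JS-condition (ii) is vacuous, so the stable locus coincides with the semistable locus.

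Next I would determine the reduced virtual class. For $n=1$ the space is $S$, already of the reduced virtual dimension $n+1=2$; after checking that the reduced obstruction vanishes (its rank is $n-1=0$), one gets $[P]^{\vir}=[S]$, and Conjecture \ref{conj on DT4/GV} (2) for $r=1$ reduces to the identity $\int_S\tau(\gamma_1)\tau(\gamma_2)=n_{0,[\mathbb{P}^1]}(\gamma_1,\gamma_2)$, both sides computing the $2$-dimensional family of fibers incident to the Poincar\'e-dual cycles, which is exactly the genus-$0$ evaluation of \cite{COT1}. For $n\geqslant 2$ the space $\Hilb^n(S)$ has dimension $2n>n+1$, so the reduced class is the reduced half-Euler class $e^{\frac{1}{2}}_{\mathrm{red}}$ of the self-dual obstruction bundle $\eE xt^2_{\pi_P}(\mathbb{I},\mathbb{I})_0$. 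Propagating this through the distinguished triangles of the obstruction theory and the splitting of $N_{C_s/X}$ exactly as in Theorem \ref{thm on vir clas}, the $\oO(-2)$-summand produces the obstructions, and I expect the reduced class to be the Euler class of a rank-$(n-1)$ tautological bundle on $\Hilb^n(S)$ attached to $R^1\pi_\ast\oO_\pi(-2)$, with the trivial summand cut off by the Kiem--Park cosection via Lemmata \ref{lem on indep of cosec} and \ref{lem on compu of half euler class}.

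Finally I would evaluate the invariants. The insertion $\tau(\gamma)$ only sees the fundamental cycle of the universal sheaf, hence restricts to the tautological divisor class on $\Hilb^n(S)$ attached to $\pi_\ast(\gamma|_D)\in H^2(S)$. Writing both the $\tau(\gamma_i)$ and the virtual class in terms of Chern classes of tautological bundles, the invariant becomes an integral over $\Hilb^n(S)$ that I would evaluate using Lehn's formula \cite{Lehn} for the Chern-class operators in the Nakajima--Fock model; for $n\geqslant 2$ I expect it to vanish, which establishes Conjecture \ref{conj on DT4/GV} (1). For the remaining case $r\geqslant 2$, $n=1$ of Conjecture \ref{conj on DT4/GV} (2) both sides vanish: the pair side because $n\neq r$, and $n_{0,r[\mathbb{P}^1]}=0$ because every genus-$0$ curve of class $r[\mathbb{P}^1]$ is a multiple cover of a single fiber and is therefore removed by the multiple-cover relation defining $n_0$ (Definition \ref{def of g=0 GV inv}).

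I expect the main obstacle to be the explicit determination of the reduced virtual class: matching the reduced half-Euler class of $\eE xt^2_{\pi_P}(\mathbb{I},\mathbb{I})_0$ with a concrete tautological bundle on $\Hilb^n(S)$ requires carefully tracking the cosection through the $(-2)$-shifted obstruction theory and the $\mathbb{P}^1$-bundle geometry. Once this is in place, the vanishing for $n\geqslant 2$ is a nontrivial but essentially mechanical application of Lehn's operator calculus.
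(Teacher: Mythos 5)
Your proposal has the same overall architecture as the paper's proof (Jordan--H\"older reduction of the JS moduli space to $\Hilb^n(S)$, identification of the reduced virtual class with a tautological class, evaluation via Lehn's operators), but the two steps that actually carry the theorem are left as unverified expectations, and your guesses for them do not match what is needed. First, the reduced virtual class: the paper does not obtain the Euler class of a rank-$(n-1)$ bundle built from $R^1\pi_*\oO_{\mathbb{P}^1}(-2)$. Instead it exhibits $\Ext^1_D(I_D,\pi^*\oO_Z)\cong \Ext^2_S(\oO_Z,\oO_Z)\cong H^0(S,\oO_Z)^{\vee}$ as a maximal isotropic subbundle of $\Ext^2_X(I_X,I_X)_0$ -- i.e.\ the dual tautological bundle $(\oO_S^{[d]})^{\vee}$, arising from Serre duality on $S$ applied to $\oO_Z$, not from the $\oO(-2)$ normal direction -- and concludes $[P_d^{\mathrm{JS}}]^{\vir}=[\Hilb^d(S)]\cap c_{d-1}(\oO_S^{[d]})$. (A bundle attached to $R^1\pi_*\oO(-2)$ would have rank $n$, so your class has the wrong source and, as stated, the wrong rank.) Second, and more importantly, the vanishing for $n=r\geqslant 2$ is not a ``mechanical application of Lehn's operator calculus'' to an arbitrary tautological integrand: the paper uses Lehn's formula only to identify $c_{d-1}(\oO_S^{[d]})=q_d(1)(1)=\pi_{M*}[\qQ]$, whose key feature is that it is \emph{supported} on $\mathrm{HC}^{-1}(\Delta)$ over the small diagonal $\Delta\subseteq\Sym^d(S)$, which is only $2$-dimensional. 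Since the insertions $\tau(\gamma_i)$ are pulled back from $\Sym^d(S)$ and impose $d+1>2$ conditions there, the integral vanishes. Without the correct identification of the virtual class this support argument is unavailable, so your ``I expect it to vanish'' is precisely the missing content.

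Two smaller gaps: (i) the case $n=mr$ with $m>1$, where the moduli space is nonempty but not isomorphic to a Hilbert scheme, is not disposed of by the Jordan--H\"older analysis alone; the paper handles it by noting that the integrand is pulled back along $P^{\mathrm{JS}}_{md}(X,d[\mathbb{P}^1])\to\Sym^d(S)$ and comparing the virtual dimension $md+1$ with $\dim\Sym^d(S)=2d$. (ii) For Conjecture \ref{conj on DT4/GV}~(2) with $r\geqslant 2$ you still need $n_{0,r[\mathbb{P}^1]}=0$ on the Gromov--Witten side; your multiple-cover heuristic is not a proof, since $n_{0,r\beta}$ is defined by subtracting the prescribed multiple-cover contributions from $\mathrm{GW}_{0,r\beta}$, and one must actually check that the local $(-2,0,0)$-curve contributions are exhausted by them.
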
 
\begin{proof}
As in \cite[Lem.~6.4]{CT1}, \cite[Lem.~3.1]{CT3},
by Jordan-H\"older filtration, the JS moduli space $P_n^{\mathrm{JS}}(X,d[\mathbb{P}^1])$ is nonempty only if 
$$d\,|\,n, \,\,\, n>0. $$
so we may assume $n=m\cdot d$ for $m\in \mathbb{Z}_{\geqslant 1}$. Consider the map 
\begin{align}f: P_{md}^{\mathrm{JS}}(X,d[\mathbb{P}^1])\to \Sym^d(S),  \quad (F,s)\mapsto \pi_*[F]. \end{align}
As the insertion \eqref{equ on pri ins} only involves fundamental cycle of the universal one dimensional sheaf $\mathbb{F}$, we have 
$$[\mathbb{F}]=\bar{f}^*[\mathcal{Z}], $$
where $[\mathcal{Z}]\hookrightarrow \Sym^d(S)\times S$ is the class of incident subvariety and $\bar{f}=(f,\id_S)$.
Therefore  
$$\int_{[P_{md}^{\mathrm{JS}}(X,d[\mathbb{P}^1])]^{\rm{vir}}}\prod_{i=1}^l\tau(\gamma_i)
=\int_{[P_{md}^{\mathrm{JS}}(X,d[\mathbb{P}^1])]^{\rm{vir}}}f^*\Phi, $$
for some $\Phi\in H^{2(md+1)}(\Sym^d(S))$. When $m>1$, we have $md+1>2d$, therefore $\Phi=0$ and 
$$P_{md,d[\mathbb{P}^1]}^{\mathrm{JS}}(\gamma_1,\ldots,\gamma_l)=0, \quad \mathrm{if}\,\,m>1. $$
For $m=1$, by a Jordan-H\"older filtration argument as \cite[Lem.~3.1]{CT3} again, we have
\begin{align*}
    \Hilb^d(S) &\stackrel{\pi^*}{\cong} P_{d}^{\mathrm{JS}}(D,d[\mathbb{P}^1]) \cong P_{d}^{\mathrm{JS}}(X,d[\mathbb{P}^1]),  \\
I_Z &\mapsto \pi^*I_Z\mapsto (\oO_X\to i_*\pi^*\oO_Z). 
\end{align*}
For $I_X=(\oO_X\to i_*\pi^*\oO_Z)$, we write $I_D=(\oO_D\to\pi^*\oO_Z)$. As in \cite[Prop.~4.3]{CMT2}, \cite[Prop.~4.2]{CKM2}, 
we have a canonical isomorphism 
$$\Ext^0_D(I_D,\pi^*\oO_Z)\cong \Ext^1_X(I_X,I_X)_0, $$
and an inclusion of maximal isotropic subspace 
\begin{align}\label{equ1 on excep curve}\Ext^1_D(I_D,\pi^*\oO_Z)\hookrightarrow \Ext^2_X(I_X,I_X)_0. \end{align}
From distinguished triangle 
$$I_D\to \oO_D\to \pi^*\oO_Z, $$
we obtain a distinguished triangle 
$$\RHom_D(\pi^*\oO_Z,\pi^*\oO_Z)\to \RHom_D(\oO_D,\pi^*\oO_Z)\to\RHom_D(I_D,\pi^*\oO_Z). $$
By projection formula, we have 
$$\RHom_D(\pi^*\oO_Z,\pi^*\oO_Z)\cong \RHom_S(\oO_Z,\oO_Z), \,\,\, \RHom_D(\oO_D,\pi^*\oO_Z)\cong \RHom_S(\oO_S,\oO_Z). $$ 
Therefore we get an exact sequence 
\begin{align}\label{equ2 on excep curve}0=H^1(S,\oO_Z)\cong H^1(D,\pi^*\oO_Z)\to \Ext^1_D(I_D,\pi^*\oO_Z) \to \Ext^2_S(\oO_Z,\oO_Z)\to 0. \end{align}
By Serre duality, we have 
\begin{align}\label{equ3 on excep curve} \Ext^2_S(\oO_Z,\oO_Z)\cong  \Ext^0_S(\oO_Z,\oO_Z)^{\vee}\cong H^0(S,\oO_Z)^{\vee}. \end{align}
Combining Eqns.~\eqref{equ1 on excep curve}, \eqref{equ2 on excep curve}, \eqref{equ3 on excep curve}, we obtain a maximal isotropic subspace
$$H^0(S,\oO_Z)^{\vee}\hookrightarrow\Ext^2_X(I_X,I_X)_0. $$
Working in family, we see that the dual of tautological bundle $\oO_S^{[d]}$ on $\Hilb^d(S)$ is a maximal isotropic subbundle  
of the obstruction bundle of $P_{d}^{\mathrm{JS}}(X,d[\mathbb{P}^1])$. By Lemma \ref{lem on indep of cosec}, we obtain
$$[P_{d}^{\mathrm{JS}}(X,d[\mathbb{P}^1])]^{\vir}=[\Hilb^d(S)]\cap c_{d-1}\left(\oO_S^{[d]}\right), $$
for certain choice of orientation. As for insertions, consider the following diagram 
\begin{align*} \xymatrix{
S & D  \ar[l]_{\pi} \ar[r]^{i} &  X \\
S\times \Hilb^d(S) \ar[d]^{\pi_M} \ar[u]_{\pi_S}       & D\times \Hilb^d(S) \ar[u]_{\pi_D}  \ar[d]^{\pi_M}  \ar[l]_{\bar{\pi}=(\pi,\id)} \ar[r]^{\bar{i}=(i,\id)} & X\times \Hilb^d(S) \ar[d]^{\pi_M} \ar[u]_{\pi_X}    \\
\Hilb^d(S) &\Hilb^d(S)  & \Hilb^d(S),  }  
\end{align*}
let $\mathcal{Z}\hookrightarrow\Hilb^d(S)\times S$ denote the universal zero dimensional subscheme, then 
\begin{align*}
\tau(\gamma)&=\pi_{M*}\left(\pi_X^*\gamma\cdot\ch_3(\bar{i}_*\bar{\pi}^*\oO_{\mathcal{Z}})\right) \\
&=\pi_{M*}\left(\pi_X^*\gamma\cdot\bar{i}_*\bar{\pi}^*[\mathcal{Z}]\right) \\
&=\pi_{M*}\bar{i}_*\left(\bar{i}^*\pi_X^*\gamma\cdot\bar{\pi}^*[\mathcal{Z}]\right) \\
&=\pi_{M*}\bar{\pi}_{*}\left(\pi_D^*i^*\gamma\cdot\bar{\pi}^*[\mathcal{Z}]\right) \\
&=\pi_{M*}\left(\bar{\pi}_{*}\pi_D^*i^*\gamma\cdot [\mathcal{Z}]\right)\\
&=\pi_{M*}\left(\pi_S^*\pi_*i^*\gamma\cdot [\mathcal{Z}]\right)\in H^2(\Hilb^d(S)),
\end{align*}
which depends only on $[\mathcal{Z}]$ and hence it is a pullback from $\Sym^d(S)$ by the Hilbert-Chow map
$$\mathrm{HC} \colon \Hilb^d(S)\to \Sym^d(S). $$ 
To sum up, we have  
\begin{align}\label{equ on exc cur} P_{d,d[\mathbb{P}^1]}^{\mathrm{JS}}(\gamma_1,\ldots,\gamma_l)=\int_{\Hilb^d(S)}c_{d-1}\left(\oO_S^{[d]}\right)\cdot 
\prod_{i=1}^l\pi_{M*}\left(\pi_S^*\pi_*i^*\gamma_i\cdot [\mathcal{Z}]\right). \end{align}
When $d=1$, this reduces to \cite[Lem.~3.7]{COT1}. When $d>1$, we claim the above integral is zero. 
In fact, by \cite[Thm.~4.6]{Lehn}, 
we have the formula 
\begin{align*}
	\sum_{m\geqslant 0} c\left(\mathcal{O}_S^{[m]}\right)z^m =
	\mathrm{exp}\left(\sum_{m\geqslant 1} \frac{(-1)^{m-1}}{m} q_m(1) z^m   \right) \cdot 1
	\end{align*}
where $q_m(1)$ are linear maps (called Nakajima operators)
\begin{align*}
	q_m(1) \in \End(\mathbb{H}), \quad 
	\mathbb{H}=\bigoplus_{m\geqslant 0} H^{\ast}(\mathrm{Hilb}^m(S), \mathbb{Q}),
	\end{align*}
which is of bidegree $(m, 2m-2)$. 
By looking at the bidegree $(d, 2d-2)$-part, 
we have 
\begin{align*}
	c_{d-1}\left(\oO_S^{[d]}\right)=q_d(1)(1), \quad \mathrm{where} \,\, 1\in H^0(\Hilb^0(S)). 
	\end{align*}
By the definition of $q_d(1)$ in~\cite[Def.~2.3]{Lehn}, we have $q_d(1)(1)=\pi_{M\ast}[\qQ]$
where $\qQ$ is the cycle on 
$\Hilb^d(S) \times S$ supported on 
$(\xi, x)$ with $\mathrm{Supp}(\xi)=x$. 
Therefore 
we know $c_{d-1}\left(\oO_S^{[d]}\right)$ is supported on $\mathrm{HC}^{-1}(\Delta)$, where
$$\Delta=\big\{(x,\cdots,x)\in\Sym^d(S) \big\}\subseteq \Sym^d(S)$$ is the small diagonal. 
Our insertion is a pullback from $\Sym^d(S)$ and gives $(d+1)$-dimensional constraint on $\Sym^d(S)$. If $d>1$, 
$d+1>2=\dim_{\mathbb{C}}\Delta$, therefore the integral \eqref{equ on exc cur} is zero. 
\end{proof}

\subsection{Small degree curve classes on $X=T^*\mathbb{P}^2$}
When the $K3$ surface $S$ has a
$(-2)$-curve $C \subset S$, 
the Hilbert scheme $\Hilb^2(S)$ contains 
$\mathrm{Sym}^2(C) \subset \Hilb^2(S)$
as a Lagrangian subvariety.
For curve classes coming from $\mathrm{Sym}^2(C) \cong 
\mathbb{P}^2$, our invariants can be studied on the local model $X=T^*\mathbb{P}^2$. 

We have an identification of curve classes:
\[ H_2(X,\BZ) = H_2(\p^2, \BZ) = \BZ [ \ell ], \]
where $\ell \subset \p^2$ is a line.
Let $H \in H^2(T^{\ast} \p^2)$ be the pullback of hyperplane class
and identify $H_2(T^{\ast} \p^2, \BZ) \equiv \BZ$ by its degree against $H$.
Gopakumar-Vafa invariants are given as follows: 
\begin{prop}\emph{(\cite[Cor.~6.2]{COT1})}\label{cor on inte on local p2}
\begin{align*}
n_{0,d}(H^2,H^2)&=
\left\{\begin{array}{rcl} 1   &\mathrm{if} \,\, d=1, \\ 
 -1  &\mathrm{if} \,\,  d=2,  \\
  0    & \,\,  \mathrm{otherwise}. 
\end{array} \right.  \\
n_{1,1}(H^2)&=0,  \quad n_{2,1}=0. 
\end{align*}
\end{prop}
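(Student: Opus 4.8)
Since this proposition is \cite[Cor.~6.2]{COT1}, the plan is to invoke that computation; here I outline its shape. The key geometric input is that $T^{\ast}\mathbb{P}^2$ carries a fiberwise scaling $\mathbb{C}^{\ast}$-action whose fixed locus is the zero section $\mathbb{P}^2$, and every curve in class $d[\ell]$ is scheme-theoretically supported on this $\mathbb{P}^2$. I would therefore compute the reduced Gromov-Witten invariants by localizing to integrals over $\overline{M}_{g,l}(\mathbb{P}^2,d)$, using that for a smooth rational curve $C\subset\mathbb{P}^2$ the normal bundle in $T^{\ast}\mathbb{P}^2$ splits as $N_{C/\mathbb{P}^2}\oplus \Omega^1_{\mathbb{P}^2}|_C$; the cotangent summand supplies the obstruction directions and the cosection used to reduce the virtual class. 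For a line this bundle is $\mathcal{O}(1)\oplus\mathcal{O}(-1)\oplus\mathcal{O}(-2)$, consistent with lines moving in the $2$-dimensional family $(\mathbb{P}^2)^{\vee}$.

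For the genus $0$ invariants, both insertions $H^2\in H^4(T^{\ast}\mathbb{P}^2)$ are Poincar\'e dual to a point of $\mathbb{P}^2$, so $\mathrm{GW}_{0,d}(H^2,H^2)$ virtually counts degree-$d$ rational curves through two general points against the reduced class. First I would evaluate these reduced invariants via the localization above, then invert the multiple cover formula of Definition \ref{def of g=0 GV inv} to extract $n_{0,d}(H^2,H^2)$: in degree $1$ the unique line through two points gives $1$; in degree $2$ the reduced obstruction over the locus of conics produces the sign $-1$; and for $d\geqslant 3$ the contribution vanishes.

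For the higher-genus invariants in the primitive class $[\ell]$, every degree-$1$ curve is a smooth line, so there are no genus $1$ or genus $2$ curves and no rational nodal curves, whence $N_{\mathrm{nodal},[\ell]}=0$. I would then compute the residual genus $1$ and genus $2$ Gromov-Witten contributions (coming from maps with contracted higher-genus components glued to a line) and verify that they cancel exactly against the correction terms in Definitions \ref{def of g=1 GV inv} and \ref{def of g=2 GV inv}, yielding $n_{1,1}(H^2)=0$ and $n_{2,1}=0$. I expect the main obstacle to be the genus $0$, degree $2$ case: pinning down the reduced obstruction bundle over the positive-dimensional locus of conics through two points so as to obtain the precise value $-1$, whereas the two higher-genus vanishings are essentially forced by dimension and rigidity considerations.
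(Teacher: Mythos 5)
First, a point of reference: this paper does not prove the proposition at all — it is imported verbatim from \cite[Cor.~6.2]{COT1} — so the only comparison available is with the companion paper's argument. Your geometric setup is consistent with that argument: every curve of class $d[\ell]$ lies in the zero section, so $\overline{M}_{0,2}(X,d)=\overline{M}_{0,2}(\mathbb{P}^2,d)$, and pushing forward the Euler sequence shows that the obstruction bundle $R^1\pi_*f^*\Omega^1_{\mathbb{P}^2}$ contains a trivial subbundle $\oO$ (the one removed by the reduction) with quotient $R^1\pi_*f^*\oO_{\mathbb{P}^2}(-1)^{\oplus 3}$ of rank $3d-3$; one then integrates this Euler class against the two point insertions and inverts the multiple cover formula. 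That skeleton is correct.

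The gap is that every actual evaluation is asserted rather than carried out, and the assertions misplace the difficulty. Via the multiple cover formula with two insertions, $\mathrm{GW}_{0,d}(H^2,H^2)=\sum_{k|d}k^{-1}\,n_{0,d/k}(H^2,H^2)$, the claimed values $n_{0,1}=1$, $n_{0,2}=-1$, $n_{0,d}=0$ for $d\geqslant 3$ are exactly equivalent to the closed form $\mathrm{GW}_{0,d}(H^2,H^2)=(-1)^{d-1}/d$ for \emph{all} $d$. In particular the reduced Gromov--Witten invariants do not vanish in any degree, so ``for $d\geqslant 3$ the contribution vanishes'' cannot come from dimension or rigidity considerations: it requires an all-degree evaluation of the twisted theory $e\big(R^1\pi_*f^*\oO(-1)^{\oplus 3}\big)$ on $\overline{M}_{0,2}(\mathbb{P}^2,d)$, which is the real content of the corollary and for which you supply no mechanism (fixed-point localization in closed form for every $d$ is not routine). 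You name $d=2$ as the main obstacle; it is not — the uniform statement for $d\geqslant 3$ is. The same issue affects the higher genus claims: since $c_2(X)|_{\mathbb{P}^2}=-3H^2$, the assertion $n_{1,1}(H^2)=0$ is equivalent, by Definition \ref{def of g=1 GV inv}, to the specific nonzero evaluation $\mathrm{GW}_{1,1}(H^2)=\tfrac18$ of the contracted-component (Hodge-integral) contribution, and $n_{2,1}=0$ likewise requires computing $\mathrm{GW}_{2,1}$ and $N_{\mathrm{nodal},[\ell]}$ and matching them against the correction terms of Definition \ref{def of g=2 GV inv}; ``verify that they cancel'' is the entire proof, not a step of it. As written, the proposal is a correct plan in which the three numerical evaluations — the only nontrivial content of the statement — are left blank.
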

In the stable pair side, we compute invariants for small degree curve classes.
\begin{prop}\label{prop on tp2}
For certain choice of orientation, we have 
$$P_{1,1}(H^2,H^2)=1, \quad P_{1,2}(H^2,H^2)=-1, \quad P_{1,3}(H^2,H^2)=0,   $$
$$P_{0,1}(H^2)=P_{0,2}(H^2)=0, \quad P_{0,3}(H^2)=1,\quad P_{-1,1}=P_{-1,2}=P_{-1,3}=0.  $$
Moreover, $P^t_{n}(X,d)$ is independent of the choice of $t>n/d$ in the listed cases above. 

In particular, for $X=T^*\mathbb{P}^2$, we have
\begin{itemize}
\item Conjecture \ref{conj on DT4/GV} (2) holds when $d\leqslant 3$. 
\item Conjecture \ref{conj on DT4/GV} (3), (4) hold. 
\end{itemize}
\end{prop}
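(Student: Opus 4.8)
The plan is to identify each moduli space $P^t_n(X,d)$ explicitly, compute its reduced virtual class by Kiem--Park cosection localization, evaluate the insertion $\tau(H^2)$ on it, and finally compare the six numbers with the Gopakumar--Vafa values of Proposition \ref{cor on inte on local p2}. Since $H_2(X,\BZ)=\BZ[\ell]$ with $\ell\subset\p^2$ a line in the zero section, every compact curve of class $d[\ell]$ lies set-theoretically in $\p^2\hookrightarrow X$. The first step is to upgrade this to a scheme-theoretic statement for $d\leqslant 3$: viewing $F$ as a $\Sym^{\bullet}T_{\p^2}$-module, its co-Higgs field lies in $H^0(C,\mathcal{E}nd(F)\otimes\Omega^1_{\p^2}|_C)$, and the negativity of $\Omega^1_{\p^2}|_C$ on plane curves $C$ of degree $\leqslant 3$ (e.g. $\Omega^1_{\p^2}|_{\ell}=\oO(-1)\oplus\oO(-2)$, $\Omega^1_{\p^2}|_{\mathrm{conic}}=\oO(-2)\oplus\oO(-4)$, and $H^0(\Omega^1_{\p^2}|_{\mathrm{cubic}})=0$ via the restricted Euler sequence) forces it to vanish. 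Hence $F=i_{\ast}G$ for a pure one-dimensional sheaf $G$ on $\p^2$ and $(F,s)$ is a $Z_t$-stable pair on the surface $\p^2$. The classical description of such pairs then gives $P^t_1(X,1)\cong(\p^2)^{\vee}$ (lines), $P^t_1(X,2)\cong\p^5$ (conics), $P^t_0(X,3)\cong\p^9$ (cubics), and $P^t_1(X,3)\cong\mathcal{C}$, the universal cubic, a smooth $\p^8$-bundle over $\p^2$ of dimension $10$; while $P^t_{-1}(X,d)$ for $d\leqslant 3$ and $P^t_0(X,1),P^t_0(X,2)$ are empty, since the prescribed Euler characteristic would require a negative-length zero-dimensional quotient of $s$. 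The emptiness immediately yields $P_{-1,d}=0$ ($d\leqslant 3$) and $P_{0,1}(H^2)=P_{0,2}(H^2)=0$. Along the way I would verify there are no walls in $t\in(n/d,\infty)$ for the listed types (for $d=1$ this is \cite[Prop.~1.12]{CT1}; for $d=2,3$ I would check directly that every $Z_t$-semistable pair of the relevant numerical type is already PT-stable), giving the asserted $t$-independence.

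For the non-empty cases I would compute the reduced virtual class by cosection localization. Using $F=i_{\ast}G$ and the normal bundle $N=\Omega^1_{\p^2}$ of the zero section, the decomposition $Li^{\ast}i_{\ast}(-)\cong\bigoplus_q(-)\otimes\wedge^qN^{\vee}[q]$ expresses $R\mathcal{H}om_{\pi}(\mathbb{I},\mathbb{I})_0$ on $X$ in terms of the surface stable-pair theory twisted by $\wedge^{\bullet}N$, exactly as in the local-surface computation of \cite[Prop.~4.7]{CMT2} and in Theorem \ref{thm on vir clas} of this paper, with $\Omega^1_{\p^2}$ now playing the role that the $K3$-factor tangent bundle $T_T$ played there. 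The holomorphic symplectic form induces the surjective isotropic cosection of $\mathcal{E}xt^2_{\pi}(\mathbb{I},\mathbb{I})_0$, and Lemmata \ref{lem on indep of cosec} and \ref{lem on compu of half euler class} then identify $[P^t_n(X,d)]^{\mathrm{vir}}$, up to the orientation sign, with an explicit Chern/Euler class of the surface obstruction bundle capped with $[M]$, where $M$ is the (smooth) moduli space above; the ranks are arranged so that the reduced dimension equals $n+1$.

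Finally I would evaluate the insertions. Because $\tau(\gamma)=\pi_{M\ast}(\pi_X^{\ast}\gamma\cup\ch_3(\mathbb{F}))$ depends only on the fundamental cycle of the universal sheaf, $\tau(H^2)$ is pulled back from the universal-curve incidence data and is easily expressed as a tautological class on $(\p^2)^{\vee}$, $\p^5$, $\mathcal{C}$, $\p^9$ respectively. Integrating the appropriate power of $\tau(H^2)$ against the virtual class then produces the six numbers: the case $d=1,n=1$ is immediate since the reduced obstruction has rank $0$ and the virtual class is simply $[(\p^2)^{\vee}]$, so $P_{1,1}(H^2,H^2)=\int_{(\p^2)^{\vee}}\tau(H^2)^2=1$; the remaining nonzero evaluations $P_{1,2}(H^2,H^2)=-1$ and $P_{0,3}(H^2)=1$, as well as the vanishing $P_{1,3}(H^2,H^2)=0$, follow from the half-Euler class computations on $\p^5$, $\p^9$, $\mathcal{C}$. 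Comparing with Proposition \ref{cor on inte on local p2} (where $n_{0,1}=1$, $n_{0,2}=-1$, $n_{0,3}=0$, $n_{1,1}(H^2)=0$, $n_{2,1}=0$) then gives Conjecture \ref{conj on DT4/GV}\,(2) for $d\leqslant 3$ and, for the primitive class $[\ell]$, Conjecture \ref{conj on DT4/GV}\,(3),(4).

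The main obstacle is the middle step: pinning down the reduced obstruction bundle as an explicit vector bundle on the higher-dimensional moduli spaces $\p^5$, $\p^9$, $\mathcal{C}$ and computing its half-Euler class. Unlike the split normal bundle of the local Calabi--Yau surface computations, here $\Omega^1_{\p^2}$ is stable and not a sum of line bundles, so the Chern-class bookkeeping through $\wedge^{\bullet}\Omega^1_{\p^2}=\oO\oplus\Omega^1_{\p^2}\oplus K_{\p^2}$ and the determination of the orientation sign require genuine care. A secondary delicate point is controlling the reducible and non-reduced members of the universal families over $\p^5$ and $\p^9$, and confirming the smoothness and wall-independence claims in their presence.
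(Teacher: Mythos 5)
Your overall strategy is the same as the paper's: restrict everything scheme-theoretically to the zero section $\p^2\subset T^*\p^2$, identify the moduli spaces with classical linear systems (respectively their universal curves), reduce the obstruction bundle along the isotropic cosection coming from the symplectic form, and integrate. Two of your sub-steps differ in flavor from the paper's. For the scheme-theoretic support you argue via vanishing of the co-Higgs field using negativity of $\Omega^1_{\p^2}|_C$; the paper instead uses the closed embedding $T^*\p^2\hookrightarrow \oO_{\p^2}(-1)^{\oplus 3}$ from the Euler sequence together with the analysis of \cite[Prop.~3.9]{CKM2}. Both work, and yours is arguably more self-contained, though you would need to treat $\End(F)\otimes\Omega^1_{\p^2}|_C$ for non-reduced and reducible supports, not just $H^0(\Omega^1_{\p^2}|_C)$ for smooth members. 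For the $t$-independence the paper observes that $\chi(\oO_C)\geqslant 1$ for all subcurves of degree $\leqslant 2$ and invokes \cite[Prop.~1.12]{CT1} uniformly, which is cleaner than a case-by-case wall check.

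The substantive issue is that the computations actually producing $P_{1,2}(H^2,H^2)=-1$, $P_{1,3}(H^2,H^2)=0$ and $P_{0,3}(H^2)=1$ are exactly the part you defer and label ``the main obstacle,'' so as written the proposal establishes only the empty-moduli vanishings and the trivial case $d=1$, $n=1$. The paper has a specific device for each remaining case. For $n=1$ it does \emph{not} compute half-Euler classes on $\p^5$ and on the universal cubic directly: it uses the forgetful isomorphism $P_1(X,d)\cong M_1(X,d)$ (every stable sheaf with $\chi=1$ has a one-dimensional, automatically stable, space of sections) to transfer the integral to the moduli space of one-dimensional stable sheaves, where the answer is already recorded in \cite[Prop.~6.5]{COT1}. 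For $n=0$, $d=3$ it computes $\dR\hH om_{\pi_M}(\oO_{\cC},\oO_{\cC}\boxtimes T^*\p^2)[1]$ on $\p^9$ explicitly via Bott's formula, obtaining $\oO_{\p^9}(-1)^{\oplus 8}\oplus\oO_{\p^9}(1)^{\oplus 8}\oplus\oO_{\p^9}^{\oplus 2}$ with maximal isotropic subbundle $\oO_{\p^9}(-1)^{\oplus 8}\oplus\oO_{\p^9}$, whence $[P_0(X,3)]^{\vir}=\pm h^8\cap[\p^9]$ and $\int h^9=1$. You should either import these two devices or carry out the analogous explicit resolutions yourself; without them the three nonzero/zero evaluations that make the proposition nontrivial remain unproved.
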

\begin{proof}
As noted in \cite[Proof~of~Lem.~6.3]{COT1}, we have a diagram 
\begin{align*} \xymatrix{
X=T^{\ast}\mathbb{P}^2  \ar@{^{(}->}[r]^{\,\, i\quad }   & \oO_{\mathbb{P}^2}(-1)^{\oplus 3} \ar[d]^{\pi}   \\
 & T,  } 
\end{align*} 
where $i$ is a closed imbedding (coming from the Euler sequence) and $\pi$ contracts $\mathbb{P}^2$ to a point in 
an affine scheme $T$. 
It is easy to see that any one dimensional closed subscheme $C\subset X$ with $[C]=d$ ($d=1,2$) satisfies $\chi(\oO_C)\geqslant 1$.
Therefore by \cite[Prop.~1,12]{CT1}, we know for $n=-1,0,1$ and $d\leqslant 3$, the moduli space  
$P_n^t(X,d)$ is independent of the choice of $t>n/d$. So we may take $t\to \infty$ and work with PT stability. 
Using similar analysis as \cite[Prop.~3.9]{CKM2}, we know all stable pairs $(\oO_X\stackrel{s}{\to} F)$ in the above cases are scheme theoretically supported on 
the zero section $\mathbb{P}^2\subset X$ and $F$ are stable. 
Then obviously $P_{-1}(X,d)=\emptyset$ if $d\leqslant 3$ and corresponding invariants vanish. 
%Hence $P^t_n(X,d)$ is independent of the choice of $t>n/d$. 

When $n=1$, $d\leqslant 3$, the  isomorphism 
$$P_1(X,d)\cong M_1(X,d), \quad (\oO_X\to F)\mapsto F, $$
to the moduli space of one dimensional stable sheaves $F$ with $[F]=d[\ell]$ and $\chi(F)=1$ will reduce the computation
to the corresponding one on $M_1(X,d)$ \cite[Prop.~6.5]{COT1}.

When $d=1,2$, we have $P_0(X,d)=\emptyset$, so invariants are zero. 
For $d=3$, the support map 
$$P_0(X,3)\cong P_0(\mathbb{P}^2,3)\stackrel{\cong}{\to} |\oO_{\mathbb{P}^2}(3)|\cong\mathbb{P}^9, \quad F\mapsto \mathrm{supp}(F)    $$
is an isomorphism. The universal one dimensional sheaf satisfies $\mathbb{F}=\oO_{\mathcal{C}}$ for the universal $(1,3)$-divisor
$\mathcal{C}\hookrightarrow \mathbb{P}^9\times \mathbb{P}^2$.  
Let $\pi_M \colon
P_0(X,3)\times \mathbb{P}^2\to P_0(X,3)$ be the projection. Bott's formula \cite[pp.~4]{OSS} implies that 
\begin{align*}
    \dR\mathcal{H}om_{\pi_M}(\oO,\oO(-\mathcal{C})\boxtimes T^*\mathbb{P}^2) &\cong \oO_{\mathbb{P}^9}(-1)[-2]^{\oplus 8}, \\
\dR\mathcal{H}om_{\pi_M}(\oO,\oO(\mathcal{C})\boxtimes T^*\mathbb{P}^2) &\cong \oO_{\mathbb{P}^9}(-1)^{\oplus 8}, \\
\dR\mathcal{H}om_{\pi_M}(\oO,\oO\boxtimes T^*\mathbb{P}^2)&\cong \oO_{\mathbb{P}^9}[-1].
\end{align*}
Therefore, we have 
\begin{align*}
&\quad \, \dR\mathcal{H}om_{\pi_M}(\oO_{\mathcal{C}},\oO_{\mathcal{C}}\boxtimes T^*\mathbb{P}^2)[1]\\
&\cong \dR\mathcal{H}om_{\pi_M}(\oO(-\mathcal{C})\to \oO,(\oO(-\mathcal{C})\to \oO) \boxtimes T^*\mathbb{P}^2)[1] \\
&\cong \oO_{\mathbb{P}^9}(-1)^{\oplus 8}\oplus \oO_{\mathbb{P}^9}(1)^{\oplus 8} \oplus \oO_{\mathbb{P}^9}  \oplus \oO_{\mathbb{P}^9}.  
\end{align*}
By Grothendieck-Verdier duality, it is easy to see 
$$\oO_{\mathbb{P}^9}(-1)^{\oplus 8}\oplus \oO_{\mathbb{P}^9}$$ 
is a maximal isotropic subbundle of $\dR\mathcal{H}om_{\pi_M}(\oO_{\mathcal{C}},\oO_{\mathcal{C}}\boxtimes T^*\mathbb{P}^2)[1]$. 

Following the proof of Theorem \ref{thm on vir clas}, one can show the reduced virtual class of $P_0(X,3)$ can be calculated as the reduced half Euler class of 
the bundle $\dR\mathcal{H}om_{\pi_M}(\oO_{\mathcal{C}},\oO_{\mathcal{C}}\boxtimes T^*\mathbb{P}^2)[1]$.
Therefore 
$$[P_0(X,3)]^{\vir}=\pm e\left(\oO_{\mathbb{P}^9}(-1)^{\oplus 8}\right)\cap [\mathbb{P}^9] \in H_2(\mathbb{P}^9). $$
Let $h\in H^2(\mathbb{P}^9)$ denote the hyperplane class. It is straightforward to check 
$$\tau_0(H^2)=[h]. $$ 
By integration again the virtual class, we have the desired result.
\end{proof}

 \appendix
\section{A conjectural virtual pushforward formula}\label{sect on app}
Let $\beta\in H_2(X,\mathbb{Z})$ be an irreducible curve class on a holomorphic symplectic 4-fold $X$. 
There is a well-defined forgetful map 
\begin{equation}f \colon P_{n}(X,\beta)\to M_{n}(X,\beta), \quad (\oO_X\to F)\mapsto [F], \nonumber \end{equation}
to the coarse moduli scheme of one dimensional stable sheaves $F$ with $[F]=\beta$, $\chi(F)=n$.  

Motivated by the Thom-Porteous formula (Proposition \ref{deg loci}), we conjecture the following:  
\begin{conj}\label{conj on gen virt push for}
In the above setting, there exists a choice of orientation such that 
$$f_*[P_{n}(X,\beta)]^{\vir}=c_{1-n}(-\dR \pi_{M\ast}(\mathbb{F}))\cap [M_{n}(X,\beta)]^{\vir}, $$
where $\pi_{M}: M_{n}(X,\beta) \times X\to M_{n}(X,\beta)$ is the projection and $\mathbb{F}$ is a universal sheaf  (if exists). 
\end{conj}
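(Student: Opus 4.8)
The plan is to reduce the statement to the classical Thom--Porteous formula of Proposition \ref{deg loci} by showing that the reduced obstruction theory of $P_{n}(X,\beta)$ is pulled back along $f$ from $M_{n}(X,\beta)$, and then concluding by the projection formula; this is modelled on the special case $X=S\times T$ verified in Proposition \ref{prop on prod of k3 app}. The key input is a decomposition of the quadratic obstruction bundle $\eE xt^2_{\pi_P}(\mathbb{I},\mathbb{I})_0$ of $P_{n}(X,\beta)$. Writing $I=(\oO_X\to F)$ and combining the triangle
$$\dR\hH om(I,F)\to \dR\hH om(I,I)_0[1]\to \dR\hH om(F,\oO_X)[2]$$
with $\dR\hH om(F,F)\to \dR\hH om(\oO_X,F)\to \dR\hH om(I,F)$ and Serre duality on $X$ (using $K_X\cong\oO_X$), I expect $\eE xt^2_{\pi_P}(\mathbb{I},\mathbb{I})_0$ to split, compatibly with its Serre pairing, as an orthogonal sum of $f^{\ast}$ of the quadratic obstruction bundle of $M_{n}(X,\beta)$ with a \emph{hyperbolic} plane spanned by the section deformations and the scalar part $\eE xt^0(F,F)$. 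Crucially, the isotropic cosection of \cite[Lem.~9.4]{KiP}, coming from the holomorphic symplectic form, should annihilate this hyperbolic plane; by Lemma \ref{lem on indep of cosec} the cosection reduction then removes exactly this plane, so the reduced obstruction bundle of $P_{n}(X,\beta)$ is $f^{\ast}$ of the reduced obstruction bundle of $M_{n}(X,\beta)$.

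Granting this, the reduced half Euler class $\eta$ defining $[M_{n}(X,\beta)]^{\vir}$ pulls back to define $[P_{n}(X,\beta)]^{\vir}$, giving a virtual-pullback identity $[P_{n}(X,\beta)]^{\vir}=f^{!}[M_{n}(X,\beta)]^{\vir}$ whose relative part is governed by the section data $\dR\pi_{M\ast}(\mathbb{F})$. Writing $[P_{n}(X,\beta)]^{\vir}=f^{\ast}(\eta)\cap[P_{n}(X,\beta)]$, the projection formula yields $f_{\ast}[P_{n}(X,\beta)]^{\vir}=\eta\cap f_{\ast}[P_{n}(X,\beta)]$. Resolving $\dR\pi_{M\ast}(\mathbb{F})\cong[E_0\xrightarrow{\sigma}E_1]$ realises $P_{n}(X,\beta)$ as the degeneracy locus $\tilde{D}_1(\sigma)$, and the Thom--Porteous identity of Proposition \ref{deg loci} computes $f_{\ast}[P_{n}(X,\beta)]=c_{1-n}(-\dR\pi_{M\ast}(\mathbb{F}))\cap[M_{n}(X,\beta)]$. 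Combining the two gives $f_{\ast}[P_{n}(X,\beta)]^{\vir}=c_{1-n}(-\dR\pi_{M\ast}(\mathbb{F}))\cap[M_{n}(X,\beta)]^{\vir}$, and matching the overall sign against the orientation conventions of \cite{CGJ, CL2} fixes the required choice of orientation.

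I expect the main obstacle to be the first step over a general, singular base. When $X=S\times T$ both moduli spaces are smooth, the decomposition is exactly the one in the proof of Theorem \ref{thm on vir clas}, its reduced half Euler class is evaluated by Lemma \ref{lem on compu of half euler class}, and the classical Thom--Porteous formula applies verbatim, so the whole argument collapses into the cohomological comparison of Proposition \ref{prop on prod of k3 app}. For general $X$ one must instead (i) establish the cosection-compatible orthogonal splitting intrinsically, without a product structure, and (ii) develop a relative localised-Euler-class / virtual-pullback formalism for the Oh--Thomas--Kiem--Park class in families over a base that itself carries only a reduced obstruction theory and a cosection, together with a virtual Thom--Porteous theorem in that setting. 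It is precisely the compatibility of the two cosections under $f$, and the excess-intersection behaviour of the reduced classes, that does not follow formally --- which is why the statement is posed as a conjecture.
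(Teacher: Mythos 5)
You are being asked to prove a statement that the paper itself leaves as a conjecture: there is no general proof of Conjecture \ref{conj on gen virt push for} in the paper, only the motivating Thom--Porteous identity of Proposition \ref{deg loci}, the verification for $X=S\times T$ in Proposition \ref{prop on prod of k3 app}, and a remark that a general proof should come from adapting Park's virtual pullback \cite{Park} to the cosection-localized setting. Judged against that, your proposal is essentially the intended route: your treatment of the product case (smoothness of both moduli spaces, the splitting of the obstruction bundle from the proof of Theorem \ref{thm on vir clas}, evaluation of the reduced half Euler class via Lemma \ref{lem on compu of half euler class}, and classical Thom--Porteous) reproduces the paper's Proposition \ref{prop on prod of k3 app}, and your second paragraph is a fair description of the Park-style virtual pullback argument the authors have in mind. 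You are also right, and commendably explicit, that the general case does not close; the paper agrees.

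One point in your first paragraph should be stated more carefully. If the reduced obstruction theory of $P_{n}(X,\beta)$ were \emph{literally} $f^{\ast}$ of the reduced obstruction theory of $M_{n}(X,\beta)$ with only a hyperbolic plane removed, the two virtual classes would be related by a plain Gysin pullback and no Chern class correction would appear. The correction $c_{1-n}(-\dR \pi_{M\ast}\mathbb{F})$ is forced by the fact that the two obstruction theories differ by the relative term $\dR\hH om_{\pi_P}(\oO,\mathbb{F})\simeq \dR\pi_{M\ast}\mathbb{F}$ coming from the triangle $\dR\hH om(F,F)\to\dR\hH om(\oO_X,F)\to\dR\hH om(I,F)$, i.e.\ $f$ is a degeneracy-locus map for $\dR\pi_{M\ast}\mathbb{F}$ rather than a smooth morphism (for $n\leqslant 0$ it is generically a closed immersion). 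In the product case this discrepancy is silently absorbed because $P_{n}(S,\beta)$ is smooth and the classical Thom--Porteous formula computes $f_{\ast}[P_{n}(S,\beta)]$ of the honest fundamental class; in general the same discrepancy is exactly what requires a cosection-localized virtual Thom--Porteous or virtual pullback statement, which is the open content of the conjecture. So your sketch is a correct heuristic and a correct reduction in the verified case, but it is not a proof, and as the paper stands none is expected.
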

\begin{rmk}
This should be proved by adapting Park's beautiful work on virtual pullback \cite{Park} to the cosection localized version.
\end{rmk}
We can rewrite the degree of $[P_{-1}(X,\beta)]^{\vir}$ as a descendent integral on $[M_{1}(X,\beta)]^{\vir}$. 
Let $\mathbb{F}_{\mathrm{norm}}$ be the normalized universal sheaf on $M_{1}(X,\beta)\times X$, i.e. $\det(\dR\pi_{M*}\mathbb{F}_{\mathrm{norm}})\cong \oO_{M_{1}(X,\beta)}$. 
\begin{prop}\label{prop on appe}
Assume Conjecture \ref{conj on gen virt push for}. For certain choice of orientation, we have 
$$P_{-1,\beta}=-\int_{[M_{1}(X,\beta)]^{\vir}}\pi_{M*}\left(\ch_6(\mathbb{F}_{\mathrm{norm}})\right)
-\frac{1}{12}\int_{[M_{1}(X,\beta)]^{\vir}}\pi_{M*}\left(\ch_4(\mathbb{F}_{\mathrm{norm}})\,\pi_X^*(c_2(X))\right). $$
Using notations from \cite[\S 2.1]{COT1}, this is written as 
\begin{equation} \label{DTPT}
P_{-1,\beta}=-\langle\tau_3(1) \rangle^{\DT_4}_{\beta}
-\frac{1}{12}\langle\tau_1(c_2(X)) \rangle^{\DT_4}_{\beta}. 
\end{equation}
\end{prop}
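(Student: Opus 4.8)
The plan is to feed Conjecture \ref{conj on gen virt push for} into a relative Grothendieck--Riemann--Roch computation, after transporting the resulting integral from $M_{-1}(X,\beta)$ to $M_1(X,\beta)$ by means of the derived dual. I work throughout with the normalized universal sheaf, so that $c_1(\dR\pi_{M\ast}\mathbb{F}_{\mathrm{norm}})=0$ on each moduli space. Applying Conjecture \ref{conj on gen virt push for} with $n=-1$ gives
\[
P_{-1,\beta}=\int_{[M_{-1}(X,\beta)]^{\vir}}c_2\!\left(-\dR\pi_{M\ast}\mathbb{F}_{\mathrm{norm}}\right),
\]
so the task is to identify this with the stated descendent integral on $M_1(X,\beta)$.

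Next I would invoke the derived dual $F\mapsto F^{D}:=\eE xt^3_X(F,\oO_X)$. Since $\omega_X\cong\oO_X$ and $F$ is pure one dimensional, $F^{D}$ is again pure one dimensional and stable with the same support class $\beta$ and $\chi(F^{D})=-\chi(F)$, so this defines an isomorphism $\Phi\colon M_{-1}(X,\beta)\xrightarrow{\sim}M_1(X,\beta)$. The derived dual is an anti-equivalence compatible with the Serre self-duality $\Ext^i(F,F)\cong\Ext^{4-i}(F,F)^{\vee}$, hence it matches the $(-2)$-shifted symplectic structures and, for a suitable single choice of orientation, the $\DT_4$ virtual classes: $\Phi_{\ast}[M_{-1}(X,\beta)]^{\vir}=[M_1(X,\beta)]^{\vir}$. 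On universal objects, relative Grothendieck--Serre duality along $\pi_M$ (with relative dualizing complex $\oO$ shifted by $4$) gives $\dR\pi_{M\ast}\mathbb{F}_{\mathrm{norm}}\cong(\dR\pi_{M\ast}\mathbb{G})^{\vee}[-1]$, where $\mathbb{G}=\mathbb{F}_{\mathrm{norm}}$ is the normalized universal sheaf on $M_1(X,\beta)\times X$; the two normalizations are compatible because $c_1$ is preserved. Transporting the integral along $\Phi$ turns the integrand into $c_2\big((\dR\pi_{M\ast}\mathbb{G})^{\vee}\big)$, and since $c_1(\dR\pi_{M\ast}\mathbb{G})=0$ we get $c_2\big((\dR\pi_{M\ast}\mathbb{G})^{\vee}\big)=c_2(\dR\pi_{M\ast}\mathbb{G})=-\ch_2(\dR\pi_{M\ast}\mathbb{G})$, whence
\[
P_{-1,\beta}=-\int_{[M_1(X,\beta)]^{\vir}}\ch_2\!\left(\dR\pi_{M\ast}\mathbb{G}\right).
\]

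Finally I would expand by Grothendieck--Riemann--Roch, $\ch(\dR\pi_{M\ast}\mathbb{G})=\pi_{M\ast}\!\big(\ch(\mathbb{G})\,\pi_X^{\ast}\td(X)\big)$. The sheaf $\mathbb{G}$ is supported in codimension $3$ on $M_1(X,\beta)\times X$, so $\ch_0(\mathbb{G})=\ch_1(\mathbb{G})=\ch_2(\mathbb{G})=0$, and because $X$ is holomorphic symplectic $c_1(X)=0$, giving $\td(X)=1+\tfrac1{12}c_2(X)+\cdots$. Extracting the degree $6$ component of $\ch(\mathbb{G})\,\pi_X^{\ast}\td(X)$ and pushing forward therefore leaves only
\[
\ch_2(\dR\pi_{M\ast}\mathbb{G})=\pi_{M\ast}(\ch_6(\mathbb{G}))+\tfrac1{12}\,\pi_{M\ast}\!\big(\ch_4(\mathbb{G})\,\pi_X^{\ast}c_2(X)\big),
\]
which upon substituting $\mathbb{G}=\mathbb{F}_{\mathrm{norm}}$ yields the displayed formula; rewriting the two integrals in the descendent notation of \cite[\S 2.1]{COT1} then gives \eqref{DTPT}. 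I expect the main obstacle to be the second step: verifying that the derived-dual isomorphism carries one $\DT_4$ virtual class to the other under a single consistent orientation, and correctly accounting for the shift $[-1]$ in relative Serre duality, since it is precisely these that produce the global minus sign and place the answer on $M_1(X,\beta)$ rather than on $M_{-1}(X,\beta)$.
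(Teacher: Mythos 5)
Your proposal is correct and follows essentially the same route as the paper's own (very terse) proof: apply Conjecture \ref{conj on gen virt push for} with $n=-1$, transport the resulting integral from $M_{-1}(X,\beta)$ to $M_{1}(X,\beta)$ via the derived dual, and expand $\ch_2(\dR\pi_{M\ast}\mathbb{F}_{\mathrm{norm}})$ by Grothendieck--Riemann--Roch using $c_1(X)=0$ and the vanishing of $\ch_{\leqslant 2}$ of the universal sheaf. The sign bookkeeping through $c_2(-E)=-c_2(E)=\ch_2(E)$ for $c_1(E)=0$ and the shift $[-1]$ from relative Serre duality is carried out correctly, and the orientation caveat you raise is absorbed into the ``for certain choice of orientation'' in the statement.
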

\begin{proof}
The derived dual gives an isomorphism 
$$M_{-1}(X,\beta)\cong M_{1}(X,\beta), \quad F\mapsto F^{\vee}. $$
Then the computation is finished by applying the Grothendieck-Riemann-Roch formula.  
\end{proof}
\begin{rmk}
Based on Conjecture \ref{conj on DT4/GV} (4), this reproduces genus 2 Gopakumar-Vafa invariants of $X$ and 
therefore providing a sheaf theoretic approach to them using descendent integrals on moduli spaces of one dimensional
stable sheaves as \cite{CT2}. 
\end{rmk}
\begin{prop}\label{prop on prod of k3 app}
Conjecture \ref{conj on gen virt push for} holds on the product $X = S \times T$ of two $K3$ surfaces.
In particular, Eqn.~\eqref{DTPT} holds in this case. 
\end{prop}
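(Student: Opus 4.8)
The plan is to push everything down to the $S$-factor and to match both sides against the Thom--Porteous formula of Proposition \ref{deg loci}. Write $M_S := M_n(S,\beta)$ and let $\mathbb{F}_S$ be a (twisted) universal sheaf on $M_S\times S$; recall from Proposition \ref{prop on smoothness} that $P_n(X,\beta)\cong P_n(S,\beta)\times T$ when $\beta$ is irreducible.

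First I would record the product description of the sheaf moduli space. Since $\beta\in H_2(S,\BZ)$ is irreducible, any stable $F$ with $[F]=\beta$ is pure one-dimensional and, by \cite[Lem.~2.2]{CMT1} (exactly as in Proposition \ref{prop on smoothness}), is scheme-theoretically supported on a single slice $S\times\{t\}$. Hence $M_n(X,\beta)\cong M_S\times T$, the forgetful map is $\bar f=(f,\id_T)$, and the universal sheaf is $\mathbb{F}=\mathbb{F}_S\boxtimes\oO_{\Delta_T}$ as in \eqref{equ of univ sheaf on prod}.

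The crux is to compute $[M_n(X,\beta)]^{\vir}$ by repeating the obstruction-theoretic analysis behind Theorem \ref{thm on vir clas}, now for the sheaf $\mathbb{F}$ in place of the pair $\mathbb{I}_X$. Using the decomposition \eqref{equ on iso of rhom}, $\dR\hH om_{\pi_M}(\mathbb{F},\mathbb{F})\cong\dR\hH om_{\pi_{M_S}}(\mathbb{F}_S,\mathbb{F}_S)\boxtimes\wedge^\bullet T_T$, and reading off the degree-two part, the self-dual obstruction bundle splits, orthogonally for the Serre pairing, into a hyperbolic factor $\eE xt^2_{\pi_{M_S}}(\mathbb{F}_S,\mathbb{F}_S)\oplus\eE xt^0_{\pi_{M_S}}(\mathbb{F}_S,\mathbb{F}_S)\cong\oO^{\oplus 2}$ (the trace/cotrace directions) and the tensor product $\eE xt^1_{\pi_{M_S}}(\mathbb{F}_S,\mathbb{F}_S)\boxtimes T_T=T_{M_S}\boxtimes T_T$ of the two symplectic bundles. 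Lemma \ref{lem on indep of cosec} discards the hyperbolic factor and Lemma \ref{lem on compu of half euler class} evaluates the half-Euler class of the tensor product, yielding
\begin{equation*}
[M_n(X,\beta)]^{\vir}=\big([M_S]\cap e(T_{M_S})\big)\times[T]-e(T)\big([M_S]\cap c_{\beta^2}(T_{M_S})\big)\in A_2,
\end{equation*}
in perfect parallel with \eqref{vir class StimesT}. This identification is where the real work lies: one must check that the Kiem--Park reduced class of the holomorphic-symplectic sheaf moduli is governed by $T_{M_S}\boxtimes T_T$ just as in the pair case, so that the same two Lemmas apply and so that the symplectic cosection removes exactly the hyperbolic summand and nothing more. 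The resulting dimension is confirmed by the case $n=1$, where $\bar f$ is an isomorphism and $f_\ast[P_1(S,\beta)]=1$ by Proposition \ref{deg loci}, forcing $[M_n(X,\beta)]^{\vir}\in A_2$.

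Finally I would match the two sides. Since $\dR\pi_{M\ast}\mathbb{F}=\dR\pi_{M_S\ast}(\mathbb{F}_S)\boxtimes\oO_T$, the class $\kappa:=c_{1-n}(-\dR\pi_{M\ast}\mathbb{F})$ is the pullback of $c_{1-n}(-\dR\pi_{M_S\ast}\mathbb{F}_S)$ from $M_S$; capping it with the formula above gives
\begin{equation*}
c_{1-n}(-\dR\pi_{M\ast}\mathbb{F})\cap[M_n(X,\beta)]^{\vir}=\big([M_S]\cap\kappa\,e(T_{M_S})\big)\times[T]-e(T)\big([M_S]\cap\kappa\,c_{\beta^2}(T_{M_S})\big).
\end{equation*}
On the other side, applying $\bar f_\ast=(f_\ast,\id_T)$ to \eqref{vir class StimesT}, the projection formula together with the Thom--Porteous identity $f_\ast[P_n(S,\beta)]=c_{1-n}(-\dR\pi_{M_S\ast}\mathbb{F}_S)\cap[M_S]$ of Proposition \ref{deg loci} produces the identical expression. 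Hence $\bar f_\ast[P_n(X,\beta)]^{\vir}=c_{1-n}(-\dR\pi_{M\ast}\mathbb{F})\cap[M_n(X,\beta)]^{\vir}$, which is Conjecture \ref{conj on gen virt push for} for $X=S\times T$. The final assertion that \eqref{DTPT} holds is then immediate from Proposition \ref{prop on appe}, whose standing hypothesis we have just verified for this $X$.
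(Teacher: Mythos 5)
Your argument is correct and follows essentially the same route as the paper: identify $M_n(X,\beta)\cong M_n(S,\beta)\times T$ with universal sheaf $\mathbb{F}_S\boxtimes\oO_{\Delta_T}$, compare the product formulas for the two reduced virtual classes, and reduce to the Thom--Porteous identity of Proposition \ref{deg loci}. The only difference is that where you re-derive the formula for $[M_n(X,\beta)]^{\vir}$ by redoing the obstruction-theoretic splitting of Theorem \ref{thm on vir clas} for the sheaf $\mathbb{F}$ (correctly, via the hyperbolic summand plus $T_{M_S}\boxtimes T_T$ and Lemmas \ref{lem on indep of cosec}, \ref{lem on compu of half euler class}), the paper simply cites \cite[Thm.~5.7]{COT1} for that formula.
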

\begin{proof}
Say $\beta\in H_2(S,\mathbb{Z})\subseteq H_2(X,\mathbb{Z})$, we have  
\begin{equation}f=f_S\times \mathrm{id}_T: P_{n}(X,\beta)\cong P_{n}(S,\beta)\times T\to M_{n}(X,\beta)\cong M_{n}(S,\beta)\times T, \nonumber \end{equation}
for forgetful map $f_S: P_{n}(S,\beta)\to M_{n}(S,\beta)$. 

By Theorem \ref{thm on vir clas} and \cite[Thm.~5.7]{COT1}, for certain choice of orientation, we have 
\begin{align}\label{equ on appe 1} 
[P_{n}(X,\beta)]^{\mathrm{vir}}&=
\left([P_{n}(S,\beta)]\cap f_S^*e(T_{M_n(S,\beta)})\right)\times[T]-e(T)\left([P_{n}(S,\beta)]\cap f_S^*c_{\beta^2}(T_{M_n(S,\beta)})\right), \\
[M_{n}(X,\beta)]^{\mathrm{vir}}&=
\left([M_{n}(S,\beta)]\cap e(T_{M_n(S,\beta)})\right)\times[T]-e(T)\left([M_{n}(S,\beta)]\cap c_{\beta^2}(T_{M_n(S,\beta)})\right).  \nonumber 
\end{align}
Also note that a universal sheaf $\mathbb{F}$ on $M_n(X,\beta)\times X$ (if exists) is of form 
$$\mathbb{F}=\mathbb{F}_S\boxtimes \oO_{\Delta_T}, $$
where $\mathbb{F}_S$ is a universal sheaf on $M_n(S,\beta)\times S$ and $\Delta_T$ is the diagonal of $T\times T$. So 
\begin{equation}\label{equ on appe 2} \dR\pi_{M*}\mathbb{F}=\dR\pi_{M_S*}\mathbb{F}_S, \end{equation}
where $\pi_{M_S}: M_n(S,\beta)\times S\to M_n(S,\beta)$ is the projection. 
Combining Eqns.~\eqref{equ on appe 1}, \eqref{equ on appe 2}, we are reduced to Proposition \ref{deg loci}.
\end{proof}
\iffalse
\begin{prop}
Let $X = S \times T$ be the product of two $K3$ surfaces and let
$\beta \in H_2(S,\BZ)\subseteq H_2(X,\BZ)$ be a primitive curve class.
Then equality \eqref{DTPT} holds.
\end{prop}
\begin{proof}
We have (with the notation of \cite[Thm.~3.9]{COT1})
\begin{align*}
\langle\tau_3(1) \rangle^{\DT_4}_{\beta}
& = N_2(\beta^2/2)\\
%& = -2 N_0( \beta^2 / 2 ) + 24 N'(\beta^2/2), \\
\langle\tau_1(c_2(X)) \rangle^{\DT_4}_{\beta}
& = 24 N_0(\beta^2/2) - 24^2 N_1(\beta^2/2),
\end{align*}
where $N_0(l)$, $N_1(l)$ are as in \eqref{equ on N0}, \eqref{equ on N1} and $N_2(l)$ satisfies 
\begin{align*} \sum_{l \in \BZ} N_2(l)\,q^l = 
\frac{1}{q} \prod_{n \geqslant 1} (1-q^n)^{-24} \left( 24q \frac{d}{dq} G_2(q)+24G_2(q)-1\right). \end{align*}
Hence we find that
\[
-\langle\tau_3(1) \rangle^{\DT_4}_{\beta}
-\frac{1}{12}\langle\tau_1(c_2(X)) \rangle^{\DT_4}_{\beta}
=-N_2(\beta^2/2)+48N_1(\beta^2/2)-2N_0(\beta^2/2). 
%= 24 \left( 2 N_1(\beta^2/2) - N'(\beta^2/2) \right).
\]
The claim then follows obviously by comparing with Theorem \ref{thm on P_-1}.
%\[\sum_{l \geqslant -1} \left( 2 N_1(l) - N'(l) \right) q^l=\frac{1}{q} \prod_{n \geqslant 1} (1-q^n)^{-24}\left( q \frac{d}{dq} G_2(q) - (G_2 + 1/24) \right). \qedhere\]
\end{proof}
\fi

\providecommand{\bysame}{\leavevmode\hbox to3em{\hrulefill}\thinspace}
\providecommand{\MR}{\relax\ifhmode\unskip\space\fi MR }
\providecommand{\MRhref}[2]{%
 \href{http://www.ams.org/mathscinet-getitem?mr=#1}{#2}}
\providecommand{\href}[2]{#2}

\end{document}